\newtheorem{theorem}{Theorem}[section]
\newtheorem{lemma}[theorem]{Lemma}
\newtheorem{proposition}[theorem]{Proposition}
\newtheorem{remark}[theorem]{Remark}
\newtheorem{assumption}[theorem]{Assumption}
\newtheorem*{claim}{Claim}
\begin{document}
    \title{On gradient stability in nonlinear PDE models \\ and inference in interacting particle systems}
    \date{}
    \author{Aur\'elien Castre and Richard Nickl\\ \\ University of Cambridge \\ ~ \\ \today}
    \maketitle

\begin{abstract}
We consider general parameter to solution maps $\theta \mapsto \mathcal G(\theta)$ of non-linear partial differential equations and describe an approach based on a Banach space version of the implicit function theorem to verify the gradient stability condition of \cite{NicklWang2024_polynomialTimeLangevin} for the underlying non-linear inverse problem, providing also injectivity estimates and corresponding statistical identifiability results. We illustrate our methods in two examples involving a non-linear reaction diffusion system as well as a McKean--Vlasov interacting particle model, both with periodic boundary conditions.  We apply our results to prove the polynomial time convergence of a Langevin-type algorithm sampling the posterior measure of the interaction potential arising from a discrete aggregate measurement of the interacting particle system.

\end{abstract}

\setcounter{tocdepth}{3}
\tableofcontents

\section{Introduction}

Many algorithms used in contemporary computational mathematics employ the gradient $\nabla H$ of a map $H: \mathbb R^D \to \mathbb R$ defined on the Euclidean space $\mathbb R^D$. This includes the tasks of minimising $H$ over its potentially high-dimensional ($D \gg 1$) domain by (stochastic) gradient descent, or of generating random samples drawn from the Gibbs-type probability measure
\begin{equation*}
    \mathop{}\!\mathrm{d} \mu(\theta) \propto e^{-H(\theta)} \mathop{}\!\mathrm{d}\theta, \quad \theta \in \mathbb{R}^{D},
\end{equation*}
by Langevin-type methods. In the sampling problem, implementations often rely on gradient based Markov Chain Monte Carlo (MCMC) such as the Metropolis-adjusted Langevin algorithm (MALA) \cite{Besag1994,RobertsTweedie1996,RobertsRosenthal1998}. These methods have proved popular in a wide array of applications -- too many to be listed here, but let us mention non-linear PDE type inverse problems and data assimilation  \cite{Stuart2010, CRSW13, RC15, GW25} as an instance that will be studied in the present article. Theoretical convergence guarantees for such algorithms have been extensively studied in recent years, using geometric properties of Markov processes developed in \cite{bakry2013analysis}.  Notably, paralleling the situation with convex optimisation \cite{BV04}, fast mixing of a Markov chain in high dimensions towards its invariant measure $\mu$ can be expected when \(H\) is convex \cite{dalalyan2017theoretical,DurmusMoulines2017,DurmusMoulines2019, rigetal}. It is also known that convexity -- or a related functional inequality satisfied by $\mu$, see e.g., \cite{CELSZ25} and references therein -- is essential for the polynomial mixing time of algorithms such as MALA in high dimensions, and that runtimes can scale exponentially in relevant parameters even for uni-modal target functions $H$ that are not globally convex, see \cite{BMNW23}.

\smallskip

In the context of statistical inference $-H$ is often a fit functional such as the (possibly penalised) log-likelihood function, and $\mu$ is the posterior measure arising from some prior on $\mathbb R^D$, see, e.g., \cite{Stuart2010, nickl2023bayesian}. The gradient $\nabla H$ then determines the Fisher information and thus the local average curvature near a `ground truth' parameter. In the recent contribution \cite{NicklWang2024_polynomialTimeLangevin} a related notion of \textit{gradient stability} was employed to establish high probability polynomial run-time  bounds for  Langevin type MCMC algorithms even when $H$ is not convex but $\mu$ is a posterior measure. Gradient stability has been verified in a variety of non-linear settings involving \textit{linear} PDEs \cite{NicklWang2024_polynomialTimeLangevin, nickl2023bayesian, BohrNickl2024, altmeyer2022polynomial}, but many complex models involve \textit{non-linear} PDEs such as the McKean--Vlasov or Navier-Stokes equations, and it is not generally clear how to check gradient stability there, or even how to derive expressions for the gradients themselves, as is relevant for implementation. 

The present article is a contribution to this circle of problems: we will describe a basic argument that allows to compute gradients of parameter-to-solution maps of non-linear PDEs based in essence on a Banach space version of the implicit function theorem. We illustrate how this methods works for two prototypical examples of non-linear PDEs introduced below, and use our findings to show that Bayesian inference algorithms for interacting particle models studied recently in \cite{mckv} are computable by warm start Langevin-type methods in high-dimensional settings. Along the way we  provide novel proof techniques to derive global injectivity estimates in non-linear inverse problems, in particular giving an alternative approach to establish some of the key analytical results for McKean--Vlasov systems in \cite{mckv}. Our ideas can further be related (e.g., via Theorem 15.3 in \cite{Duane1987}) to methods developed in \cite{DvKSZ23} that establish real analyticity and holomorphicity of  forward maps  appearing in inverse problems.

\subsection{General framework}\label{subsec:general-framework}

We focus our analysis on energies \(H\) of the form
\begin{equation}\label{eq:energy}
H(\theta) \coloneqq \frac{1}{2} \left( \sum_{i=1}^{N} | Y_i-\mathcal{G}(\theta)(X_i) |^{2} + \theta^{T} \Sigma^{-1} \theta \right)     
\end{equation} 
where \(\Sigma = \Sigma(N,D)\) is a $D \times D$ symmetric positive semi-definite matrix, \(\mathcal{G}\) some map from \(\mathbb{R}^{D}\) to the space $L^2(\mathcal X)$ of square-Lebesgue integrable functions defined on a bounded subset \(\mathcal X\) of the Euclidean space, and the \( (Y_i,X_i)_{i \leq N}\) are random variables valued in \mbox{\(\mathbb R \times \mathcal X\)}. We refer to \(\mathcal{G}\) as the `forward map', following the terminology of the inverse problems literature~\cite{Stuart2010}. Gibbs measures of this type arise naturally in Bayesian inference as posterior distributions with data coming from noisy point observations of \(\mathcal{G}(\theta)\) and a centred Gaussian prior with covariance matrix \(\Sigma\). More specifically, if we observe data \(Z_N=(Y_i,X_i)_{i \leq N}\) generated according to the model
\[
Y_i = \mathcal{G}(\theta)(X_i) + \varepsilon_i,, \quad \varepsilon_i \overset{\mathrm{iid}}{\sim} \mathcal{N}(0,1), ~X_i \overset{\mathrm{iid}}{\sim} \mathrm{Unif} (\mathcal X),
\]
and let \(\Pi\) be the distribution of the prior, we obtain the conditional `posterior' law
\begin{equation}\label{eq:posterior-intro}
\mathop{}\!\mathrm{d} \Pi (\theta | Z_N) \propto e^{\ell_N(\theta)} \mathop{}\!\mathrm{d} \Pi(\theta) = e^{-H(\theta)} \mathop{}\!\mathrm{d}\theta    
\end{equation}  
where \(\ell_N(\theta) = -\frac{1}{2} \sum_{i=1}^{N} | Y_i-\mathcal{G}(\theta)(X_i) |^{2} \) is the log-likelihood of the model. We denote by \(P_{\theta}^{N}\) the product measure describing the distribution of the data \mbox{\(Z_N\)} with parameter \(\theta\). We refer, e.g., to Chapter 1 in \cite{nickl2023bayesian} for a review of this standard framework. 

The notion of `gradient stability' discussed above involves identification of the gradient $\nabla \mathcal G_\theta$ of $\mathcal G$ at $\theta$ and the injectivity estimate 
\begin{equation}\label{gradstab}
\|\nabla \mathcal G_\theta(h)\|_{L^2(\mathcal X)} \gtrsim \|h\|_{\mathbb R^D}~~ \forall h \in \mathbb R^D,
\end{equation} see \cite[Sec.~3.1]{NicklWang2024_polynomialTimeLangevin} or \cite[Sec.~3.2] {nickl2023bayesian}. This requirement is closely related to the existence of the inverse Fisher information at $\theta$ in the underlying regression model. Gradients of forward maps $\mathcal G$ have been computed in a variety of examples involving non-linear parameter identification problems with \textit{linear} PDEs (such as Schr\"odinger equations \cite{N20, NicklWang2024_polynomialTimeLangevin},  Darcy's problem \cite{nickl2023bayesian, altmeyer2022polynomial}, non-Abelian $X$-ray transforms \cite{MNPCPAM, MonardNicklPaternain2021, BohrNickl2024}, diffusion problems \cite{GW25}) and also with initial value to solutions maps of non-linear PDEs such as reaction diffusion and Navier-Stokes equations (see \cite{bvm-time-evolution, KonenNickNSE}). These proofs however do not extend straightforwardly to the more difficult situation where one wishes to identify a general parameter of a \textit{non-linear} PDE, such as the interaction potential $W$ in a McKean--Vlasov equation (\ref{eq:mckean-vlasov-intro}) for example. The reason for the difficulty is that two non-linearities need to be dealt with simultaneously: one arising from the differential operator and one from the parameter to solution map. 

To explain our approach, we will carry out our analysis in general function spaces, viewing \(\theta\) as an element of a Banach space~\(\Theta\). The Euclidean space~\(\mathbb{R}^{D}\) can then be regarded as a linear subspace of~\(\Theta\) through an appropriate discretisation procedure (for instance, using a wavelet or Fourier basis/frame). Thus, given two Banach spaces \(X\) and \(Y\), we view \(\mathcal{G}\) as a map from \(\Theta\) to \(X\) and assume that there is an explicitly defined function $f$ of two variables, \(f:\Theta \times X \to Y\) such that $\mathcal G$ can be defined implicitly via
\begin{equation}\label{eq:fundamental-equation}
    f(\theta, \mathcal{G}(\theta)) = 0, \quad \theta \in \Theta.  
\end{equation} 
Such $f$ can accommodate a wide variety of problems --  we will illustrate the framework here with two non-linear parabolic PDE examples. We begin with a reaction-diffusion model in which the parameter \(\theta\) represents a Lipschitz reaction term \(R: \mathbb R \to \mathbb R\), and the nonlinear map \(\mathcal{G}(\theta)\eqqcolon u_R\) is defined as the unique solution $u_R=u$ to the nonlinear parabolic PDE
\begin{equation}\label{eq:reaction-diffusion-intro}
    \begin{cases}
     \partial_t u - \Delta u = R(u) &\text{on } (0,T) \times \mathbb{T}^{d} \\
     u(0) = \phi &\text{on } \mathbb{T}^{d}
\end{cases}
\end{equation} 
for some given initial condition \(\phi\) on the  torus \(\mathbb{T}^{d} \coloneqq \mathbb{R}^{d}/\mathbb{Z}^{d}\), and where $\Delta$ is the Laplacian. If we set $$f(R, u) = (\partial_t u - \Delta u - R(u), u(0)-\phi)$$ then the pairs $(R, u_R)$ are precisely the zeros of $f$ in (\ref{eq:fundamental-equation}) for appropriate choice of $\Theta, X,Y$. 

The second example is motivated by a surge of recent interest in statistical inference methods for interacting particle models, e.g., \cite{della2022nonparametric, amorino2024polynomial, pavliotis2025linearization, mckv}. Here one considers a non-linear PDE known as the McKean--Vlasov equation in which the parameter \(\theta\) represents an `interaction potential' denoted by \(W\), and \(\mathcal{G}(\theta)\eqqcolon\rho_W\) is defined as the unique solution $\rho_W=\rho$  to the non-linear and non-local Fokker-Planck equation (with $\nabla \cdot$ the divergence operator)
\begin{equation}\label{eq:mckean-vlasov-intro}
    \begin{cases}
\partial_t \rho = \Delta \rho + \nabla \cdot (\rho \nabla W \ast \rho) &\text{on \([0,T] \times \mathbb{T}^{d}\)}\\
     \rho(0)=\phi &\text{on \(\mathbb{T}^{d}\)}
\end{cases}
\end{equation} 
for some probability density \(\phi\) on \(\mathbb{T}^{d}\). Similar to above, we can take $$f(W, \rho) = (\partial_t \rho - \Delta \rho - \nabla \cdot (\rho \nabla W \ast \rho), \rho(0)-\phi)$$ in (\ref{eq:fundamental-equation}). The techniques we develop below extend well beyond the two examples discussed here and can, for instance, be applied to statistical inference problems occuring with Navier--Stokes equations \cite{NT24, KonenNickNSE}. 

A natural idea used commonly in PDE theory (e.g., Chapter III in \cite{AG07}) is to formally differentiate equation \eqref{eq:fundamental-equation} with respect to \(\theta\). This yields an identity
\begin{equation}\label{eq:derivative-identity}
D\mathcal{G}(\theta ) = -(D_{2}f(\theta, \mathcal{G}(\theta )))^{-1} \circ D_{1}f(\theta, \mathcal{G}(\theta ))   
\end{equation} 
that involves only partial derivatives of the map $f$ -- these can typically be computed without difficulty. Provided that \(f\) is continuously differentiable and that \(D_2f\) is a homeomorphism, the formula above can be made rigorous using the implicit function theorem in Banach spaces (e.g., \cite[Theorem 10.2.1]{dieudonne} and Theorem \ref{thm:abstract-derivative} below). Higher order differentiability of \(\mathcal{G}\) then directly follows from \eqref{eq:derivative-identity} and higher differentiability of \(f\). Let us summarise the main idea in the form of the following assertion. 
\begin{claim}
    Let \(f\) be such that \eqref{eq:fundamental-equation} holds. Assume moreoever that \(f\) is \(C^{k}\) in the Fréchet sense for some \(k \geq 1\), and that \(D_2f(\theta,\mathcal{G}(\theta)) : X \to Y\) is a homeomorphism for all \(\theta \in \Theta\). Finally, assume that for any \(\theta \in \Theta\), \(u=\mathcal{G}(\theta)\) is the unique solution in \(X\) to the equation \(f(\theta,u)=0\). Then, \(\mathcal{G}\) is \(C^{k}\) in the Fréchet sense and its derivative is given by \eqref{eq:derivative-identity}.    
\end{claim}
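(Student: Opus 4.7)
The plan is to reduce the claim to the standard Banach space implicit function theorem (IFT) applied locally at each point, and then use the global uniqueness assumption to glue the local solution maps together and identify them with $\mathcal{G}$.

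First I would fix an arbitrary $\theta_0 \in \Theta$ and set $u_0 = \mathcal{G}(\theta_0)$, so by \eqref{eq:fundamental-equation} we have $f(\theta_0, u_0) = 0$. Since $f$ is $C^k$ and $D_2 f(\theta_0, u_0) : X \to Y$ is a topological isomorphism (a homeomorphism that is also linear, hence with bounded inverse by the open mapping theorem), the Banach space IFT (e.g.\ \cite[Theorem 10.2.1]{dieudonne}, or Theorem~\ref{thm:abstract-derivative} below) produces open neighborhoods $U \subset \Theta$ of $\theta_0$ and $V \subset X$ of $u_0$ together with a unique $C^k$ map $g : U \to V$ such that $g(\theta_0) = u_0$ and $\{(\theta, u) \in U \times V : f(\theta,u) = 0\} = \{(\theta, g(\theta)) : \theta \in U\}$.

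Next I would use the global uniqueness hypothesis to upgrade this to the identification $g = \mathcal{G}$ on $U$. For each $\theta \in U$, the pair $(\theta, \mathcal{G}(\theta))$ solves $f(\theta, u) = 0$; on the other hand, $(\theta, g(\theta))$ also solves this equation and lies in $U \times V$. The standing uniqueness assumption on solutions of $f(\theta, \cdot) = 0$ in $X$ forces $\mathcal{G}(\theta) = g(\theta)$ for every $\theta \in U$. Hence $\mathcal{G}$ agrees with the $C^k$ map $g$ on the open neighborhood $U$ of $\theta_0$; since $\theta_0$ was arbitrary, $\mathcal{G} : \Theta \to X$ is $C^k$ in the Fréchet sense.

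Finally, to obtain the derivative formula I would differentiate the identity $f(\theta, \mathcal{G}(\theta)) = 0$ using the Fréchet chain rule, yielding
\begin{equation*}
D_1 f(\theta, \mathcal{G}(\theta)) + D_2 f(\theta, \mathcal{G}(\theta)) \circ D\mathcal{G}(\theta) = 0 \quad \text{in } \mathcal{L}(\Theta, Y).
\end{equation*}
Pre-composing with the bounded inverse $(D_2 f(\theta, \mathcal{G}(\theta)))^{-1}$, which exists by hypothesis, gives exactly \eqref{eq:derivative-identity}.

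There is no real obstacle in the argument: the analytic content sits entirely in the Banach space IFT, which is applied as a black box. The only subtle point is that the IFT by itself is purely local and only asserts uniqueness of solutions inside the small neighborhood $V$; global uniqueness in $X$ is precisely the extra ingredient that turns the local solution branch into the globally defined $\mathcal{G}$. One could equally well formulate the claim with only local uniqueness and obtain the same conclusion, but the global version is what will be directly usable for the reaction-diffusion and McKean--Vlasov examples, where well-posedness of \eqref{eq:reaction-diffusion-intro} and \eqref{eq:mckean-vlasov-intro} in the chosen space $X$ gives this uniqueness for free.
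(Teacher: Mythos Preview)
Your proposal is correct and follows essentially the same route as the paper's proof of Theorem~\ref{thm:abstract-derivative}: apply the Banach space implicit function theorem locally, use the global uniqueness hypothesis to identify the local branch with $\mathcal{G}$, and differentiate the identity $f(\theta,\mathcal{G}(\theta))=0$ via the chain rule to obtain \eqref{eq:derivative-identity}. The only cosmetic difference is that you invoke a $C^{k}$ version of the IFT directly, whereas the paper cites Dieudonn\'e's $C^{1}$ version and then bootstraps to $C^{k}$ by noting that the right-hand side of \eqref{eq:derivative-identity} is $C^{k-1}$ once $\mathcal{G}$ is $C^{k-1}$; both are standard and neither adds real content beyond the other.
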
 

We can heuristically conclude from this that verifying gradient stability (\ref{gradstab}) will entail an injectivity estimate for the map in (\ref{eq:derivative-identity}), which in turn reduces to injectivity of the partial derivative $D_1f$ of $f$. The injectivity of $D_2f$ on an appropriate domain is already an implicit requirement in the above formula for the derivative of $\mathcal G$. We also explain in Section \ref{stablin} how these ideas provide injectivity results and stability estimates for the non-linear map $\mathcal G$ via `pseudo-linearisation' identities commonly used in non-linear inverse problems \cite{MNPCPAM, PSU23}.

\subsection{Main results}

For the reaction diffusion model \eqref{eq:reaction-diffusion-intro}, we will show that the forward map \(R \mapsto u_R\) is continuously differentiable in the Fréchet sense, and that the linear map representing its derivative $Du_R$ at a point \(R\) can be computed by solving a linear parabolic PDE. This is summarised in the following result. A rigorous statement and its proof are provided in Theorem \ref{thm:linearisation-reaction-diffusion}.
\begin{claim}[Derivative of reaction-diffusion forward map] If \(d \leq 3\) and \(\phi \in H^{1}\), then for any \(R,H \in C^{2}_b(\mathbb R)\), the Fréchet derivative of \(u_R\) with respect to \(R\), \(Du_R[H] \coloneqq i_H =i\), is the unique solution to the linear parabolic PDE
         \[
    \begin{cases}
     \partial_t i - \Delta i - R'(u_R)i = H(u_R) &\text{on } (0,T) \times \mathbb{T}^{d} \\
     i(0) = 0 &\text{on } \mathbb{T}^{d}.
\end{cases}
         \]
\end{claim}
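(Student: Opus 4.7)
The plan is to verify the hypotheses of the abstract implicit function theorem (Theorem \ref{thm:abstract-derivative}) applied to the map $f(R,u) = (\partial_t u - \Delta u - R(u),\, u(0)-\phi)$ on appropriately chosen Banach spaces. A natural choice is $\Theta = C^2_b(\mathbb R)$, the parabolic energy space
\[
X = L^2(0,T; H^2(\mathbb T^d)) \cap H^1(0,T; L^2(\mathbb T^d)),
\]
which embeds continuously into $C([0,T]; H^1(\mathbb T^d))$, and $Y = L^2(0,T; L^2(\mathbb T^d)) \times H^1(\mathbb T^d)$. Existence and uniqueness of $u_R \in X$ solving the nonlinear problem for $R$ Lipschitz and $\phi \in H^1$ is a classical result in parabolic PDE theory that I would quote.

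First I would check that $f : \Theta \times X \to Y$ is of class $C^1$ (actually $C^2$, since $R \in C^2_b$). The linear pieces $u \mapsto (\partial_t u - \Delta u,\, u(0))$ are bounded from $X$ to $Y$ by definition of $X$. The only nontrivial part is the Nemytskii composition $(R,u) \mapsto R(u)$. Here the dimension hypothesis $d \leq 3$ enters crucially: the Sobolev embedding $H^2(\mathbb T^d) \hookrightarrow L^\infty(\mathbb T^d)$ (valid since $2 > d/2$) ensures that $u \in X$ implies $u \in L^2(0,T;L^\infty)$, so that $R(u), R'(u), R''(u)$ are in $L^\infty((0,T)\times \mathbb T^d)$ and a Taylor expansion with dominated convergence gives Fréchet differentiability with derivatives
\[
D_1 f(R,u_R)[H] = (-H(u_R),\, 0), \qquad D_2 f(R,u_R)[v] = (\partial_t v - \Delta v - R'(u_R)v,\, v(0)).
\]

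The main analytical step — and the place where I expect the real work to sit — is showing that $D_2 f(R, u_R) : X \to Y$ is a linear homeomorphism. This amounts to well-posedness in $X$ of the linear variable-coefficient parabolic Cauchy problem
\[
\partial_t v - \Delta v - R'(u_R)\, v = g, \qquad v(0) = v_0,
\]
for arbitrary $(g,v_0) \in Y$. Since $R' \in C_b(\mathbb R)$, the potential $R'(u_R)$ is a bounded function on $(0,T) \times \mathbb T^d$, so standard Galerkin approximation plus energy estimates (testing against $v$ and $-\Delta v$ and invoking Gronwall's inequality to absorb the zero-order term) yield existence, uniqueness, and a bound $\|v\|_X \lesssim \|g\|_{L^2L^2} + \|v_0\|_{H^1}$. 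By the open mapping theorem this inverse is bounded, so $D_2 f(R,u_R)$ is a homeomorphism.

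Having verified the hypotheses, Theorem \ref{thm:abstract-derivative} yields that $R \mapsto u_R$ is $C^1$ Fréchet with derivative
\[
Du_R[H] = -(D_2 f(R,u_R))^{-1} D_1 f(R,u_R)[H] = (D_2 f(R,u_R))^{-1}(H(u_R),\, 0),
\]
which by the computation of $D_2 f$ above is precisely the unique $i \in X$ solving $\partial_t i - \Delta i - R'(u_R) i = H(u_R)$ with $i(0)=0$. Uniqueness of this $i$ in the stated linear PDE follows from the injectivity of $D_2 f(R,u_R)$ established in the homeomorphism step.
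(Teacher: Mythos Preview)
Your proposal is correct and follows essentially the same route as the paper: the same choice of Banach spaces $E=C^2_b(\mathbb R)$, $F=L^2_TH^2\cap H^1_TL^2$, $G=L^2_TL^2\times H^1$, the same decomposition of $f$ into its linear part and the Nemytskii term $(R,u)\mapsto R(u)$, the same use of the Sobolev embedding $H^2\hookrightarrow L^\infty$ for $d\le 3$ to control the composition, and the same appeal to standard linear parabolic theory (bounded zero-order potential $R'(u_R)$) to show $D_2f$ is a homeomorphism before invoking Theorem~\ref{thm:abstract-derivative}. The paper's proof is somewhat more detailed in verifying joint continuity of $D_2 h(R,u)[s]=R'(u)s$, but your outline covers all the essential points.
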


We now address the problem of computing the derivative of \(\mathcal{G}\) in the McKean--Vlasov model defined in~\eqref{eq:mckean-vlasov-intro}. Our general results will provide a  Fr\'echet derivative in function space (see Theorem \ref{thm:linearisation-mckean-vlasov}), but for the applications to sampling from high-dimensional Gibbs measures (\ref{eq:energy}) by gradient-based algorithms, let us first focus on the case where $W$ is restricted to lie in a $D$-dimensional approximation space of $\Theta$. Consider a trigonometric basis of \(L^2\) (see Section~\ref{subsec:approximation-spaces}), and for \(D \geq 2\) even, identify each vector in \(\mathbb{R}^{D}\) with the corresponding mean-zero function defined by its first $D$ Fourier coefficients. In this way, \(\rho_W\) is well-defined for \(W \in \mathbb{R}^{D}\). For any fixed \((t,x) \in [0,T] \times \mathbb{T}^{d}\), define the map \(\mathcal{G}^{t,x}\) by \(\mathcal{G}^{t,x}(W) \coloneqq \rho_W(t,x)\), \(W \in \mathbb{R}^{D}\). Then, the gradient of \(\mathcal{G}^{t,x}\) at a point \(W \in \mathbb{R}^{D}\) can be computed by solving a linear parabolic PDE, as stated in the following claim. Full details are provided in Theorem \ref{thm:linearisation-mckean-vlasov} and Lemma \ref{lem:discrete-derivatives-forward-map}.

\begin{claim}[Gradient of McKean--Vlasov forward map] Assume that the probability density \(\phi\) in \eqref{eq:mckean-vlasov-intro} is such that \(\phi \in H^{\beta}\) for some \(\beta \geq 3+d\). Then, for any \(H \in \mathbb{R}^{D}\) and \((t,x) \in [0,T] \times \mathbb{T}^{d}\), the gradient of \(\mathcal{G}^{t,x}\) with respect to \(W\) is given by \(H^{T}\nabla \mathcal{G}^{t,x} (W) = D \rho_W [H] (t,x)\), where \(D \rho_W [H] =u_H = u\) is the unique solution to the linear parabolic PDE
         \[
\begin{cases}
\partial_t u - \Delta u - \nabla \cdot (u \nabla W \ast \rho_W) - \nabla \cdot (\rho_W \nabla W \ast u) = \nabla \cdot (\rho_W \nabla H \ast \rho_W )&\text{on \([0,T] \times \mathbb{T}^{d}\)}\\
u(0)=0 &\text{on \(\mathbb{T}^{d}\)}.
\end{cases}
         \] 
\end{claim}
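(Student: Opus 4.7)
My plan is to invoke the abstract framework of the Claim preceding the statement with $f: \Theta \times X \to Y$ defined by $f(W, \rho) = (\partial_t \rho - \Delta \rho - \nabla \cdot (\rho \nabla W \ast \rho),\, \rho(0) - \phi)$, verify the hypotheses of the Banach-space implicit function theorem around $(W, \rho_W)$, and then read off the derivative from the abstract formula $D\mathcal{G}(W) = -(D_2 f(W,\rho_W))^{-1} \circ D_1 f(W,\rho_W)$. The finite-dimensional gradient for $\mathcal{G}^{t,x}$ then follows by composing the resulting Fréchet derivative with pointwise evaluation at $(t,x)$ and restricting the domain to the $D$-dimensional Fourier subspace of $\Theta$.

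The function-space setting is dictated by the hypothesis $\phi \in H^\beta$ with $\beta \geq 3+d$. I would take $X$ to be a parabolic solution space contained in $C([0,T]; H^\beta(\mathbb{T}^d))$, which via Sobolev embedding lies in $C([0,T]; C^2(\mathbb{T}^d))$ since $\beta > d/2 + 2$; this provides both the regularity needed to make sense of pointwise evaluation at $(t,x)$ and enough smoothness of the coefficients in the linearised PDE to invoke classical parabolic theory. Then $Y$ is the matching forcing/initial-data space and $\Theta$ is a Banach space of sufficiently smooth potentials on $\mathbb{T}^d$ containing the $D$-dimensional Fourier subspace. Straightforward differentiation of the bilinear expressions in $f$ yields $D_1 f(W,\rho)[H] = (-\nabla \cdot (\rho \nabla H \ast \rho),\, 0)$ and $D_2 f(W,\rho)[u] = (\partial_t u - \Delta u - \nabla \cdot (u \nabla W \ast \rho) - \nabla \cdot (\rho \nabla W \ast u),\, u(0))$, and $f$ is $C^k$ because these are bounded multilinear forms on the chosen spaces; the equation $D_2 f(W,\rho_W)[u] = -D_1 f(W,\rho_W)[H]$ is then precisely the linear parabolic problem in the claim.

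The \emph{main obstacle} is showing that $D_2 f(W,\rho_W): X \to Y$ is a homeomorphism, i.e.\ that for any admissible forcing the linearised problem with its two non-local drift/divergence terms admits a unique solution in $X$ depending continuously on the data. The first non-local term has smooth coefficient $\nabla W \ast \rho_W$ and behaves as a standard drift; the more delicate second term $\nabla \cdot (\rho_W \nabla W \ast u)$ depends on $u$ globally in space, but $\nabla W \ast u$ is a convolution of $u$ against a smooth bounded kernel and hence defines a relatively compact perturbation on the parabolic scale. I would establish uniqueness and an a priori $L^2$-energy estimate via Grönwall (using the $C^2$-regularity of $\rho_W$), then bootstrap to the $H^\beta$-valued solution space by parabolic maximal regularity, and obtain existence through a Galerkin or contraction-mapping argument on short time intervals iterated up to $T$. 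Uniqueness of $\rho_W$ itself in $X$ is the known well-posedness of the McKean--Vlasov equation at this level of regularity and is presumably recorded earlier in the paper. Once $D_2 f$ is invertible, Theorem \ref{thm:abstract-derivative} delivers Theorem \ref{thm:linearisation-mckean-vlasov}; pulling back along the inclusion $\mathbb{R}^D \hookrightarrow \Theta$ and composing with the bounded functional $u \mapsto u(t,x)$ identifies the gradient as $H^T \nabla \mathcal{G}^{t,x}(W) = D\rho_W[H](t,x)$, yielding Lemma \ref{lem:discrete-derivatives-forward-map}.
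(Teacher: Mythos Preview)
Your proposal is correct and follows essentially the same route as the paper. The paper takes exactly the spaces you anticipate (namely $F = L^2([0,T];H^{\beta+1}) \cap H^1([0,T];H^{\beta-1})$, which by the trace inequality embeds into $C([0,T];H^\beta)$ as you note), computes $D_1f$ and $D_2f$ as you do, and establishes that $D_2f(W,\rho_W)$ is a homeomorphism via Galerkin approximation with $L^2$-energy estimates and Gr\"onwall, then bootstraps to $H^k$ by repeating the argument with higher-order inner products; the passage to the finite-dimensional gradient is then exactly your composition with pointwise evaluation via the Sobolev embedding into $L^\infty_T L^\infty$.
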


We now turn to the problem of generating random samples from the posterior distribution 
\[
\mathop{}\!\mathrm{d} \Pi ( W | Z_N) \propto e^{\ell_N(W)} \mathop{}\!\mathrm{d} \Pi(W)
\] defined earlier in \eqref{eq:posterior-intro}, for $\mathcal G(\theta)=\rho_W$ arising from the McKean--Vlasov model with $\mathcal X = [0,T] \times \mathbb T^d$. Using the preceding result and ideas from \cite{NicklWang2024_polynomialTimeLangevin}, we can verify gradient stability (\ref{gradstab}) and deduce that \(-\ell_N\) is locally strongly convex near the `ground truth parameter' $W_0$, on average under the discretisation scheme. This holds provided that the Fourier modes of \(\phi\) in \eqref{eq:mckean-vlasov-intro} do not decay too fast, which is a natural identifiability hypothesis in interacting particle models, as discussed in \cite{mckv}. More precisely, if \(Z_N\sim P_{W_0}^{N}\), then we have a lower bound on the smallest eigenvalue of the average Hessian
\[
\lambda_{\min} \left( \mathbb{E}_{W_0} [-\nabla^{2} \ell(W)] \right) \geq c_D > 0 
\] uniformly over potentials \(W\) in a ball around \(W_0\) whose size depends on the dimension $D$  and on the degree of identifiability of the model, governed by the decay of the Fourier modes of $\phi$. The constant \(c_D\) can here be taken to decay at worst polynomially in \(D\). This is the content of Theorem \ref{thm:local-average-curvature}. Then, following ideas from \cite{NicklWang2024_polynomialTimeLangevin,BohrNickl2024, nickl2023bayesian}, we introduce in Section \ref{thm:log-concave-approximation} a globally log-concave surrogate distribution \(\tilde{\Pi}_N\) approximating \(\Pi(\ \! \cdot \mid Z_N)\) in Wasserstein-2 distance. We sample from this surrogate using the Unadjusted Langevin Algorithm (ULA) defined by

\begin{equation}\label{eq:ULA-intro}
\vartheta_{k+1} =\vartheta_k+\gamma\nabla \log (\mathop{}\!\mathrm{d} \tilde{\Pi}_N(W))+\sqrt{2 \gamma} \xi_{k+1} 
\end{equation} 
where \(\xi_k \stackrel{\text { i.i.d. }}{\sim} \mathcal{N}\left(0, I_{D}\right)\) and $\gamma$ is a `stepsize' parameter. Each gradient step involves the (numerical) solution of the non-linear equation \eqref{eq:mckean-vlasov-intro} as well of the linearised parabolic PDE $D\rho_W[H]$, for which a variety of algorithms exist (see \cite{mckv} for references). We obtain a bound on the Wasserstein-2 mixing time of the chain $(\vartheta_k)$ that scales polynomially in \(N,D\) under `warm start'. For full details see Theorem \ref{thm:wasserstein-mixing-time}.

\begin{claim}[Polynomial mixing of ULA]
     Assume that the chain in \eqref{eq:ULA-intro} was started from \(\vartheta_0\) close enough to \(W_0 \in H^{\alpha}\) for some \(\alpha > 0\) large enough, and that the Fourier modes of \(\phi\) in \eqref{eq:mckean-vlasov-intro} do not decay too fast. Then, under further natural regularity assumptions, and for any desired accuracy \(\varepsilon >0\), there exists a step size \(\gamma=\gamma(\varepsilon,D,N)\) and an integer \(k_{\mathrm{mix}}\) growing at most polynomially in \(\varepsilon^{-1},D\) and \(N\), such that for all \(k \geq k_{\mathrm{mix}}\),
    \[
    \mathcal{W}_2^{2}(\mathcal{L}(\vartheta_k), \Pi(\ \! \cdot \mid Z_N)) \leq e^{-N^{d/(2(\alpha+1)+d)}} + \varepsilon 
    \] with \(P_{W_0}^{N}\)-probability tending to 1 exponentially fast.
\end{claim}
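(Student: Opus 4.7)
I would follow the template of \cite{NicklWang2024_polynomialTimeLangevin, BohrNickl2024, nickl2023bayesian}, combining a pathwise local curvature estimate, a globally log-concave surrogate, standard ULA theory applied to the surrogate, and a posterior-contraction bound to transfer the result back to $\Pi(\cdot \mid Z_N)$. The starting point is Theorem \ref{thm:local-average-curvature}, which gives a uniform lower bound $\lambda_{\min}(\mathbb{E}_{W_0}[-\nabla^2 \ell_N(W)]) \geq c_D$ on a ball $B_N$ around $W_0$, with $c_D$ at worst polynomial in $D$. Using the $C^2$ Fr\'echet regularity of $\mathcal{G}$ coming from the implicit-function framework (Theorem \ref{thm:linearisation-mckean-vlasov}, Lemma \ref{lem:discrete-derivatives-forward-map}), a Bernstein-type concentration argument combined with an $\varepsilon$-net covering of $B_N$ would upgrade this to a pathwise bound: with $P_{W_0}^N$-probability tending to $1$ exponentially fast, $\nabla^2(-\ell_N(W)) + \Sigma^{-1} \succeq (c_D/2) I_D$ for every $W \in B_N$.

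Next, I would invoke the construction of \ref{thm:log-concave-approximation} to produce a surrogate $\tilde{\Pi}_N$ that is a smooth modification of $\Pi(\cdot \mid Z_N)$, coinciding with it (up to normalisation) on $B_N$ and extended outside $B_N$ to enforce a global lower bound $\nabla^2(-\log d\tilde{\Pi}_N) \succeq m I_D$ with $m = c_D/2$. A matching upper bound $\nabla^2(-\log d\tilde{\Pi}_N) \preceq M I_D$ with $M$ polynomial in $D$ and $N$ follows from quantitative second-order stability of the forward map together with the boundedness of the $D$-dimensional Fourier projection. Applying the standard Wasserstein-2 convergence bound for ULA on $(m,M)$-log-concave targets (e.g., \cite{dalalyan2017theoretical, DurmusMoulines2019}) then yields
\[
\mathcal{W}_2^{2}(\mathcal{L}(\vartheta_k), \tilde{\Pi}_N) \leq 2(1-\gamma m)^{k}\, \mathcal{W}_2^{2}(\mathcal{L}(\vartheta_0), \tilde{\Pi}_N) + \frac{C(D)\, M^{2} \gamma}{m},
\]
for $\gamma \leq 2/(m+M)$. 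Choosing $\gamma \sim \varepsilon m / (C(D) M^{2})$ and $k_{\mathrm{mix}} \sim (\gamma m)^{-1} \log(\mathcal{W}_2^{2}(\mathcal{L}(\vartheta_0), \tilde{\Pi}_N)/\varepsilon)$, both polynomial in $\varepsilon^{-1}, D, N$, drives the right-hand side below $\varepsilon$. The warm-start hypothesis on $\vartheta_0$ keeps $\mathcal{W}_2^{2}(\mathcal{L}(\vartheta_0), \tilde{\Pi}_N)$ controlled uniformly in $N$.

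To transfer this bound to the true posterior, I would use a posterior contraction argument: under the stated smoothness $\alpha$ of $W_0$ and the Fourier-decay hypothesis on $\phi$ (which activates the identifiability estimates of \cite{mckv}), one obtains $\Pi(B_N^c \mid Z_N) \leq e^{-c N \delta_N^{2}}$ with $\delta_N = N^{-(\alpha+1)/(2(\alpha+1)+d)}$, i.e.\ the standard Gaussian-prior contraction rate; note that $N \delta_N^{2} = N^{d/(2(\alpha+1)+d)}$, matching the exponent in the claim. Combined with second-moment bounds on both $\tilde{\Pi}_N$ and $\Pi(\cdot \mid Z_N)$, this yields $\mathcal{W}_2^{2}(\tilde{\Pi}_N, \Pi(\cdot \mid Z_N)) \leq e^{-N^{d/(2(\alpha+1)+d)}}$ with high $P_{W_0}^N$-probability. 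The triangle inequality then closes the argument.

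\textbf{Main obstacle.} The principal technical hurdle is the \emph{pathwise}, polynomial-in-$D$ control of the score Lipschitz constant $M$ of $\log d\tilde{\Pi}_N$, uniformly over a set of data $Z_N$ of probability tending to one. This requires tracking second-order stability estimates for the McKean--Vlasov forward map $W \mapsto \rho_W$ in the Sobolev norms supplied by the implicit-function framework: specifically, well-posedness of the twice-linearised equation (the linearisation of $D\rho_W[H]$ from Theorem \ref{thm:linearisation-mckean-vlasov} with respect to a second direction) with constants depending only polynomially on $\|W\|_{H^\alpha}$ and on $\|\rho_W\|_{L^\infty}$. The attendant empirical-process concentration of the Hessian over the $D$-dimensional ball $B_N$ --- handled via chaining arguments adapted to the nonlinear parameter-to-solution map --- is the second delicate point, though it is largely a matter of combining the smoothness of $\mathcal{G}$ with standard suprema-of-empirical-processes machinery.
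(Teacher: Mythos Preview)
Your proposal is correct and follows essentially the same route as the paper. The only difference is one of packaging: the paper verifies the abstract conditions of the framework in \cite{nickl2023bayesian} (Conditions 2.1.1, 3.2.1, 3.2.2, 5.1.1 there) and then invokes Theorems 3.2.3, 5.1.3 and 5.2.1 of that reference directly, whereas you unpack the same machinery step by step; in particular, the ``main obstacle'' you flag (pathwise polynomial control of the score Lipschitz constant) is exactly what Proposition~\ref{prop:local-regularity} supplies via a uniform $C^{2,1}$ bound on $\mathcal G^{t,x}$, and the empirical-process concentration you mention is absorbed into the event $\mathscr{E}_{\mathrm{conv}}$ of \cite[Theorem~3.2.3]{nickl2023bayesian}.
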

In the claim above, \(\mathcal{W}_2\) is the Wasserstein-2 distance defined by
\begin{equation}\label{eq:def-wasserstein}
\mathcal{W}_2^{2}(\mu,\nu)\coloneqq \inf_\pi \int_{\mathbb{R}^{D} \times \mathbb{R}^{D}} \left\| x - y \right\|^{2}_D \mathop{}\!\mathrm{d} \pi(x,y)    
\end{equation} 
where \(\left\| \cdot  \right\|_{D}\) is the Euclidean norm on \(\mathbb{R}^{D}\) and the infimum is over all couplings \(\pi\) of \(\mu\) and \(\nu\).

\subsection{Notation}

The set \(\mathbb{N}\) is defined as \(\mathbb{N} \coloneqq \left\{ 1,2,\dots  \right\} \) and $\mathbb Z$ denotes the set of all integers. Throughout, we take the \(d\)-dimensional flat torus \(\mathbb{T}^{d} \coloneqq \mathbb{R}^{d} / \mathbb{Z}^{d}, d \in \mathbb N\) as our spatial domain, equipped with the  induced Euclidean metric. We write \(A \lesssim B\) when there is a constant \(c>0\) such that \(A \leq cB\), where it is implicitly understood that the constant \(c\) only depends on fixed parameters. Then, \(A \gtrsim B\) means \(B \lesssim A\) and \(A \simeq B\) means both \(A \lesssim B\) and \(A \gtrsim B\). We will write \(A \lesssim_p B\) to indicate that the implicit constant \(c\) depends on \(p\). We adopt the standard convention in analysis that the symbol \(C\) denotes a positive constant whose value may vary from line to line.

For \(1 \leq p \leq +\infty\), let \(L^{p}(\mathbb T^{d})\), or \(L^{p}\) for short, be the usual periodic Lebesgue space of real valued functions. Let \(\mathcal{D}(\mathbb T^{d},\mathbb{C})\) be the space of complex-valued, \(C^{\infty}\) and \(1\)-periodic functions on \(\mathbb R^{d}\). Let then \(\mathcal{D}'(\mathbb{T}^{d},\mathbb{C})\) be the space of periodic distributions, i.e. the topological dual of the complex vector space \(\mathcal{D}(\mathbb T^{d},\mathbb{C})\). We define the Sobolev spaces \(H^{s}(\mathbb T^{d})\), or \(H^{s}\) for short, for any \(s \in \mathbb R\), as follows:
\[
H^{s} = \left\{ u \in \mathcal{D}'(\mathbb{T}^{d},\mathbb{C}) : \overline{\hat{u}_{k}} = \hat{u}_{-k} \text{ and } \sum_{k\in \mathbb Z^{d}} (1+\left| k \right|^{2})^s \left| \hat{u}_{k} \right|^{2} < +\infty  \right\} 
\]  
where \(\hat{u}_{k} \coloneqq \left\langle u, e^{2i\pi k \cdot x}\right\rangle \) is the \(k^{th}\) Fourier coefficient of the distribution \(u\) and the bracket denotes the usual duality pairing.
Equipped with the inner-product \(\left\langle u,v \right\rangle _{H^{s}} = \sum_{k\in \mathbb Z^{d}} (1 + |k|^{2})^{s} \hat{u}_{k} \overline{\hat{v}_{k}}\) , \(H^{s}\) is a real, separable Hilbert space. We further define the homogeneous Sobolev space \(\dot{H}^{s}\) as the closed subspace of \(H^{s}\) consisting of \(u \in H^{s}\) such that \(\hat{u}_0=0\). On \(\dot{H}^{s}\), we have the equivalent inner-product \(\left\langle u,v \right\rangle _{\dot{H}^{s}} = \sum_{k\in \mathbb Z^{d}} |k|^{2s} \hat{u}_{k} \overline{\hat{v}_{k}}\). More generally, for \(B\) a Banach space of functions defined on \(\mathbb{T}^{d}\), \(\dot{B}\) will denote the closed subspace of \(B\) consisting of mean-zero functions. In particular, we have \(\dot{L}^{2} = \dot{H}^{0}\), and we let \(\langle u,v\rangle \coloneqq \int u \bar{v}\) for \(u,v \in L^2\). For \(k \in \mathbb N\), the space \(W^{k,\infty}\) is the Sobolev space of functions in \(L^{2}\) with weak partial derivatives up to order \(k\) in \(L^{\infty}\).

For \(T>0\), let \(\mathcal X=[0,T]\times\mathbb T^d\) be the time–space cylinder equipped with the uniform probability measure \(\lambda=\lambda_T\) on \(\mathcal X\).  We denote by \(L^2(\mathcal X,\lambda)\) the space of  square–integrable functions on \(\mathcal X\). More generally, let \(B\) be a Banach space and \(1\le p\le\infty\).  We write \(L^p([0,T];B)\) for the Bochner space of strongly measurable maps \(t\mapsto H(t)\in B\) such that
\(t\mapsto\|H(t)\|_B\in L^p([0,T])\).  When \(B\) is a space of spatial functions on \(\mathbb T^d\) we often view such an \(H\) as a function \(H:\mathcal X\to\mathbb R\) with \(H(t,\cdot)\in B\) for almost every \(t\). For an integer \(m\ge 0\) the Sobolev space in time \(H^m([0,T];B)\) is defined by
\[
H^m([0,T];B)
=\Big\{H\in L^2([0,T];B)\;:\;\partial_t^j H\in L^2([0,T];B)\ \text{for }0\le j\le m\Big\},
\]
equipped with the norm \(\|H\|_{H^m([0,T];B)}^2 \;=\; \sum_{j=0}^m \big\|\partial_t^j H\big\|_{L^2([0,T];B)}^2\). We will often use the shorthand \(H^m_TB\) for \(H^m([0,T];B)\). For any two Banach spaces \(B_1, B_2\), we will endow the intersection \(B=B_1 \cap B_2\) with the sum norm \(\|\cdot \|_B = \|\cdot \|_{B_1} + \|\cdot \|_{B_2}\). Recall that \((B,\| \cdot \|_{B})\) then defines a Banach space, as long as \(B_1\) and \(B_2\) continuously embed into a common Hausdorff topological vector space (e.g. \(L^{2}([0,T],L^{2})\)). For \(X\) and \(Y\) two normed spaces, we write \(X \hookrightarrow Y\) when \(X\) is continuously embedded in \(Y\).

We will also repeatedly use the following observation from parabolic PDE theory: By choosing a suitable mollifying sequence, exactly as in the proof of \cite[Chapter~5.9, Theorem 4]{evans2010partial}, one shows that for any \(\gamma \geq 0\) and \(u \in  L^{2}([0,T];H^{\gamma+1}) \cap H^{1}([0,T];H^{\gamma-1})\), we have \(u \in C([0,T];H^{\gamma})\) and 
\begin{equation}\label{eq:trace-theorem-time}
    \sup_{0 \leq t \leq T} \left\| u(t) \right\|_{H^{\gamma}} \lesssim_{\gamma,d,T} \left\| u \right\|_{L^{2}([0,T];H^{\gamma+1})} + \left\| u \right\|_{H^{1}([0,T];H^{\gamma-1})}.
\end{equation} 

\section{Computing gradients in nonlinear PDE models}

In this section, we demonstrate how general techniques from non-linear functional analysis and PDE theory reviewed in the appendix apply to the two parabolic equations (\ref{eq:reaction-diffusion-intro}), (\ref{eq:mckean-vlasov-intro}) that we presented in the introduction. We show how to derive expressions for and prove stability of the gradient of $\mathcal G$ (as in (\ref{gradstab})) and we also explore in Subsection \ref{stablin} the related issue of proving global stability estimates for $\mathcal G$ itself by `pseudo-linearisation' ideas.

\subsection{Reaction-diffusion equation}

Let \(C^{k}_b(\mathbb R)\) be the Banach space of bounded \(k\)-times continuously differentiable functions on \(\mathbb R\) with bounded derivatives up to order \(k\), equipped with the norm \(\|f\|_{C^{k}} \coloneqq \sup_{x\in \mathbb{R}} \sum_{i=0}^{k} |f^{(i)}(x)| \). Fix \(\phi \in H^{1}\). For \(R \in C^{2}_b(\mathbb R)\), let \(u=u_R \in L^{2}([0,T];H^{2})\cap H^{1}([0,T];L^{2})\) denote the unique strong solution of
\begin{equation}\label{eq:reaction-diffusion}
    \begin{cases}
     \partial_t u - \Delta u = R(u) &\text{in } (0,T) \times \mathbb{T}^{d} \\
     u(0) = \phi &\text{in } \mathbb{T}^{d}
\end{cases}
\end{equation} 
Existence of the solution \(u_R\) and its uniqueness in \(C([0,T];L^2)\) follow from standard fixed point arguments, see e.g. \cite[p.536f]{evans2010partial}, letting the operator \(A\) in that reference assign \(u\) to \emph{strong} (instead of weak) solutions of the linear problem \(\partial_t v - \Delta v = R(u)\) with initial condition \(\phi\), see also \cite[Theorem 6]{bvm-time-evolution}. We omit the details. In applications, \(u_R\) describes the evolution of some chemical substance (with initial distribution \(\phi\)), where \(R(u)\) represents the amount of the substance being created or destroyed at each point in space and time. We assume $d \le 3$ to focus on the main ideas.

\begin{theorem}[Linearisation of the reaction-diffusion equation]\label{thm:linearisation-reaction-diffusion}
    Let \(d \leq 3\). The map \(\mathcal{G}: C^{2}_b(\mathbb R) \to L^{2}([0,T];H^{2})\cap H^{1}([0,T];L^{2})\) defined by \(\mathcal{G}(R) = u_R\) solving \eqref{eq:reaction-diffusion} is \(C^1\) in the Fréchet sense.    Moreover, for any \(R\in C^{2}_b(\mathbb R)\), its Fréchet derivative at $R$, $$D \mathcal{G}(R) :C^2_b(\mathbb R) \to L^{2}([0,T];H^{2})\cap H^{1}([0,T];L^{2}) $$ is given by the linear map \(D\mathcal{G}(R)[H] \eqqcolon i_H\) where  \(i=i_H\in L^{2}([0,T];H^{2})     \cap H^{1}([0,T];L^{2})\) is the unique strong solution of the following linear parabolic PDE
    \[
    \begin{cases}
     \partial_t i - \Delta i - R'(u_R)i = H(u_R) &\text{in } (0,T) \times \mathbb{T}^{d} \\
     i(0) = 0 &\text{in } \mathbb{T}^{d}.
\end{cases}
    \]
\end{theorem}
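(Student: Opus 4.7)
The plan is to deduce the result from the Banach-space implicit function theorem (the abstract version mentioned after \eqref{eq:derivative-identity}, namely Theorem \ref{thm:abstract-derivative}) applied to the explicit map
\[
f:\Theta\times X\to Y,\qquad f(R,u)=\bigl(\partial_t u-\Delta u-R(u),\ u(0)-\phi\bigr),
\]
with $\Theta=C^2_b(\mathbb R)$, $X=L^{2}([0,T];H^{2})\cap H^{1}([0,T];L^{2})$ and $Y=L^{2}([0,T];L^{2})\times H^{1}$, as already suggested in the general framework. I need to verify three items: (i) $f$ is well-defined and $C^1$ Fréchet on $\Theta\times X$; (ii) $D_2f(R,u_R):X\to Y$ is a linear homeomorphism for every $R$; and (iii) $u_R$ is the unique zero of $f(R,\cdot)$ in $X$. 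Item (iii) is immediate from the standard uniqueness for \eqref{eq:reaction-diffusion} in $C([0,T];L^2)$ and the inclusion $X\hookrightarrow C([0,T];L^2)$ supplied by \eqref{eq:trace-theorem-time}. The trace bound \eqref{eq:trace-theorem-time} with $\gamma=1$ also makes $u(0)\in H^1$ meaningful, and the Sobolev embedding $H^2\hookrightarrow L^\infty$ (valid for $d\le 3$, which is where the dimensional restriction enters) ensures $R(u)\in L^2([0,T];L^2)$.

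The Fréchet differentiability in (i) reduces to that of the composition $(R,u)\mapsto R(u)$: a Taylor expansion of the scalar function $R+H$ around $u$ gives
\[
(R+H)(u+v)-R(u)-R'(u)v-H(u)=\tfrac12 R''(\xi_1)v^2+\bigl(H'(\xi_2)-H'(u)\bigr)v,
\]
whose $L^2([0,T];L^2)$ norm is dominated by $\|R\|_{C^2}\|v\|_{L^4(\mathcal X)}^2+\|H\|_{C^2}\|v\|_{L^2(\mathcal X)}$. Using the embedding $X\hookrightarrow L^4(\mathcal X)$ (by interpolation between $C([0,T];H^1)$ and $L^2([0,T];H^2)$, again requiring $d\le 3$), the right-hand side is $o(\|H\|_{C^2}+\|v\|_X)$, identifying
\[
D_1 f(R,u)[H]=(-H(u),0),\qquad D_2 f(R,u)[v]=(\partial_t v-\Delta v-R'(u)v,\ v(0)).
\]
Continuity of these partial derivatives in $(R,u)$ follows from the same $L^4$-type estimates applied to $R'$ in place of $R$; note that although $X$ does not embed into $L^\infty(\mathcal X)$ for $d=2,3$, a Hölder split on $L^4\times L^4$ is sufficient to control the operator norm into $Y$.

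The critical step is (ii): for every source $(g,v_0)\in L^2([0,T];L^2)\times H^1$ the linear parabolic PDE
\[
\partial_t v-\Delta v-R'(u_R)v=g,\qquad v(0)=v_0,
\]
must admit a unique strong solution $v\in X$ with $\|v\|_X\lesssim \|g\|_{L^2L^2}+\|v_0\|_{H^1}$. Since $R'(u_R)\in L^\infty(\mathcal X)$ (because $R\in C^2_b$), this is a standard linear parabolic result: construct $v$ by Galerkin approximation, obtain $L^2$ and $H^1$ energy bounds by testing with $v$ and $-\Delta v$ (the latter using $\phi\in H^1$ to control $v(0)$), and then bootstrap to the full $X$-estimate by reading $\Delta v=\partial_t v-R'(u_R)v-g$ and absorbing each term into $L^2([0,T];L^2)$. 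I expect this energy argument to be the main technical obstacle, because the time–space dependent coefficient $R'(u_R)$ must be handled with Grönwall-type estimates on the $H^1$-norm of $v$ without any regularity of $R'(u_R)$ beyond $L^\infty$. With (i)--(iii) in place, Theorem \ref{thm:abstract-derivative} gives $\mathcal G\in C^1$ with
\[
D\mathcal G(R)=-D_2 f(R,u_R)^{-1}\circ D_1 f(R,u_R),
\]
so $D\mathcal G(R)[H]=D_2 f(R,u_R)^{-1}(H(u_R),0)$ is by construction the unique strong solution $i\in X$ of the linear PDE in the statement.
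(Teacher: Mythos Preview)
Your proposal is correct and follows essentially the same route as the paper: set up $f(R,u)=(\partial_t u-\Delta u-R(u),\,u(0)-\phi)$ on the spaces $C^2_b(\mathbb R)\times\bigl(L^2_TH^2\cap H^1_TL^2\bigr)\to L^2_TL^2\times H^1$, verify that the Nemytskii map $(R,u)\mapsto R(u)$ is $C^1$ via Taylor expansion, invoke standard linear parabolic theory for the invertibility of $D_2f$, and conclude by Theorem~\ref{thm:abstract-derivative}. The only cosmetic differences are that the paper bounds the quadratic remainder $R''(\xi)v^2$ via the H\"older split $\|v\|_{L^\infty_TL^2}\|v\|_{L^2_TL^\infty}$ rather than your $\|v\|_{L^4(\mathcal X)}^2$, and treats the linear parabolic isomorphism as a one-line citation rather than a Galerkin sketch; also note that your displayed Taylor remainder should read $H'(\xi_2)v$ rather than $(H'(\xi_2)-H'(u))v$, though your subsequent bound and conclusion are unaffected.
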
 

\begin{proof}[Proof of Theorem \ref{thm:linearisation-reaction-diffusion}]
    The proof is a direct application of Theorem \ref{thm:abstract-derivative}; we first fit the notation to the setting of that theorem.
    Let
    \[
    E =C^{2}_b(\mathbb R),\quad  F = L^{2}([0,T];H^{2})\cap H^{1}([0,T];L^{2}),\quad G = L^{2}([0,T];L^{2})\times H^{1}
    \]
    and define the map
\begin{equation}\label{eq:f-reaction-diffusion}
\begin{array}{rcl}
f: & E \times F &\longrightarrow \hspace{56pt} G\\[6pt]
   & (R,u) &\longmapsto \left( \partial_t u - \Delta u - R(u), u(0)-\phi \right).
\end{array}
\end{equation}
The function \(f\) is well-defined  by \eqref{eq:trace-theorem-time} and since \(\phi \in H^{1}\) and \(R \in C^{2}_b(\mathbb{R})\). Let us for the sake of illustration show carefully that \(f\) is \(C^{1}\) on \(E \times F\). Consider auxiliary maps \(g:F \to G\) and \(h: E \times F \to L^{2}([0,T];L^{2})\) defined by 
\[
g(u) = (\partial_t u - \Delta u, u(0)), \quad h(R,u) = R(u).
\] 
Then, \(f(R,u)=g(u)+ (-h(R,u),0) + (0,-\phi)\). The map \(g\) is clearly linear, and it is moreover continuous, using again \eqref{eq:trace-theorem-time}. Hence, \(g\) is smooth in the Fréchet sense by Lemma \ref{lem:mult-is-smooth}. Since \((0,-\phi)\) is just a constant, it suffices to prove that \(h\) is \(C^{1}\). For any \(R \in E\), we note 
\[
\sup_{u \in F} \| h(R,u) \|_{L^{2}_TL^{2}}  \leq \sqrt{T} \| R \|_{C^{0}}.
\]
Since \(h\) is linear in \(R\), we conclude that \(h\) is continuous in the first component, uniformly over \(u \in F\). From this we also conclude that \(h\) is \(C^{\infty}\) in the first component, and \(D_1h(R,u)[H] = H(u)\). Next, for any \(u \in F\) and \(u_n \to u\) in \(F\), we obtain by the fundamental theorem of calculus
\[
\| R(u_n) - R(u) \|_{L^{2}_{T}L^{2}} \leq \| R \|_{C^{1}} \| u_n - u \|_{L^{2}_{T}L^{2}} \to 0.
\]    
Therefore, \(h\) is jointly continuous on \(E \times F\). Let us now compute the derivative of \(h\) with respect to \(u\). Using the fundamental theorem of calculus twice, we obtain for any \(s \in F\),
\[
\left| R(u+s) - R(u) - R'(u)s \right| \leq \frac{1}{2} \| R \|_{C^{2}} |s|^{2} \quad \text{ a.e. in } [0,T] \times \mathbb{T}^{d}.
\] 
Integrating over time and space, we obtain
\[
\| R(u+s) - R(u) - R'(u)s \|_{L^{2}_TL^{2}} \leq \frac{1}{2} \| R \|_{C^{2}} \| s^{2} \|_{L^{2}_TL^{2}} \leq  \frac{1}{2} \| R \|_{C^{2}} \| s \|_{L^{\infty}_TL^{2}} \| s \|_{L^{2}_TL^{\infty}}.
\] 
Furthermore, we obtain \(\| s \|_{L^{\infty}_TL^{2}} \lesssim \| s \|_{F}\) by \eqref{eq:trace-theorem-time}, and \(\| s \|_{L^{2}_TL^{\infty}} \lesssim \| s \|_{F}\) by Sobolev embeddings (using that \(d \leq 3\)). Since \(s \mapsto R'(u) s\) is bounded linear, we conclude that \(h\) is differentiable in the second component with derivative \(D_2h(R,u)[s] = R'(u) s\). To conclude that \(h\) is \(C^{1}\), it remains to show that \(D_2h\) is jointly continuous in \(E \times F\). Let \((R,u) \in E \times F\) with \(R_n \to R\) in \(E\) and \(u_n \to u\) in \(F\). Then, for any \(s \in F\) with \(\| s \|_{F} \leq 1\), we obtain
\begin{align*}
    \| R_n'(u_n)s - R'(u)s \|_{L^{2}_TL^{2}} &\leq \| s \|_{L^{\infty}_T L^{2}} \| R_n'(u_n) - R'(u) \|_{L^{2}_TL^{\infty}}  \\
    &\lesssim \| s \|_{F} \left( \| R_n'(u_n) - R'(u_n) \|_{L^{2}_TL^{\infty}} + \| R'(u_n) - R'(u) \|_{L^{2}_TL^{\infty}} \right)\\
    & \leq \sqrt{T} \| R_n - R \|_{E} + \| R \|_{C^{2}} \| u_n - u \|_{F} \xrightarrow[n \to \infty]{} 0
\end{align*}  
using again the fundamental theorem of calculus to obtain that \(R'\) is Lipschitz, and the Sobolev embedding \(H^{2} \hookrightarrow L^{\infty}\) when \(d \leq 3\). This concludes the proof that \(h\) is \(C^{1}\), and hence that \(f\) is \(C^{1}\) as well. Let us note right away that, by the rules of differentiation for multilinear maps, we have for any \(R,H \in E\) and \(u,s\in F\),   
\[
D_1f(R,u)[H] = -(H(u),0), \quad D_2f(R,u)[s] = (\partial_t s - \Delta s - R'(u)s, s(0)).
\] 

We now check the final assumptions of Theorem \ref{thm:abstract-derivative}. For any \(R \in E\), \(u_R \in F\) (\(R \in C^2_b(\mathbb R)\) is globally Lipschitz), \(f(R,u_R)=0\) by definition of \(f\), and \(u_R\) is unique in \(C([0,T];L^{2})\) and hence in \(F \hookrightarrow C([0,T];L^{2})\) by \eqref{eq:trace-theorem-time}. Finally, given the form of \(D_2f\) in the display above, \(D_2f(R,u_R)\) defines a standard linear parabolic initial value problem whose theory is well known. In particular, \(D_2f(R,u_R) : F \to G\) is a bijection with bounded inverse by the standard existence, uniqueness and regularity theory for linear parabolic equations, see e.g. \cite[Chapter 7]{evans2010partial}. It is linear by definition, and hence continuous by the open mapping theorem. We can therefore apply Theorem \ref{thm:abstract-derivative} to conclude that \(\mathcal{G}\) is \(C^{1}\) in the Fréchet sense, and that its derivative is given by the formula in the statement of the theorem. 
\end{proof}

\begin{remark}[Stability of the gradient] \normalfont
    As already noted in the introduction, a key property of the gradient for statistical identifiability of the model is `gradient stability' (\ref{gradstab}). A complete analysis will be carried out in the McKean--Vlasov model below. For the reaction-diffusion model, we restrict ourselves to the following sketch of some ideas, to be investigated in detail in future work. The derivative in Theorem \ref{thm:linearisation-reaction-diffusion} can be written as
    \[
    D \mathcal{G} (R) [H] = (D_2 f(R,u_R))^{-1} [ H(u_R) ].
    \] 
Since \(D_2 f(R, u_R)\) is a linear homeomorphism, its inverse can be lower bounded in suitable norms, using interpolation estimates. The main injectivity challenge therefore stems from the term \(H(u_R)\). To show that \(H\) must vanish when \(\| H(u_R) \|^2_{L^{2}_{T}L^{2}} = \int_0^T\int_{\mathbb T^d} |H(u_R(t,x))|^2 \mathop{}\!\mathrm{d}x \mathop{}\!\mathrm{d}t\) does, one needs to control the range \(\{\, u_R(t,x) : (t,x) \in \mathcal{X} \,\}\). This can, for instance, be achieved via a small-time argument: since \(t \mapsto u_R(t)\) is continuous, for sufficiently small times the range of \(u_R\) remains close to that of the initial condition \(\phi\). By prescribing the range of \(\phi\), one may then hope to recover \(H\) on this set.
\end{remark}

\subsection{McKean--Vlasov equation}\label{sec:application-mckv}

Throughout this section, for $W \in \dot{W}^{2,\infty}$ a \textit{mean zero} interaction potential, \(\rho_{W}\) will denote the unique strong solution of a McKean--Vlasov non-linear and non-local parabolic PDE defined by
\begin{align}\label{eq:mckv}
  \partial_t \rho  &= \Delta \rho + \nabla \cdot (\rho \nabla W \ast \rho) \text{ on } [0,T]\times \mathbb{T}^{d}\\
  \rho(0)&= \phi \notag,
\end{align} where \(\phi\) is some probability density on \(\mathbb{T}^{d}\) modelling the initial state of the system. For background on how this equation arises as the mean-field limit of diffusing interacting particle systems, we refer the reader to the introduction of \cite{mckv} and references therein. To ensure sufficient regularity of $\rho_W$, we will employ the following assumption on $\phi$.
 
\begin{assumption}\label{assumption:regulatity-mckv}
    The initial condition \(\phi\) lies in \(H^{\beta }\) for some even integer \(\beta \geq 3 + d\).
\end{assumption}

Existence, uniqueness and regularity of \(\rho_W\) is guaranteed for $W \in \dot{W}^{2,\infty}$, under Assumption \ref{assumption:regulatity-mckv}:  in particular, the unique solution \(\rho_W\) to \eqref{eq:mckv} then lies in \(L^{2}([0,T];H^{\beta+1})\cap H^{1}([0,T];H^{\beta-1})\). We refer to \cite{chazelle_noisy_consensus}, \cite{armaGP}, \cite[Theorem 3.4]{Gvalani_thesis}, and \cite[Theorem 5]{mckv} for proofs supporting these claims. That $\beta$ is an \textit{even} integer is required only for convenience, as it streamlines some of the parabolic regularity proofs.

\subsubsection{Properties of the map \texorpdfstring{\(f\)}{f}}

Let us first fix some notation. We consider spaces 
\begin{equation}\label{keydefinition}
E :={\dot W}^{2,\infty}, ~F = F_\beta := L^{2}([0,T];H^{\beta+1}) \cap H^{1}([0,T];H^{\beta-1}), ~ G =G_\beta:= L^{2}([0,T];H^{\beta-1}) \times H^{\beta}.
\end{equation}
where we recall that \(F\) is endowed with the sum norm \(\| \cdot  \|_{L^{2}_{T}H^{\beta+1}} + \| \cdot \|_{H^{1}_{T}H^{\beta-1}}\). Then, define \(f : E \times F \to G\) and its components \(f_1, f_2\) by
\begin{equation}\label{eq:f-mckean-vlasov}
    f(W,\rho ) \coloneqq \left( \partial _{t}\rho - \Delta \rho - \nabla \cdot \left( \rho \nabla W \ast \rho \right), \rho(0)-\phi \right) \eqqcolon (f_1(W,\rho), f_2(W,\rho)).
\end{equation} 
Let us argue that \(f\) is well-defined on its domain. Owing to \eqref{eq:trace-theorem-time} and since \(\rho \in F\), \(\rho(0)\) is well-defined and is in \(H^{\beta}\). Then, let \(\mathcal{T}\) be the trilinear operator defined by
\begin{equation}\label{eq:def-Tcal}
\mathcal{T}(r,V,s) \coloneqq \nabla \cdot \left( r \nabla V \ast s \right). 
\end{equation} 
The following lemma proves that \(\mathcal{T}(r,V,s)\) is in \(L^{2}([0,T];H^{\beta-1})\) when \(V \in E\) and \(r,s \in F\).
\begin{lemma}\label{lem:trilin-bounded}
    For \(V \in E\), \(r,s \in F\) and integer \(\beta\geq 0\), 
    \[
    \left\| \mathcal{T}(r,V,s) \right\|_{L^{2}([0,T];H^{\beta-1})} \leq C \left\| r \right\|_{F} \left\| V \right\|_{E} \left\| s \right\|_{F}
    \] where \(C = C(\beta,d) > 0\).
\end{lemma}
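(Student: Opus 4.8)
The plan is to combine three facts: convolving with $\nabla W$ (here $W=V$) gains one spatial derivative on the $H^{\sigma}$–scale; the outer divergence in $\mathcal T$ costs exactly one derivative; and the product of two Sobolev functions can be controlled by a tame (Moser-type) inequality in which the top-order factor is paired with an $L^{\infty}$ factor. The trace inequality \eqref{eq:trace-theorem-time} would then distribute the time integrability correctly. Throughout I would argue at a fixed time $t\in[0,T]$ and integrate at the very end.

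First I would record the convolution smoothing estimate. Since $\mathbb T^{d}$ has unit measure, $W^{2,\infty}\hookrightarrow H^{2}$, so $(1+|k|^{2})\,|\hat V_k|\le\|V\|_{H^{2}}\lesssim\|V\|_E$ for every $k\in\mathbb Z^{d}$. As $\widehat{\nabla V\ast u}_k=2\pi i k\,\hat V_k\,\hat u_k$, this yields $|2\pi k\,\hat V_k|\lesssim\|V\|_E\,(1+|k|^{2})^{-1/2}$ and hence
\[
\|\nabla V\ast u\|_{H^{\sigma}}\;\le\;C(d)\,\|V\|_E\,\|u\|_{H^{\sigma-1}}\qquad\text{for every }\sigma\in\mathbb R,
\]
i.e.\ convolving with $\nabla V$ raises the Sobolev index by one. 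Separately, Young's inequality and $\|\nabla V\|_{L^{\infty}}\le\|V\|_E$ give $\|\nabla V\ast u\|_{L^{\infty}}\le\|\nabla V\|_{L^{\infty}}\|u\|_{L^{1}}\lesssim\|V\|_E\|u\|_{L^{2}}$ (using again the unit measure of the torus).

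Next, writing $w(t)\coloneqq\nabla V\ast s(t)$, the identity $\mathcal T(r,V,s)=\nabla\cdot(r\,w)$ from \eqref{eq:def-Tcal} together with $\|\nabla\cdot g\|_{H^{\beta-1}}\le\|g\|_{H^{\beta}}$ (Parseval, valid for all real $\beta$) reduces the claim to the bound $\|r\,w\|_{L^{2}([0,T];H^{\beta})}\lesssim\|V\|_E\|r\|_F\|s\|_F$. For $\beta\ge1$ I would apply the classical tame product inequality $\|fg\|_{H^{\beta}}\lesssim\|f\|_{L^{\infty}}\|g\|_{H^{\beta}}+\|f\|_{H^{\beta}}\|g\|_{L^{\infty}}$ with $f=r(t)$, $g=w(t)$, and estimate the four resulting norms: $\|w(t)\|_{H^{\beta}}\lesssim\|V\|_E\|s(t)\|_{H^{\beta}}$ and $\|r(t)\|_{H^{\beta}}$ are both bounded \emph{uniformly in $t$}, by $\|V\|_E\|s\|_F$ and $\|r\|_F$ respectively, via \eqref{eq:trace-theorem-time} with $\gamma=\beta$ (recall $F=F_\beta\hookrightarrow C([0,T];H^{\beta})$); whereas $\|w(t)\|_{L^{\infty}}\lesssim\|V\|_E\|s(t)\|_{L^{2}}$ and $\|r(t)\|_{L^{\infty}}\lesssim\|r(t)\|_{H^{\beta+1}}$ (Sobolev embedding, using $\beta+1>d/2$, which holds under Assumption~\ref{assumption:regulatity-mckv}) are square-integrable in $t$, with $L^{2}_t$–norm $\lesssim\|V\|_E\|s\|_F$ and $\lesssim\|r\|_F$ respectively, since $r,s\in L^{2}([0,T];H^{\beta+1})$. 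Pairing, in each of the two products, the uniform-in-$t$ factor with the $L^{2}_t$ factor then gives $\int_{0}^{T}\|r(t)w(t)\|_{H^{\beta}}^{2}\,\mathrm{d}t\lesssim\|V\|_E^{2}\|r\|_F^{2}\|s\|_F^{2}$. The borderline case $\beta=0$ I would handle directly: $\|\mathcal T(r,V,s)(t)\|_{H^{-1}}\le\|r(t)w(t)\|_{L^{2}}\le\|r(t)\|_{L^{2}}\|w(t)\|_{L^{\infty}}\lesssim\|V\|_E\|r(t)\|_{L^{2}}\|s(t)\|_{L^{2}}$, which is integrated using $\sup_t\|r(t)\|_{L^{2}}\lesssim\|r\|_F$ (trace with $\gamma=0$) and $s\in L^{2}([0,T];L^{2})$.

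The only delicate point — the main obstacle — is making the product estimate close with the correct time integrability: one can afford to put neither the top-order spatial norm of $r$, nor the pair consisting of the top-order norm of $w$ and the $L^{\infty}$–norm of $r$, in $L^{\infty}_t$. The decomposition above works precisely because convolution with $\nabla V$ gains one derivative, so $w(t)$ carries a uniform-in-$t$ $H^{\beta}$–bound matching $r\in L^{\infty}_t H^{\beta}$, while the $L^{\infty}$–norm of $r$ multiplying the top-order part of $w$ is absorbed by $r\in L^{2}_t H^{\beta+1}$ — and the embedding $H^{\beta+1}\hookrightarrow L^{\infty}$ used there is exactly where the regularity level of Assumption~\ref{assumption:regulatity-mckv} (or simply $\beta+1>d/2$) enters. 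For the finitely many small $\beta$ with $\beta+1\le d/2$ — irrelevant for the application, where $\beta\ge3+d$ — one argues as in the $\beta=0$ case, using Hölder's inequality and Sobolev embeddings in the scale just below $H^{\beta+1}$.
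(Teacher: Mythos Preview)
Your argument is correct for the regime $\beta+1>d/2$ (and for $\beta=0$), but it takes a different route from the paper's and is somewhat more laborious. The paper's proof is a one-liner: it invokes the Leibniz-type estimate
\[
\|\mathcal T(r,V,s)(t)\|_{H^{\beta-1}}\;\lesssim_{\beta,d}\;\|r(t)\|_{H^{\beta}}\,\|\nabla V\ast s(t)\|_{W^{\beta,\infty}}\;\lesssim\;\|r(t)\|_{H^{\beta}}\,\|\nabla V\|_{L^{\infty}}\,\|s(t)\|_{H^{\beta}},
\]
where the second step is Young's convolution inequality $\|\nabla V\ast s(t)\|_{W^{\beta,\infty}}\le\|\nabla V\|_{L^{\infty}}\|s(t)\|_{W^{\beta,1}}\lesssim\|\nabla V\|_{L^{\infty}}\|s(t)\|_{H^{\beta}}$, and then puts $\sup_t\|r(t)\|_{H^{\beta}}\lesssim\|r\|_F$ (trace) and $\|s\|_{L^{2}_TH^{\beta}}\le\|s\|_F$. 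The key difference is that the paper places the convolution $\nabla V\ast s$ in the $W^{\beta,\infty}$ slot of the product estimate rather than in the $H^{\beta}$ slot; this means no Sobolev embedding on $r$ is ever needed, and the argument works uniformly for every integer $\beta\ge0$ in any dimension without the case distinction you introduce.

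What your approach buys is a sharper use of the regularity of $V$: you exploit $V\in H^{2}$ on the Fourier side to gain a full derivative through the convolution, which the paper does not. But this gain is not needed for the lemma as stated, and the price you pay is the reliance on $H^{\beta+1}\hookrightarrow L^{\infty}$, which forces you to handle the cases $1\le\beta\le d/2-1$ separately. Your sketch for those cases (``H\"older and Sobolev embeddings just below $H^{\beta+1}$'') is plausible but not fleshed out; it would require a short interpolation argument to close. Since the lemma is stated for \emph{all} integers $\beta\ge0$, this is a genuine if minor gap --- easily avoided by switching to the paper's $W^{\beta,\infty}$ estimate for the convolution factor.
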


\begin{proof}
     Lemma \ref{lem:trilin-op-T-bound} equation \eqref{eq:trilin-leibniz} with \(k=\beta\) and \(l=0\) gives     
\[
\left\| \mathcal{T}(r,V,s) \right\|_{L^{2}([0,T];H^{\beta-1})} \lesssim_{\beta ,d} \sup_{0 \leq t \leq T} \left\| r \right\|_{H^{\beta}} \left\| s \right\|_{L^{2}([0,T];H^{\beta})} \left\| \nabla V \right\|_{L^{\infty}}.
\]
By definition of the norms on \(E\) and \(F\), \(\left\| \nabla V \right\|_{L^{\infty}} \leq \left\| V \right\|_{E}\) and \(\left\| s \right\|_{L^{2}([0,T];H^{\beta})} \leq \left\| s \right\|_{F}\). Finally, using \eqref{eq:trace-theorem-time} we obtain \(\sup_{0 \leq t \leq T} \left\| r \right\|_{H^{\beta}} \lesssim \left\| r \right\|_{F}\), which concludes the proof. 

\end{proof}

From the lemma above we deduce that \(T(\rho,W,\rho)\) is in \(L^{2}([0,T];H^{\beta-1})\) and therefore \(f(W,\rho )\) is well-defined as an element of \(G\). We now turn to proving that \(f\) has the properties required in the framework of Section \ref{subsec:general-framework}. 

\begin{proposition}[Derivatives of \(f\)]\label{prop:derivatives-f}
    For any integer \(\beta \geq 0\), the map \(f:E \times F \to G\) defined in \eqref{eq:f-mckean-vlasov} is \(C^{\infty}\) in the Fréchet sense. The first derivative of \(f\) is given by, for any \(W,H \in E\) and \(\rho, s \in F\):
\begin{align*}
    &D_1 f(W, \rho) [H] = \left( - \nabla \cdot \left( \rho \nabla H \ast \rho \right), 0 \right)  \\
    &D_2 f(W, \rho) [s] \ = \left( \partial_t s - \Delta s - \nabla \cdot \left( s \nabla W \ast \rho  \right) - \nabla \cdot \left( \rho \nabla W \ast s \right), s(0) \right).
\end{align*} 
The second derivative of \(f\) is given by, for any \(W,H_1,H_2 \in E\) and \(\rho,s_1,s_2 \in F\): 
\begin{align*}
    &D_{1,1}^{2}f(W,\rho) [H_1,H_2] = (0,0) \\
    &D_{1,2}^{2}f(W,\rho) [H_1,s_2] = \left( -\nabla \cdot \left( s_2 \nabla H_1 \ast \rho \right) -\nabla \cdot \left( \rho \nabla H_1 \ast s_2 \right) , 0\right)   \\
    &D_{2,2}^{2}f(W,\rho) [s_1,s_2] = \left( -\nabla \cdot \left( s_1 \nabla W \ast s_2 \right) -\nabla \cdot \left( s_2 \nabla W \ast s_1 \right),0\right),
\end{align*} and \(D_{2,1}^{2}f=D_{1,2}^{2}f\).
\end{proposition}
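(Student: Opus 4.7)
The plan is to decompose $f$ into a constant piece, a bounded linear piece, and a bounded multilinear piece, and then invoke the general fact (Lemma \ref{lem:mult-is-smooth}) that bounded multilinear maps between Banach spaces are Fr\'echet $C^\infty$, with derivatives computed by the product/Leibniz rule. Since $f$ is essentially polynomial in $(W,\rho)$ (linear in $W$, quadratic in $\rho$), this will yield $C^\infty$ smoothness essentially for free, and the derivative formulas will follow from a direct application of the multilinear chain rule.

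Concretely, I would first write
\[
f(W,\rho) \;=\; L(\rho) \;-\; \bigl(\mathcal{T}(\rho,W,\rho),\,0\bigr) \;-\; (0,\phi),
\]
where $L:F\to G$ is the linear map $L(\rho)=(\partial_t\rho-\Delta\rho,\rho(0))$ and $\mathcal{T}$ is the trilinear operator of \eqref{eq:def-Tcal}. The constant term $(0,\phi)$ is trivially $C^\infty$. Boundedness of $L$ is immediate: $\|\partial_t\rho\|_{L^2_TH^{\beta-1}}\le \|\rho\|_F$, $\|\Delta\rho\|_{L^2_TH^{\beta-1}}\le \|\rho\|_{L^2_TH^{\beta+1}}\le \|\rho\|_F$, and $\|\rho(0)\|_{H^\beta}\lesssim \|\rho\|_F$ by the trace inequality \eqref{eq:trace-theorem-time}. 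Hence $L$ is bounded linear and therefore $C^\infty$ with constant derivative equal to itself. Lemma \ref{lem:trilin-bounded} shows that $\mathcal{T}:F\times E\times F\to L^2_TH^{\beta-1}$ is bounded trilinear, so by Lemma \ref{lem:mult-is-smooth} the map $\mathcal{T}$ is Fr\'echet $C^\infty$, with first derivative
\[
D\mathcal{T}(r,V,s)[r',V',s'] \;=\; \mathcal{T}(r',V,s)+\mathcal{T}(r,V',s)+\mathcal{T}(r,V,s'),
\]
second derivative the obvious sum of six terms bilinear in $((r',V',s'),(r'',V'',s''))$, and higher derivatives of order $\ge 4$ vanishing identically.

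Next, the map $\Phi: E\times F\to F\times E\times F$, $\Phi(W,\rho)=(\rho,W,\rho)$, is bounded linear, hence $C^\infty$. The chain rule then gives that $(W,\rho)\mapsto \mathcal{T}(\rho,W,\rho)=\mathcal{T}\circ \Phi(W,\rho)$ is $C^\infty$ from $E\times F$ into $L^2_TH^{\beta-1}$, with first derivative in direction $(H,s)$ equal to
\[
\mathcal{T}(s,W,\rho)+\mathcal{T}(\rho,H,\rho)+\mathcal{T}(\rho,W,s).
\]
Separating the increments into the two components yields the asserted partial derivatives $D_1f$ and $D_2f$ after reinstating the minus sign and the trivial second component. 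For the second derivatives I would simply differentiate these expressions once more using the same multilinear product rule: since $D_1f(W,\rho)[H]$ does not depend on $W$ at all, we obtain $D^2_{1,1}f=0$; since it is linear in $\rho$ only through the two explicit factors, differentiating in $\rho$ in direction $s_2$ reproduces the stated formula for $D^2_{1,2}f$; and similarly for $D^2_{2,2}f$. The equality $D^2_{2,1}f=D^2_{1,2}f$ is then Schwarz's theorem for $C^2$ maps between Banach spaces.

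There is no serious obstacle here: the analytical content was already absorbed into Lemma \ref{lem:trilin-bounded}, and the rest is the standard functional-analytic machinery of smoothness for multilinear maps together with the chain rule. The only small point that deserves a careful line is that $L$ genuinely maps into $G=L^2_TH^{\beta-1}\times H^\beta$ (so that it can be legitimately added to the trilinear term, which lands only in the first component); this is handled by the trace estimate \eqref{eq:trace-theorem-time} quoted above, and by the fact that $\beta\ge 3+d$ guarantees $\phi\in H^\beta$ by Assumption \ref{assumption:regulatity-mckv}.
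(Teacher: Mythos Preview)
Your proposal is correct and follows essentially the same approach as the paper's proof: decompose $f$ as a constant plus a bounded linear map $L(\rho)=(\partial_t\rho-\Delta\rho,\rho(0))$ plus the bounded trilinear piece $(\mathcal{T}(\rho,W,\rho),0)$ composed with the linear embedding $(W,\rho)\mapsto(\rho,W,\rho)$, then invoke Lemma~\ref{lem:mult-is-smooth} and the chain rule. The only (harmless) slip is your closing remark that $\beta\ge 3+d$ is needed for $\phi\in H^\beta$: the proposition is stated for any integer $\beta\ge 0$, and well-definedness of $f$ into $G$ simply presupposes $\phi\in H^\beta$ from the ambient setup rather than from Assumption~\ref{assumption:regulatity-mckv}.
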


\begin{proof}
    See appendix \ref{appendix:further-technical-results}.
\end{proof}

The general form of the derivative of $\mathcal G$ in \eqref{eq:derivative-identity} requires as an essential ingredient the existence of an inverse of $D_2f$ between appropriate spaces. Inspection of the previous proposition suggests that for non-linear parabolic equations such as those considered here, $(D_2f)^{-1}$ is  essentially the solution operator of a \textit{linear} parabolic PDE, so that standard techniques apply. It is worth noting that existence of an appropriate inverse is less immediate in the corresponding \textit{steady-state} equation because of the presence of a zeroth-order coefficient in the resulting (now elliptic) differential operator.

\begin{proposition}\label{prop:D2f-homeomorphism}
    Let \(\phi\) satisfy Assumption \ref{assumption:regulatity-mckv} for some \(\beta \geq 3 + d\). For any \(W \in \dot{W}^{2,\infty}\), recall that \(\rho_W\) denotes the unique solution to \eqref{eq:mckv}. Then, the map
    \[
    D_2f(W,\rho _W) : L^{2}([0,T];H^{\beta+1}) \cap H^{1}([0,T];H^{\beta-1}) \to L^{2}([0,T];H^{\beta-1}) \times H^{\beta}
    \] from Proposition \ref{prop:derivatives-f} is a linear homeomorphism.
\end{proposition}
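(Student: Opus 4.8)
The plan is to show that $D_2f(W,\rho_W)$ is a bounded bijection and then invoke the open mapping theorem to conclude it is a homeomorphism. Boundedness of $D_2f(W,\rho_W): F \to G$ is the easy direction: given the formula
\[
D_2 f(W,\rho_W)[s] = \bigl( \partial_t s - \Delta s - \nabla\cdot(s\,\nabla W \ast \rho_W) - \nabla\cdot(\rho_W \,\nabla W\ast s),\; s(0)\bigr),
\]
the map $s \mapsto \partial_t s - \Delta s$ is bounded from $F$ into $L^2_T H^{\beta-1}$ by definition of the norms, the trace $s\mapsto s(0)$ is bounded into $H^\beta$ by \eqref{eq:trace-theorem-time}, and the two zeroth/first-order coupling terms are controlled by Lemma \ref{lem:trilin-bounded}: $\|\mathcal T(s,W,\rho_W)\|_{L^2_TH^{\beta-1}} \lesssim \|s\|_F\|W\|_E\|\rho_W\|_F$ and likewise for $\mathcal T(\rho_W,W,s)$. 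Since $\|\rho_W\|_F < \infty$ under Assumption \ref{assumption:regulatity-mckv} and $W\in \dot W^{2,\infty}=E$, these are bounded linear maps in $s$, so $D_2f(W,\rho_W)$ is bounded.

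The substance is surjectivity together with injectivity, i.e. unique solvability of the linear parabolic Cauchy problem
\[
\partial_t s - \Delta s - \nabla\cdot(s\,\mathbf b) - \nabla\cdot(\rho_W\,\nabla W \ast s) = g, \qquad s(0) = s_0,
\]
for every $(g,s_0)\in G = L^2_T H^{\beta-1}\times H^\beta$, with $s\in F = L^2_T H^{\beta+1}\cap H^1_T H^{\beta-1}$, where $\mathbf b := \nabla W\ast\rho_W$. The first step is to record the regularity of the coefficients: since $\rho_W\in L^2_TH^{\beta+1}$ with $\sup_t\|\rho_W(t)\|_{H^\beta}\lesssim \|\rho_W\|_F$ by \eqref{eq:trace-theorem-time}, and convolution with $\nabla W$ is smoothing (it multiplies Fourier modes by bounded factors, as $W\in \dot W^{2,\infty}$), the drift $\mathbf b = \nabla W\ast\rho_W$ is smooth and bounded in space uniformly in time; the non-local term $s\mapsto \nabla\cdot(\rho_W\,\nabla W\ast s)$ is a bounded, in fact compact-type, lower-order perturbation. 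I would then handle the problem in two stages. First, treat the principal part $\partial_t - \Delta$ together with the local divergence-form drift term $-\nabla\cdot(s\,\mathbf b)$ as a standard uniformly parabolic operator with smooth coefficients; by the classical $L^2$-based existence–uniqueness–regularity theory for linear parabolic equations (Galerkin approximation plus energy estimates differentiated up to order $\beta$ in space, exactly as in \cite[Chapter 7]{evans2010partial}, using the even integer $\beta$ to iterate cleanly), this operator is an isomorphism $F\to G$. Second, absorb the non-local term $K s := \nabla\cdot(\rho_W\,\nabla W\ast s)$ by a perturbation/fixed-point argument: writing the equation as $s = \mathcal L^{-1}(g + s_0\text{-data} + Ks)$ with $\mathcal L$ the isomorphism from stage one, I would either run a Banach fixed point on a short time interval $[0,\tau]$ where $\|\mathcal L^{-1}K\|_{F\to F}<1$ (the operator norm of $\mathcal L^{-1}$ over $[0,\tau]$ shrinks with $\tau$ for the zero-initial-data part), and then iterate over $[\tau,2\tau],\dots$ to cover $[0,T]$; or, alternatively, observe that $K:F\to L^2_TH^{\beta-1}$ factors through a compact embedding and apply the Fredholm alternative, so that surjectivity follows once injectivity is checked. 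Injectivity of the full operator is itself a Grönwall/energy estimate: if $D_2f(W,\rho_W)[s]=0$ then multiplying by $s$ and integrating, the $-\Delta s$ term gives $\|\nabla s\|_{L^2}^2$, the drift and non-local terms are bounded by $\|s\|_{L^2}\|\nabla s\|_{L^2}$ times $\|\mathbf b\|_{L^\infty}$ and $\|\rho_W\nabla W\|$-type quantities, so after Young's inequality and Grönwall one gets $s\equiv 0$.

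The main obstacle I expect is controlling the non-local term $\nabla\cdot(\rho_W\,\nabla W\ast s)$ at the \emph{top} regularity level $H^{\beta-1}$ on the right-hand side, i.e. checking that this term does not destroy the $F\to G$ mapping picture and that it is genuinely subordinate to the principal part. This is exactly where Assumption \ref{assumption:regulatity-mckv} ($\beta\ge 3+d$) and the Sobolev embeddings hidden in Lemma \ref{lem:trilin-bounded} / Lemma \ref{lem:trilin-op-T-bound} are needed: one must distribute $\beta$ derivatives across the product $\rho_W\cdot(\nabla W\ast s)$, and since $\nabla W\ast s$ has the \emph{same} spatial regularity as $s$ (convolution with the fixed $W$ being only a bounded multiplier, not smoothing in the scale of $H^s$), the estimate must put the high derivatives on $s$ and use $\sup_t\|\rho_W\|_{H^\beta}<\infty$ together with $H^\beta \hookrightarrow W^{1,\infty}$ (valid for $\beta\ge 3+d$, indeed $\beta > d/2+1$ suffices) to keep $\rho_W\nabla W\ast s$ in $L^2_TH^\beta$ and hence its divergence in $L^2_TH^{\beta-1}$, with a constant linear in $\|s\|_{L^2_TH^{\beta+1}}$ but with a small-time or lower-order gain that lets the perturbation argument close. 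Everything else — boundedness, the energy estimates for the principal part, the Grönwall uniqueness — is standard linear parabolic theory and I would cite \cite{evans2010partial} rather than reproduce it.
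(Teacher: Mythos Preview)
Your proposal is correct in outline but organised differently from the paper. The paper does \emph{not} split off the non-local term as a perturbation of a standard local parabolic operator. Instead, it develops the full existence--uniqueness--regularity theory for $\partial_t - \mathcal{L}_W$ (both lower-order terms included) directly via energy estimates and Galerkin approximation in Appendix~\ref{appendix:parabolic-regularity} (Theorems~\ref{thm:existence-linPDE} and~\ref{thm:linPDE-higher-regularity}): in the $L^2$ energy identity the non-local term $\langle \nabla\cdot(\rho_W\nabla W\ast u), u\rangle$ is bounded exactly like the local drift term, by $\|\nabla W\|_{L^\infty}\|\rho_W\|_{L^2}\|u\|_{L^2}\|\nabla u\|_{L^2}$ via Lemma~\ref{lem:trilin-op-T-bound}, so Young plus Gr\"onwall absorbs both simultaneously; higher regularity is obtained by repeating the estimate with the $H^k$ inner product, using $\sup_t\|\rho_W(t)\|_{H^{k+1}}<\infty$ for $k\le\beta-1$. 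The non-zero initial datum $h\in H^\beta$ is handled by a lifting (subtracting the heat flow with $v(0)=h$), and boundedness of the inverse is shown explicitly from the regularity estimates, after which the open mapping theorem finishes. Your short-time fixed-point / iteration route would also work and is more modular if one wants to import the standard local parabolic theory as a black box; your worry about the top-order control of the non-local term is in fact unfounded, since Lemma~\ref{lem:trilin-op-T-bound}~\eqref{eq:trilin-leibniz} with $k=\beta$, $l=0$ gives $\|\nabla\cdot(\rho_W\nabla W\ast s)\|_{H^{\beta-1}}\lesssim \|\nabla W\|_{L^\infty}\|\rho_W\|_{H^\beta}\|s\|_{H^\beta}$, one derivative below the top, so the perturbation is genuinely lower-order. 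The Fredholm alternative you mention is less reliable here, since compactness of $K:F\to L^2_TH^{\beta-1}$ in the mixed time--space topology would need a separate argument.
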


\begin{proof}[Proof of Proposition \ref{prop:D2f-homeomorphism}]
    Fix \(W \in E\), let \(B \coloneqq D_2f(W,\rho _W)\) and recall, with the notations of Appendix \ref{appendix:parabolic-regularity}, that 
    \[
    B u = \left( \partial_t u - \Delta u - \nabla \cdot \left( u \nabla W \ast \rho_W  \right) - \nabla \cdot \left( \rho_W \nabla W \ast u \right), u(0) \right) = \left( \left( \partial_t - \mathcal{L}_W \right)u, u(0)  \right).
    \]
    Theorem \ref{thm:existence-linPDE} combined with Theorem \ref{thm:linPDE-higher-regularity} then gives that for any \(g \in L^{2}([0,T];H^{\beta-1})\) there exists a unique \(u \in L^{2}([0,T];H^{\beta+1}) \cap H^{1}([0,T];H^{\beta-1})\) such that \(Bu = (g,0)\). To construct the full inverse, let \((g,h) \in L^{2}([0,T];H^{\beta-1}) \times H^{\beta}\), and recall that by the trace theorem, \cite[Theorem I.3.2]{lions}, there exists  \(v \in L^{2}([0,T];H^{\beta+1}) \cap H^{1}([0,T];H^{\beta-1})\) such that \(v(0)=h\) (e.g., \(v\) solves the heat equation with initial condition \(h\)). Choose such \(v\) and let \(w_v\) be the unique solution to 
    \[
    Bw_v = (g - \left( \partial_t - \mathcal{L}_W \right)v,0)
    \] which is well-defined by Theorem \ref{thm:existence-linPDE}. Then, let \(u \coloneqq v + w_v\) and notice that 
    \[
    Bu = Bv + Bw_v = (g,h).
    \] Note that \(u\) is unique because if \(u_1\) and \(u_2\) are such that \(Bu_1 = Bu_2 = (g,h)\) then \(B(u_1 - u_2) = (0,0)\) which then implies \(u_1=u_2\) using Theorem \ref{thm:existence-linPDE}. We therefore have constructed an inverse \(B^{-1}(g,h) \coloneqq u\), and we now show that the inverse is bounded. Using again the representation \(u=v+w_v\), where we fix \(v\) to be the heat flow with initial condition \(h\), the regularity estimate in Theorem \ref{thm:linPDE-higher-regularity}, standard regularity estimates for the heat equation, Lemma \ref{lem:trilin-bounded}, and \cite[Theorem 5]{mckv} yield
    \begin{align*}
        \left\| B^{-1}(g,h) \right\|_{F}=\left\| u \right\|_{F} &\leq \left\| v \right\|_{F} + \left\| w_v \right\|_{F}\\
        &\lesssim \left\| h \right\|_{H^{\beta}} + \left\| g - \left( \partial_t - \mathcal{L}_W \right)v \right\|_{L^{2}([0,T];H^{\beta-1})}\\
        &\lesssim \left\| h \right\|_{H^{\beta}}+\left\| g \right\|_{L^{2}([0,T];H^{\beta-1})} + \left\| v \right\|_{F} \lesssim \left\| (g,h) \right\|_{G} + \left\| h \right\|_{H^{\beta}} \leq C \left\| (g,h) \right\|_{G}
    \end{align*} with \(C=C(d,T,\left\| W \right\|_{E},\left\| \phi \right\|_{H^{\beta}})\). This proves that \(B^{-1}\) is bounded and therefore, by the open mapping theorem, the proposition is proved. 
\end{proof}

\subsubsection{Global stability estimates and pseudo-linearisation identity}\label{stablin}

Before turning to the computation of the linearisation, we briefly examine how equation \eqref{eq:fundamental-equation} can be useful to derive global stability estimates for the forward map \(\mathcal{G}: \Theta \to X\), where $\Theta, X$ are Banach spaces -- in fact, the arguments to follow work as long as $\Theta$ is an open convex subset of a Banach space. In the notation of Section \ref{subsec:general-framework} we wish to obtain inequalities of the form
\begin{equation}\label{eq:general-stability-estimate}
    \| \theta_2 - \theta_1 \|_1 \lesssim \| \mathcal{G}(\theta_2) - \mathcal{G}(\theta_1)\|_2
\end{equation} for an appropriate choice of norms \(\| \cdot  \|_1,\| \cdot  \|_2\). The implicit multiplicative constant is required to be uniform over all \(\theta_1, \theta_2 \in \Theta\) such that \(\| \theta_i \|_{\mathcal{R}} \leq M\), \(i\in\{1,2\}\), where \(\| \cdot  \|_{\mathcal{R}}\) is an appropriate regularisation norm, e.g. \(\mathcal{R} = H^{a}\), \(a\geq0\). Let us assume that \(f\) is \(C^{1}\) on \(\Theta \times X\) so that the fundamental theorem of calculus in Banach spaces, e.g. \cite[Proposition A.2.3]{LiuRockner_introSPDEs}, applied to \(u \mapsto f(\theta_1+(\theta_2-\theta_1)u, \mathcal{G}(\theta_1))\) combined with the chain rule, e.g. \cite[Theorem 8.2.1]{dieudonne}, yields
\[
f(\theta_2,\mathcal{G}(\theta_1)) - f(\theta_1,\mathcal{G}(\theta_1)) = \int_{0}^{1} D_1f(\theta_u, \mathcal{G}(\theta_1)) [\theta_2 - \theta_1] \mathop{}\!\mathrm{d}u
\] 
where \(\theta_u=\theta_1 + (\theta_2-\theta_1)u\) lies on the line segment connecting $\theta_1$ to $\theta_2$. Similarly, we obtain
\[
f(\theta_2,\mathcal{G}(\theta_2)) - f(\theta_2,\mathcal{G}(\theta_1)) = \int_{0}^{1} D_2f(\theta_2, \mathcal{G}_u) [\mathcal{G}(\theta_2) - \mathcal{G}(\theta_1)] \mathop{}\!\mathrm{d}u
\] 
where \(\mathcal{G}_u=\mathcal{G}(\theta_1)+(\mathcal{G}(\theta_2)-\mathcal{G}(\theta_1))u\). Summing up these two identities and using that \(f(\mathcal{G}(\theta),\theta)=0\) for any \(\theta \in \Theta\) (from \eqref{eq:fundamental-equation}), we deduce
\begin{equation}\label{eq:ftc-identity}
    \int_{0}^{1} D_2f(\theta_2, \mathcal{G}_u) [\mathcal{G}(\theta_2) - \mathcal{G}(\theta_1)] \mathop{}\!\mathrm{d}u = - \int_{0}^{1} D_1f(\theta_u, \mathcal{G}(\theta_1)) [\theta_2 - \theta_1] \mathop{}\!\mathrm{d}u.
\end{equation} 
Using that the Bochner integral commutes with bounded linear maps (see e.g. \cite[Proposition A.2.2]{LiuRockner_introSPDEs}), we can rewrite the previous identity as 
\begin{equation}\label{eq:pseudo-lin}
\mathcal{A} (\mathcal{G}(\theta_2) - \mathcal{G}(\theta_1)) = \mathcal{B} (\theta_2 - \theta_1)    
\end{equation} 
where we defined the averaged linear operators \(\mathcal{A}: X \to Y \) and \(\mathcal{B}:\Theta \to Y\) as 
\[
\mathcal{A} = \int_{0}^{1} D_2f(\theta_2, \mathcal{G}_u) \mathop{}\!\mathrm{d}u, \quad \mathcal{B} = - \int_{0}^{1} D_1f(\theta_u, \mathcal{G}(\theta_1)) \mathop{}\!\mathrm{d}u.
\]  
In the context of PDE inverse problems, this relation implies that a change in the parameter, \(\theta_2 - \theta_1\), is mapped to a change in the observation, \(\mathcal{G}(\theta_2)-\mathcal{G}(\theta_1)\), via the \emph{linear} PDE \eqref{eq:pseudo-lin}. Following the terminology of \cite[Chapter 13]{PSU23}, one may refer to \eqref{eq:pseudo-lin} as a \emph{pseudo-linearisation identity} -- see Remark \ref{remark:stab-estimate-linear-PDE} for how this applies to the McKean--Vlasov equation. In particular \eqref{eq:pseudo-lin} suggests that to obtain the stability estimate \eqref{eq:general-stability-estimate}, it suffices to find a third norm \(\left\| \cdot \right\|_{3}\) such that \(\| \mathcal{A}u \|_{3} \lesssim \| u \|_{2}\) and \( \| \mathcal{B}v \|_{3} \gtrsim \| v \|_{1} \). The upper bound for \(\mathcal{A}\) is generally straightforward since \(D_2f(\theta_2,\mathcal{G}_s)\) is itself already a bounded operator from \(X\) to \(Y\). The lower bound for \(\mathcal{B}\) is more delicate and is essential for the stability of the map $\theta \mapsto \mathcal G(\theta)$. 

\begin{remark}\label{rem:choice-of-path} \normalfont
    In principle, one could apply the fundamental theorem of calculus along \textit{any} \(C^1\) path from \((\theta_1,\mathcal{G}(\theta_1))\) to \((\theta_2,\mathcal{G}(\theta_2))\) to obtain an identity similar to \eqref{eq:ftc-identity}, as long as \(f\) remains \(C^1\) on the image of the path. Note that the piecewise \(C^{1}\) path chosen above works particularly well for problems where the parameter \(\theta\) appears linearly in the map \(f\). Indeed, \(D_1f\) is then constant in \(\theta\) so that \(\mathcal{B}\) simplifies to \(\mathcal{B} = - D_1f(\mathcal{G}(\theta_1))\). However, other choices of paths could be more adapted to specific problems. 
\end{remark}

As an illustration of these general ideas, we obtain a new proof of the stability estimate \cite[Theorem 2]{mckv} avoiding the fixed point iteration used there. To deal with identifiability issues, we follow \cite{mckv} and introduce the following assumption on the initial condition \(\phi\). 
\begin{assumption}\label{assumption:phi}
    \begin{enumerate}
    \item[]
    \item \(\phi\) is strictly positive on \(\mathbb T^{d}\) with \({\inf_{x \in \mathbb T^d}\phi(x)} \geq \phi_{\mathrm{min}}>0.\) 
    \item There exists \(c_{\ast}>0\) such that for all \(k\in \mathbb{Z}^{d}\) and some \(\zeta > \beta + d/2\)
    \begin{equation}\label{eq:fourier-decay-phi}
        |\widehat{\phi}_k| \geq c_{\ast} |k|^{-\zeta}.
    \end{equation}      
\end{enumerate}
\end{assumption}
Densities satisfying Assumptions \ref{assumption:phi} and \ref{assumption:regulatity-mckv} for any choice of integer $\beta>0$ and \(\zeta > \beta + d/2\) do exist: this is discussed in \cite[Example 3]{mckv} for $\zeta=2m, m \in \mathbb N,$ and extends to arbitrary $\zeta>\beta+d/2$ after replacing the Laplace-type densities in \cite{mckv} by periodised probability kernels of appropriate Bessel-potentials as given on p.132 in \cite{S70}. An interpretation of this hypothesis is that the initial state of the interacting particle system is sufficiently different from the uniform equilibrium state (see \cite{armaGP}) so that the observed dynamics are informative. If the dynamics are started with $\phi=1$ then all its Fourier modes except the first vanish and (\ref{eq:fourier-decay-phi}) fails. But such $\phi$ is a steady state of the dynamics \eqref{eq:mckv} for all $W$, which implies $\rho_W(t)=1$ for all $t$ and \(W\) and therefore forbids a stability estimate -- see \cite{mckv} for more discussion.

\begin{theorem}[Stability estimate]\label{thm:mckv-stability-estimate}
    Let \(\phi\) satisfy Assumption \ref{assumption:regulatity-mckv} for some \(\beta \geq 3 + d\) as well as Assumption \ref{assumption:phi} for some \(\zeta > \beta + d/2\). For any \(W \in \dot{W}^{2,\infty}\), let \(\rho_W\) denote the unique solution to the PDE \eqref{eq:mckv} with initial condition $\phi$. Consider the space \(\dot{E}_K\) of mean-zero trigonometric polynomials of degree at most \(K\) from \eqref{eq:def-E_K}. Fix \(M>0,K\geq 1\) and let \(W_1 \in \dot{W}^{2,\infty}\), \(W_2 \in \dot{E}_K\) such that \(\left\| W_1 \right\|_{W^{2,\infty}}, \left\| W_2 \right\|_{W^{2,\infty}} \leq M\). Then, there exists a constant \(S\) depending on \(d,T,M,c_{\ast},\phi_{\mathrm{min}},\| \phi \|_{H^{d}}\) (but not on \(K\)) such that 
    \[
    \left\| W_2-W_{1} \right\|_{L^{2}} \leq \left\| W_1 - W_{1,K} \right\|_{L^{2}} + S K^{3\zeta/2} \left(\left\| \rho_{W_2} - \rho_{W_1} \right\|_{L^{2}([0,T];L^{2})} + \left\| \rho_{W_2} - \rho_{W_1} \right\|_{H^{1}([0,T];H^{-2})} \right)
    \] where \(W_{1,K}\) is the projection of \(W_1\) onto the space \(\dot{E}_K\).
\end{theorem}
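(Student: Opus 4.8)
I would run exactly the two--sided scheme described after Remark~\ref{rem:choice-of-path}, applied to the map $f$ of \eqref{eq:f-mckean-vlasov} with $\theta_1=W_1$, $\theta_2=W_2$ in the pseudo--linearisation identity \eqref{eq:pseudo-lin}. Since $W$ enters $f$ linearly, $D_1f$ is independent of $W$, so by Remark~\ref{rem:choice-of-path} the identity reads $\mathcal A(\rho_{W_2}-\rho_{W_1})=\mathcal B(W_2-W_1)$ with
\begin{align*}
\mathcal B(H)&=\bigl(\nabla\cdot(\rho_{W_1}\,\nabla H\ast\rho_{W_1}),\,0\bigr),\\
\mathcal A(s)&=\bigl(\partial_t s-\Delta s-\nabla\cdot(s\,\nabla W_2\ast\bar{\mathcal G})-\nabla\cdot(\bar{\mathcal G}\,\nabla W_2\ast s),\ s(0)\bigr),
\end{align*}
where $\bar{\mathcal G}=\tfrac12(\rho_{W_1}+\rho_{W_2})$ and $s=\rho_{W_2}-\rho_{W_1}$. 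To make this rigorous I would invoke Proposition~\ref{prop:derivatives-f} ($f\in C^\infty$), the regularity theory recalled after Assumption~\ref{assumption:regulatity-mckv} together with \cite[Thm.~5]{mckv} (so $\rho_{W_1},\rho_{W_2},\bar{\mathcal G}\in F_\beta$, and $R:=\sup_{[0,T]}\|\rho_{W_1}(t)\|_{H^\beta}$ is controlled by $M,\|\phi\|_{H^\beta},T,d$), and convexity of the segment from $W_1$ to $W_2$ in $\dot W^{2,\infty}$. Note that the second component of $\mathcal A(s)$ vanishes, $s(0)=0$, precisely because $\rho_{W_1}$ and $\rho_{W_2}$ start from the same $\phi$. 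As the norm $\|\cdot\|_3$ on the (nonzero) first component I take $\|\cdot\|_{L^2([0,t_0];H^{-2})}$ for a cutoff time $t_0$ to be calibrated; it will turn out that $t_0\simeq K^{-\zeta}$ is forced.

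\textbf{Upper bound for $\mathcal A$, then lower bound for $\mathcal B$.} The upper bound is routine: $\|\partial_t s\|_{L^2_TH^{-2}}\le\|s\|_{H^1_TH^{-2}}$, $\|\Delta s\|_{L^2_TH^{-2}}\lesssim\|s\|_{L^2_TL^2}$, and the two convolution terms are divergences of products of $s$ with functions bounded by $M$ (using $\|\nabla W_2\|_{L^\infty}\le M$ and $\|\bar{\mathcal G}(t)\|_{L^1}=1$), up to harmless Sobolev factors; hence $\|\mathcal A(s)\|_3\lesssim\|s\|_{L^2_TL^2}+\|s\|_{H^1_TH^{-2}}$, which by \eqref{eq:pseudo-lin} bounds $\|\mathcal B(W_2-W_1)\|_3$. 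The heart of the proof is the matching lower bound. Set $V:=W_2-W_1$, so that $P_KV=W_2-W_{1,K}\in\dot E_K$ and $(I-P_K)V=W_{1,K}-W_1$, where $P_K$ is Fourier truncation to $|k|\le K$. First I \emph{freeze time}: differentiating \eqref{eq:mckv} and using Lemma~\ref{lem:trilin-bounded} and the bound $R$ gives $|\partial_t\widehat{\rho_{W_1}}_k(t)|\lesssim R(1+RM)\,|k|^{2-\beta}$ for $k\ne0$, hence $|\widehat{\rho_{W_1}(t)}_k-\widehat\phi_k|\lesssim t\,|k|^{2-\beta}$; since $|\widehat\phi_k|\ge c_\ast|k|^{-\zeta}$ and $\beta\ge3+d$, for $t\le t_0\simeq K^{-\zeta}$ this yields $|\widehat{\rho_{W_1}(t)}_k|\simeq|\widehat\phi_k|$ on $|k|\le K$ and $\|\rho_{W_1}(t)-\phi\|_{L^\infty}\lesssim t$. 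Next I \emph{dualise at $t=0$}: let $w=w_V$ be the unique mean--zero solution of the uniformly elliptic equation $\nabla\cdot(\phi\nabla w)=\Delta\bigl((P_KV)\ast\phi\bigr)$, well posed as $\phi\ge\phi_{\min}>0$; integrating by parts and using that $\phi\nabla w-\nabla((P_KV)\ast\phi)$ is divergence--free (hence $L^2$--orthogonal to gradients) gives
\[
\bigl\langle\nabla\cdot(\phi\,\nabla V\ast\phi),\,w\bigr\rangle=-4\pi^2\,\widetilde X^2,\qquad \widetilde X^2:=\sum_{0<|k|\le K}|k|^2|\widehat V_k|^2|\widehat\phi_k|^2,
\]
while elliptic estimates give $\|\nabla w\|_{L^2}\lesssim\phi_{\min}^{-1}\widetilde X$ and $\|w\|_{H^2}\lesssim K\widetilde X$ (because $(P_KV)\ast\phi$ has degree $\le K$). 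Then I \emph{transfer to $t\le t_0$}: expanding $\mathcal B(V)(t)-\mathcal B(V)(0)$ into divergences involving $\rho_{W_1}(t)-\phi$ and estimating via the \emph{mode--wise} ratio $|\widehat{\rho_{W_1}(t)}_k-\widehat\phi_k|/|\widehat\phi_k|\lesssim t\,|k|^{\zeta+2-\beta}$ (here $\beta>2$ defeats the $K^\zeta$ lost in the decay inversion below), the bound $\|\rho_{W_1}(t)-\phi\|_{L^\infty}\lesssim t$ and the bounds on $w$, together with a dichotomy on whether $\widetilde X\gtrsim K^{-\zeta}\|W_1-W_{1,K}\|_{L^2}$, one gets in the non--trivial branch $|\langle\mathcal B(V)(t),w\rangle|\ge2\pi^2\widetilde X^2$ for all $t\le t_0$; hence $\|\mathcal B(V)(t)\|_{H^{-2}}\ge|\langle\mathcal B(V)(t),w\rangle|/\|w\|_{H^2}\gtrsim\widetilde X/K$ and so $\|\mathcal B(V)\|_3\gtrsim\sqrt{t_0}\,\widetilde X/K\simeq K^{-\zeta/2-1}\widetilde X$. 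Finally I \emph{invert the decay}: Assumption~\ref{assumption:phi}(2) gives $\widetilde X^2\ge c_\ast^2\sum_{0<|k|\le K}|k|^{2-2\zeta}|\widehat V_k|^2\ge c_\ast^2K^{2-2\zeta}\|W_2-W_{1,K}\|_{L^2}^2$.

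\textbf{Conclusion.} Chaining the two bounds, in the non--trivial branch $c_\ast K^{-\zeta/2-1}K^{1-\zeta}\|W_2-W_{1,K}\|_{L^2}\lesssim\|\mathcal B(V)\|_3=\|\mathcal A(s)\|_3\lesssim\|\rho_{W_2}-\rho_{W_1}\|_{L^2_TL^2}+\|\rho_{W_2}-\rho_{W_1}\|_{H^1_TH^{-2}}$, i.e.\ $\|W_2-W_{1,K}\|_{L^2}\lesssim K^{3\zeta/2}(\cdots)$; in the trivial branch $\|W_2-W_{1,K}\|_{L^2}\le c_\ast^{-1}K^{\zeta-1}\widetilde X\lesssim\|W_1-W_{1,K}\|_{L^2}$. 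The theorem follows from $\|W_2-W_1\|_{L^2}\le\|W_2-W_{1,K}\|_{L^2}+\|W_1-W_{1,K}\|_{L^2}$, with $S$ absorbing the constants (which depend only on the stated parameters). The hard part is the transfer step: ensuring every error from moving $\rho_{W_1}$ off its initial value $\phi$ costs only a negative power of $K$ (or the benign factor $\|W_1-W_{1,K}\|_{L^2}$), which is what pins down both the mode--wise closeness estimate and the precise scaling $t_0\simeq K^{-\zeta}$ needed to land on the exponent $3\zeta/2$ rather than a larger one; a pervasive subtlety is that $\mathcal B$ is not uniformly injective on $L^2$, so the truncation $P_K$ and the degree--$K$ structure of $W_2$ have to be used at each step (notably in the elliptic duality, where building $w_V$ from $P_KV$ rather than $V$ is what prevents a spurious $K^{3\zeta/2}$ from attaching to $\|W_1-W_{1,K}\|_{L^2}$).
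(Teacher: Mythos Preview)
Your overall plan---pseudo-linearisation via \eqref{eq:pseudo-lin}, an upper bound for $\mathcal A$ in $L^2_TH^{-2}$, and a lower bound for $\mathcal B$---matches the paper exactly, and your upper bound is essentially the same. Where you diverge is in the lower bound for $\mathcal B$: you freeze the elliptic duality at $t=0$ (building $w$ from $\phi$) and then attempt to \emph{transfer} the pairing to times $t\le t_0$ by controlling $\mathcal B(V)(t)-\mathcal B(V)(0)$ mode-wise. The paper takes a simpler route that sidesteps this transfer entirely: it performs the duality \emph{at each time $t$}, using that $\rho_{W_1}(t)\ge\rho_{\min}>0$ uniformly (from \cite[Thm.~5]{mckv}). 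Concretely, with $\psi(t):=H\ast\rho_{W_1}(t)$ one solves $-\nabla\cdot(\rho_{W_1}(t)\nabla u_t)=\psi(t)$ and tests against $u_t/\|u_t\|_{H^2}$ to get $\|\nabla\cdot(\rho_{W_1}(t)\nabla\psi(t))\|_{H^{-2}}\ge C_{\mathrm{reg}}^{-1}\|\psi(t)\|_{L^2}$ for every $t$; then only the deconvolution step needs a small-time argument, and that requires merely $|\widehat{\rho_{W_1}}(t,k)-\widehat\phi_k|\le Ct$ (Lipschitz in $L^1$), not your finer mode-wise rate. The dichotomy you introduce is then unnecessary: one obtains $\|W_2-W_{1,K}\|_{L^2}\le SK^{3\zeta/2}(\cdots)$ directly and appends $\|W_1-W_{1,K}\|_{L^2}$ only at the end by the triangle inequality.

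Your approach may be salvageable, but the transfer step as written is the genuine weak point. When you pair $\mathcal B(V)(t)-\mathcal B(V)(0)$ with $w$, the error terms involve $\nabla V\ast\rho_{W_1}(t)$, which carries the high-frequency part $(I-P_K)V=W_{1,K}-W_1$; bounding this contribution by something comparable to $\tilde X^2$ (or absorbing it into the ``trivial branch'') requires more than the mode-wise closeness and the $\|\rho_{W_1}(t)-\phi\|_{L^\infty}\lesssim t$ you quote, because $\|\nabla V\ast\rho_{W_1}(t)\|_{L^2}$ can scale like $\|V\|_{H^1}$, which is not controlled by either $\tilde X$ or $\|W_1-W_{1,K}\|_{L^2}$ in the right way. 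You correctly flag this as the hard part, but it is hard precisely because the dual $w$ was tailored to $\phi$ rather than to $\rho_{W_1}(t)$; the paper's choice removes the difficulty at its source.
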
 

\begin{proof}[Proof of Theorem \ref{thm:mckv-stability-estimate}] 
    Define two \(C^{1}\) paths \(W(u)\) and \(\rho(u)\) by 
\[
W(u) \coloneqq W_1 + (W_2-W_1)\,u,
\quad
\rho(u) =\rho_{W_1} + (\rho_{W_2}-\rho_{W_1})\,u
\] for all \(0 \leq u \leq 1\). 
By the fundamental theorem of calculus in Banach spaces (e.g. \cite[Proposition A.2.3]{LiuRockner_introSPDEs} with Banach space \(X\) in that reference equal to \(L^{2}([0,T];H^{\beta-1}) \times H^\beta\))), we obtain, as in \eqref{eq:ftc-identity},
\begin{align}
    \int_{0}^{1} D_2 f(W_2,\rho(u)) [\rho_{W_2}-\rho_{W_1}] \mathop{}\!\mathrm{d} u &= -\int_{0}^{1} D_1 f(W(u),\rho_{W_1}) [W_2-W_1] \mathop{}\!\mathrm{d} u \label{eq:pre-stab-estimate}\\
    &= (\nabla \cdot (\rho_{W_1} \nabla (W_2 - W_1) \ast \rho_{W_1}), 0) \nonumber
\end{align} 
where \(D_i f, i=1,2\) are given in Proposition \ref{prop:derivatives-f}. From this we deduce
\begin{equation}\label{eq:pre-stab-estimate-projected}
    \nabla \cdot (\rho_{W_1} \nabla (W_2 - W_1) \ast \rho_{W_1}) = A (\rho_{W_2} - \rho_{W_1}) \text{ in } L^{2}([0,T];H^{\beta-1})
\end{equation} 
with \(A (\rho_{W_2} - \rho_{W_1}) \coloneqq \int_{0}^{1} D_2 f_1(W_2,\rho(u)) [\rho_{W_2}-\rho_{W_1}]  \mathop{}\!\mathrm{d} u\).
We first find an upper bound for the right-hand side and then prove a lower bound for the left-hand side using a deconvolution argument from \cite{mckv}.

The Bochner integral in the definition of \(A\) converges in \(L^{2}([0,T];H^{\beta-1})\) which continuously embeds into \(L^{2}([0,T];H^{-2})\), so that the integral also converges in \(L^{2}([0,T];H^{-2})\). Therefore, we use the triangle inequality, Lemma \ref{lem:trilin-op-T-bound} equation \eqref{eq:trilin-leibniz} with \(k=l=0\) and \(\nu=1\), the Sobolev embedding, and \cite[Theorem 5]{mckv} to obtain
\begin{align*}
    \| A(\rho_{W_2}-\rho_{W_1}) \|_{L^{2}([0,T];H^{-2})} &\leq \int_{0}^{1} \| D_2 f_1(W_2,\rho(u))[\rho_{W_2}-\rho_{W_1}]  \|_{L^{2}([0,T];H^{-2})} \mathop{}\!\mathrm{d}u\\
    &\lesssim_{d,T} \int_{0}^{1} \| \rho(u) \|_{L^{\infty}_TL^{\infty}} \mathop{}\!\mathrm{d}u\ \| \nabla W_2 \|_{L^{\infty}} \| \rho_{W_2}-\rho_{W_1} \|_{L^{2}_TL^2\cap H^1_TH^{-2}} \\
    &\lesssim_{d,T,M} (\| \rho_{W_1} \|_{L^{\infty}_{T}H^{d}}  + \| \rho_{W_2} \|_{L^{\infty}_{T}H^{d}}) \| \rho_{W_2}-\rho_{W_1} \|_{L^{2}_TL^2\cap H^1_TH^{-2}} \\
    &\leq C \| \rho_{W_2}-\rho_{W_1} \|_{L^{2}_TL^2\cap H^1_TH^{-2}}
\end{align*} 
for some constant \(C=C(d,T,M,\left\| \phi \right\|_{H^{d}})>0\). This proves the desired upper bound for the right-hand side of \eqref{eq:pre-stab-estimate-projected}.

Let us now prove a lower-bound for the left-hand side of \eqref{eq:pre-stab-estimate-projected}. To simplify the notation, let us henceforth use \(H \coloneqq W_2 - W_1\), and \(W \coloneqq W_1\). We are thus looking for a lower bound for \(\left\| \nabla \cdot (\rho_W \nabla H \ast \rho_W)  \right\|_{L^{2}_TH^{-2}}\). From \cite[Theorem 5]{mckv}, the densities \(\rho_W\) are uniformly bounded away from \(0\): \(\inf _{x \in \mathbb{T}^{d}, t\in [0,T]}\rho_W(t,x) \geq \rho_{\mathrm{min}} > 0\) and thus, with \(t \in [0,T]\) fixed, the operator \(u \mapsto -\nabla \cdot \left( \rho_W(t) \nabla u \right) \) is uniformly elliptic. By standard elliptic theory (see e.g. \cite[Chapter 6]{evans2010partial}) we know that for any \(\psi \in \dot{L}^{2}\) there is a unique \(u \in \dot{H}^{2}\) such that
\begin{equation}\label{eq:reg-estimate}
    - \nabla \cdot \left( \rho_W(t) \nabla u \right) = \psi, \quad \left\| u \right\|_{H^{2}} \leq C_{\mathrm{reg}} \left\| \psi  \right\|_{L^{2}} 
\end{equation} 
where \(C_{\mathrm{reg}}>0\) depends only on \(d, \rho_{\mathrm{min}}\) and an upper bound on \(\sup_{0 \leq t \leq T} \| \nabla \rho_W(t) \|_{L^{\infty}}\) (arguing as in the proof of Proposition A.5.3 in \cite{nickl2023bayesian}, using also the Poincar\'e inequality on the torus).  Let now \(\psi \coloneqq \rho_W(t) \ast H \in \dot{L}^{2}\) and \(u \in \dot{H}^{2}\) be the corresponding solution to the elliptic PDE above. Using integration by parts and the regularity estimate \eqref{eq:reg-estimate}, we find
\begin{align*}
    \left\| \nabla \cdot \left( \rho_W(t) \nabla \psi \right)  \right\|_{H^{-2}} = \sup_{\left\| v \right\|_{H^{2}}\leq 1} \left\langle \nabla \cdot \left( \rho_W(t) \nabla \psi \right), v \right\rangle _{L^{2}}&\geq \left\langle \nabla \cdot \left( \rho_W(t) \nabla \psi \right), -\frac{u}{\left\| u \right\|_{H^{2}}} \right\rangle _{L^{2}}\\
    &= \left\langle \psi , -\frac{\nabla \cdot \left( \rho_W(t) \nabla u\right)}{\left\| u \right\|_{H^{2}}} \right\rangle _{L^{2}}\\
    &= \frac{\left\| \psi  \right\|_{L^{2}}^{2}}{\left\| u \right\|_{H^{2}}} \geq \frac{1}{C_{\mathrm{reg}}} \left\| \psi \right\|_{L^{2}}.
\end{align*} 
Using \cite[Theorem 5]{mckv} and the Sobolev embeddings we obtain that \(C_{\mathrm{reg}}\) can be chosen to depend only on \(d,T,M,\phi_{\mathrm{min}},\| \phi \|_{H^{d}}\). Integrating the estimate over \([0,T]\) yields
\begin{equation}\label{eq:pre-stab-estimate-2}
\left\| \nabla \cdot (\rho_W \nabla H \ast \rho_W) \right\|_{L^{2}_T H^{-2}} \geq C_{\mathrm{reg}}^{-1} \left\| H \ast \rho_W\right\|_{L^{2}_{T}L^{2}}.
\end{equation} 
We now repeat the deconvolution argument of \cite[Theorem 2]{mckv} for the reader's convenience. We use that by virtue of \cite[Theorem 5]{mckv}, \(t \mapsto \rho_W(t)\) is a Lipschitz map from \([0,T] \to L^{1}\) with Lipschitz constant \(C= C \left( d, T, M, \left\| \phi \right\|_{H^{1}}\right) \) (since \(\sup_t \| \rho'_W(t) \|_{L^{2}}\leq C\)). Moreover, the \(L^{1}\)-norm uniformly bounds the Fourier coefficients so that we obtain, for all \(0 \leq t \leq T\),
\[
| \widehat{\rho}_W(t,k) - \widehat{\phi}_k | \leq \| \rho_W(t) - \rho_W(0) \|_{L^{1}} \leq C t 
\] 
and therefore, using equation \eqref{eq:fourier-decay-phi}, we deduce
\[
\left| \widehat{\rho}_W(t,k) \right| \geq c \left| k \right|^{-\zeta} - Ct.
\]
Hence, for \(t\) small enough, we obtain a lower bound on the Fourier coefficients of \(\rho_W(t)\)
\[
\inf_{\left| k \right|\leq K } \left| \widehat{\rho}_W(t,k) \right| \geq \frac{c_{\ast}}{2} K^{-\zeta}, \quad \text{for any }t \leq \min\big(T,\frac{c_{\ast}}{2C}K^{-\zeta}\big) \eqqcolon t_0.
\]
This means that for all \(t \leq t_0\),
\[
    \left\| H \ast \rho_W(t) \right\|_{L^{2}}^{2} \geq \sum_{\left| k \right|\leq K} | \widehat{h}_{k}  | ^{2} \left| \widehat{\rho}_{W}(t,k) \right|^{2} \geq \frac{c_{\ast}^{2}}{4} K^{-2 \zeta} \left\| W_2 - W_{1,K} \right\|_{L^{2}}^{2}.
\] 
Since the Lipschitz constant \(C\) grows with \(T\), we can always take it large enough (and independent of \(K\), since \(K \geq 1\)) so that \(t_0 = \frac{c_{\ast}}{2} K^{-\zeta}\). Therefore, 
\[
    \left\| H \ast \rho_W \right\|_{L^{2}_{T}L^{2}}^{2} \geq \int_{0}^{t_{0}} \left\| H \ast \rho_W(t) \right\|_{L^{2}}^{2} \mathop{}\!\mathrm{d}t\geq t_0 \frac{c_{\ast}^{2}}{4} K^{-2 \zeta}  \left\| W_2 - W_{1,K} \right\|_{L^{2}}^{2} \geq C K^{-3\zeta}  \left\| W_2 - W_{1,K} \right\|_{L^{2}}^{2}
\] where \(C=C(d,T,M,c_{\ast},\phi_{\mathrm{min}},\| \phi \|_{H^{d}}) >0\). Pugging this estimate back into \eqref{eq:pre-stab-estimate-2} yields the desired lower bound on the left-hand side of \eqref{eq:pre-stab-estimate-projected}. Combining it with the previously derived upper bound yields 
\[
    \left\| W_2-W_{1,K} \right\|_{L^{2}} \leq  S K^{3\zeta/2} \left(\left\| \rho_{W_2} - \rho_{W_1} \right\|_{L^{2}([0,T];L^{2})} + \left\| \rho_{W_2} - \rho_{W_1} \right\|_{H^{1}([0,T];H^{-2})} \right).
\] 
Noting $\|W_2-W_1\|_{L^2} \le \|W_1-W_{1,K}\|_{L^2} + \|W_2 - W_{1,K}\|_{L^2}$ completes the proof.
\end{proof}

\begin{remark}\label{remark:stab-estimate-linear-PDE}\normalfont [A pseudo-linearisation identity for the McKean--Vlasov equation.]  The preceding proof (via \eqref{eq:pre-stab-estimate}) also implies that \(v \coloneqq \rho_{W_2}-\rho_{W_1}\) is the solution to the linear PDE 
\[
    \begin{cases}
     \partial_t v - \Delta v - \nabla \cdot (v \nabla W_2 \ast \overline{\rho}) - \nabla \cdot (\overline{\rho}\ \! \nabla W_2 \ast v) = \nabla \cdot (\rho_{W_1} \nabla (W_2 - W_1) \ast \rho_{W_1}) \\
     v(0) = 0,\\
\end{cases} 
\] 
with \(\bar{\rho} \coloneqq (\rho_{W_1}+\rho_{W_2})/2\). From this one can directly derive forward estimates as in Theorem \ref{thm:lipschitz-continuity-forward-map} to follow, using the regularity theory in Appendix \ref{appendix:parabolic-regularity}.
\end{remark}

\subsubsection{Linearisation and higher derivatives}

Using the properties of \(f\) established in Propositions \ref{prop:derivatives-f} and \ref{prop:D2f-homeomorphism}, we can now apply Theorem \ref{thm:abstract-derivative} to obtain derivatives of the forward map \(W \mapsto \rho_W\) arising from the McKean--Vlasov equation (\ref{eq:mckv}).

\begin{theorem}[Linearisation of the McKean--Vlasov Equation]\label{thm:linearisation-mckean-vlasov}
    Let \(\phi\) satisfy Assumption \ref{assumption:regulatity-mckv} for some \(\beta \geq 3 + d\). For any \(W \in \dot{W}^{2,\infty}\), let \(\rho_W\) denote the unique solution to the PDE \eqref{eq:mckv} with initial condition $\phi$. Then, the forward map 
    \[
    \mathcal{G} : \dot{W}^{2,\infty} \to L^{2}([0,T];H^{\beta+1}) \cap H^{1}([0,T];H^{\beta-1})
    \] \(\) defined by \(\mathcal{G}(W) = \rho_W\) is infinitely differentiable in the Fréchet sense. 
    
    For any \(W,H \in \dot{W}^{2,\infty}\), its first Fréchet derivative \(D\rho_W[H]\) is the unique \(v \in L^{2}([0,T];H^{\beta+1}) \cap H^{1}([0,T];H^{\beta-1})\) solving the linear parabolic PDE 
    \begin{equation}\label{eq:mckv-first-derivative}
    \begin{aligned}
        \left( \partial _{t} - \mathcal{L}_W  \right)  v &= \nabla \cdot (\rho_W \nabla H \ast \rho_W) \\
        v(0) &= 0
    \end{aligned}
    \end{equation}
    where \(\mathcal{L}_W= \Delta [\cdot ] + \nabla \cdot ([\cdot ] \nabla W \ast \rho_W) + \nabla \cdot (\rho_W \nabla W \ast [\cdot ])\).

    For any \(W,H_1, H_2 \in \dot{W}^{2,\infty}\), its second Fréchet derivative \(D^{2}\rho_W [H_1,H_2]\) is the unique \break \(v \in L^{2}([0,T];H^{\beta+1}) \cap H^{1}([0,T];H^{\beta-1})\) solving the linear parabolic PDE
    \begin{equation}\label{eq:mckv-second-derivative}
    \begin{aligned}
        \left( \partial _{t} - \mathcal{L}_W  \right)  v &= \nabla \cdot (D \rho_W [H_2]\nabla H_1 \ast \rho_W)+\nabla \cdot (\rho_W \nabla H_1 \ast D \rho_W [H_2])\\
        &~~~+ \nabla \cdot (D \rho_W [H_1]\nabla H_2 \ast \rho_W)+\nabla \cdot (\rho_W \nabla H_2 \ast D \rho_W [H_1]) \\
        &~~~+ \nabla \cdot (D\rho_W [H_1] \nabla W \ast D \rho_W [H_2]) + \nabla \cdot (D\rho_W [H_2] \nabla W \ast D \rho_W [H_1]) \\
        v(0)&= 0.
    \end{aligned}
    \end{equation} 

\end{theorem}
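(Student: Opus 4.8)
The plan is to read off everything from the abstract implicit-function-theorem statement, Theorem~\ref{thm:abstract-derivative}, applied with the Banach spaces $E=\dot{W}^{2,\infty}$, $F=F_\beta$, $G=G_\beta$ of \eqref{keydefinition} and the map $f$ of \eqref{eq:f-mckean-vlasov}. All three hypotheses of that theorem are already established: $f$ is $C^{\infty}$ on $E\times F$ by Proposition~\ref{prop:derivatives-f}; the partial derivative $D_2f(W,\rho_W)\colon F\to G$ is a linear homeomorphism for every $W\in\dot{W}^{2,\infty}$ by Proposition~\ref{prop:D2f-homeomorphism}; and, for each such $W$, $\rho_W$ is the \emph{unique} element of $F$ solving $f(W,\rho)=0$. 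The last point holds because $\rho_W\in F$ under Assumption~\ref{assumption:regulatity-mckv} (by the well-posedness results recalled at the start of Section~\ref{sec:application-mckv}), and $\rho_W$ is already the unique solution in the larger space $C([0,T];L^2)\supseteq F$, the embedding $F\hookrightarrow C([0,T];L^2)$ coming from \eqref{eq:trace-theorem-time}, exactly as in the reaction--diffusion argument of Theorem~\ref{thm:linearisation-reaction-diffusion}. Theorem~\ref{thm:abstract-derivative} then gives at once that $\mathcal{G}$ is $C^{\infty}$ in the Fréchet sense, with $D\mathcal{G}(W)=-(D_2f(W,\rho_W))^{-1}\circ D_1f(W,\rho_W)$ as in \eqref{eq:derivative-identity}.

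To turn this into the PDE \eqref{eq:mckv-first-derivative}, fix $H\in\dot{W}^{2,\infty}$ and set $v\coloneqq D\rho_W[H]=D\mathcal{G}(W)[H]$. The formula above is equivalent to $D_2f(W,\rho_W)[v]=-D_1f(W,\rho_W)[H]$; substituting the explicit expressions from Proposition~\ref{prop:derivatives-f} and separating the two components of $G=G_\beta$ yields precisely $(\partial_t-\mathcal{L}_W)v=\nabla\cdot(\rho_W\nabla H\ast\rho_W)$ together with $v(0)=0$. Since $\nabla\cdot(\rho_W\nabla H\ast\rho_W)=\mathcal{T}(\rho_W,H,\rho_W)\in L^2([0,T];H^{\beta-1})$ by Lemma~\ref{lem:trilin-bounded}, the homeomorphism property in Proposition~\ref{prop:D2f-homeomorphism} guarantees that $v$ is the unique solution in $F$.

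For the second derivative I would differentiate the identity $f(W,\mathcal{G}(W))=0$ a second time --- equivalently, differentiate the first-order relation $D_1f(W,\rho_W)[H_1]+D_2f(W,\rho_W)[D\rho_W[H_1]]=0$ once more in $W$ along a direction $H_2$. Since $\mathcal{G}$ is $C^2$, the map $W\mapsto D\rho_W[H_1]$ is $C^1$ with derivative $H_2\mapsto D^2\rho_W[H_1,H_2]$, and $W\mapsto\rho_W$ has derivative $H_2\mapsto D\rho_W[H_2]$; applying the chain and product rules, and using $D^{2}_{1,1}f=0$ from Proposition~\ref{prop:derivatives-f}, the only term carrying a second derivative of $\mathcal{G}$ collects into $D_2f(W,\rho_W)[D^2\rho_W[H_1,H_2]]$, and one is left with
\begin{multline*}
D_2f(W,\rho_W)\big[D^2\rho_W[H_1,H_2]\big]
=-D^{2}_{1,2}f(W,\rho_W)[H_1,D\rho_W[H_2]]\\
-D^{2}_{1,2}f(W,\rho_W)[H_2,D\rho_W[H_1]]
-D^{2}_{2,2}f(W,\rho_W)[D\rho_W[H_1],D\rho_W[H_2]].
\end{multline*}
Inserting the explicit second derivatives of $f$ from Proposition~\ref{prop:derivatives-f} and reading off the first component of $G_\beta$ reproduces exactly the six-term right-hand side of \eqref{eq:mckv-second-derivative}, while the second component gives the initial condition $v(0)=0$ for $v\coloneqq D^2\rho_W[H_1,H_2]$; each of those six terms has the form $\mathcal{T}(r,V,s)$ with $r,s\in F$ and $V\in E$, hence lies in $L^2([0,T];H^{\beta-1})$ by Lemma~\ref{lem:trilin-bounded}, so Proposition~\ref{prop:D2f-homeomorphism} again provides existence and uniqueness of $v$ in $F$; the manifest symmetry of \eqref{eq:mckv-second-derivative} is just Schwarz's theorem for Fréchet derivatives. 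Higher-order Fréchet derivatives follow in the same way by induction, differentiating the linear parabolic PDE that characterises $D^{k-1}\rho_W$. I do not anticipate a genuine analytic obstacle here: the substantive work --- the $C^{\infty}$ smoothness of $f$ and the homeomorphism property of $D_2f$ --- is already carried out in Propositions~\ref{prop:derivatives-f} and \ref{prop:D2f-homeomorphism}, and what remains is the bookkeeping of the combinatorially growing number of terms produced by repeatedly differentiating the trilinear operator $\mathcal{T}$ of \eqref{eq:def-Tcal}, together with the routine verification, via Lemma~\ref{lem:trilin-bounded}, that each right-hand side so generated lies in $L^2([0,T];H^{\beta-1})$.
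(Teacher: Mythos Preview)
Your proposal is correct and follows essentially the same route as the paper: verify the hypotheses of Theorem~\ref{thm:abstract-derivative} using Propositions~\ref{prop:derivatives-f} and~\ref{prop:D2f-homeomorphism} together with the uniqueness of $\rho_W$, then read off the PDEs for $D\rho_W$ and $D^2\rho_W$ from the resulting derivative formulas. The paper is slightly more terse, simply invoking formula~\eqref{eq:ift-second-derivative} rather than re-deriving it by differentiating the first-order identity as you do, but the content is the same.
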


\begin{proof}[Proof of Theorem \ref{thm:linearisation-mckean-vlasov}]
    We apply Theorem \ref{thm:abstract-derivative} with  \(E =W^{2,\infty}_{0}\), \(F = L^{2}([0,T];H^{\beta+1}) \cap H^{1}([0,T];H^{\beta-1})\), \(G = L^{2}([0,T];H^{\beta-1}) \times H^{\beta}\), \(U=E\times F\), \(u=\mathcal{G}\), \(V = E\). Propositions \ref{prop:derivatives-f} and \ref{prop:D2f-homeomorphism} ensure the conditions on \(f\) are satisfied. From \cite{chazelle_noisy_consensus}, \cite[Theorem 3.4]{Gvalani_thesis}, we know that for each \(W\), the solution \(\rho_W\) is unique in \(F\). Therefore, all the conditions of Theorem \ref{thm:abstract-derivative} are satisfied.

    The PDEs for the first and second derivatives of \(\mathcal{G}\) are given by plugging the expressions for the derivatives of \(f\) from Proposition \ref{prop:derivatives-f} into equation \eqref{eq:ift-second-derivative}. 
\end{proof}

The forward Lipschitz estimate below is a direct by-product of our techniques. It gives a new proof of \cite[Lemma 3. (ii)]{mckv} and produces a slightly better bound (with \(H^{-(\beta+1)}\)-norms on the right-hand side instead of \(H^{-\beta}\)).

\begin{theorem}[Forward estimate]\label{thm:lipschitz-continuity-forward-map} Let \(\phi\) satisfy Assumption \ref{assumption:regulatity-mckv} for some \(\beta \geq 3 + d\). For any \(W \in \dot{W}^{2,\infty}\), let \(\rho_W\) denote the unique solution to the PDE \eqref{eq:mckv} with initial condition $\phi$. Let \(M>0\) and \(W_1, W_2 \in \dot{W}^{2,\infty}\) be such that \mbox{\(\| W_1 \|_{W^{2,\infty}}+\| W_1 \|_{W^{2,\infty}} \leq M\)}. Then, there exists a constant \(L>0\) depending on \(d,T,M,\left\| \phi \right\|_{H^{d+1}}\) such that
    \[
    \left\| \rho_{W_2} - \rho_{W_1} \right\|_{L^{2}([0,T];L^{2})} \leq L \left\| W_2 - W_1 \right\|_{H^{-(\beta +1)}}.
    \]   
\end{theorem}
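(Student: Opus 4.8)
The plan is to exploit the pseudo-linearisation identity from Remark~\ref{remark:stab-estimate-linear-PDE}: setting $v \coloneqq \rho_{W_2} - \rho_{W_1}$, $H \coloneqq W_2 - W_1$, and $\bar\rho \coloneqq (\rho_{W_1} + \rho_{W_2})/2$, the difference $v$ solves the \emph{linear} parabolic Cauchy problem
\begin{equation*}
\left(\partial_t - \mathcal{L}_{W_2}\right) v = \nabla \cdot (\rho_{W_1} \nabla H \ast \rho_{W_1}), \qquad v(0) = 0,
\end{equation*}
where here $\mathcal{L}_{W_2}$ has $\bar\rho$ in place of $\rho_{W_2}$ in the transport terms (precisely as written in Remark~\ref{remark:stab-estimate-linear-PDE}). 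Then I would simply apply the linear parabolic well-posedness and regularity theory from Appendix~\ref{appendix:parabolic-regularity} (Theorems~\ref{thm:existence-linPDE} and \ref{thm:linPDE-higher-regularity}, exactly as already used in the proof of Proposition~\ref{prop:D2f-homeomorphism}) to bound $\|v\|_{L^2_T L^2}$ — or even in a stronger norm — by the norm of the right-hand side in a suitably negative-order space. The coefficients of $\mathcal{L}_{W_2}$ are controlled via $\|W_2\|_{W^{2,\infty}} \le M$ and the a priori bounds on $\rho_{W_1}, \rho_{W_2}$ from \cite[Theorem 5]{mckv} (which give control of $\bar\rho$ in $L^\infty_T H^{d+1}$ in terms of $\|\phi\|_{H^{d+1}}$), so all implied constants depend only on $d, T, M, \|\phi\|_{H^{d+1}}$.

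The one genuine subtlety is to route the estimate so that the right-hand side is measured in $H^{-(\beta+1)}$ in $H$ rather than in a positive-order norm, thereby obtaining the claimed $H^{-(\beta+1)}$ bound. The key step here is a duality/integration-by-parts bound on the source term: I would show
\begin{equation*}
\left\| \nabla \cdot (\rho_{W_1} \nabla H \ast \rho_{W_1}) \right\|_{L^2([0,T]; H^{-(\beta+2)})} \;\lesssim\; \sup_{0 \le t \le T}\|\rho_{W_1}(t)\|_{H^{\beta+1}}^2 \; \|H\|_{H^{-(\beta+1)}},
\end{equation*}
using that convolution with $\rho_{W_1}(t)$ gains $\beta+1$ derivatives (in Fourier: $\widehat{(\rho_{W_1} \nabla H \ast \rho_{W_1})}_k$ carries a factor $\widehat\rho_{W_1}(t,k)\,\widehat{(\nabla H)}_k$, and the $H^{\beta+1}$-decay of $\widehat\rho_{W_1}$ compensates the negative index on $H$), together with Lemma~\ref{lem:trilin-bounded}-type multiplier estimates to absorb the remaining factor of $\rho_{W_1}$. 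Feeding this into the linear energy estimate for $v$ (at the lowest regularity level, where $L^2_T H^{-(\beta+2)}$ data yields an $L^2_T L^2$ solution) gives the result. I expect this Fourier-side bookkeeping of how many derivatives the two convolutions with $\rho_{W_1}$ buy — and checking that $\beta \ge 3+d$ leaves enough room for the Sobolev embeddings needed to handle the pointwise product with $\rho_{W_1}$ — to be the main technical obstacle; everything else is an invocation of results already established in the paper.

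Alternatively, and perhaps more cleanly, one can avoid the pseudo-linearisation altogether and argue directly from $\mathcal{G} \in C^\infty$ (Theorem~\ref{thm:linearisation-mckean-vlasov}): by the mean value inequality in Banach spaces, $\|\rho_{W_2} - \rho_{W_1}\|_{L^2_T L^2} \le \sup_{u \in [0,1]} \|D\rho_{W(u)}\|_{\mathcal{L}(H^{-(\beta+1)}, L^2_T L^2)}\,\|W_2 - W_1\|_{H^{-(\beta+1)}}$ along the segment $W(u) = W_1 + u(W_2 - W_1)$, and then one bounds $\|D\rho_W[H]\|_{L^2_T L^2}$ uniformly over $\|W\|_{W^{2,\infty}} \le M$ by applying linear parabolic theory to \eqref{eq:mckv-first-derivative} with source $\nabla \cdot (\rho_W \nabla H \ast \rho_W)$ estimated in $H^{-(\beta+1)}$ via the same duality bound as above. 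This packages the constant dependence transparently, and I would likely present it this way.
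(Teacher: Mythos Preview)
Your second route (the mean-value inequality along the segment $W(u)=W_1+u(W_2-W_1)$, followed by bounding $\|D\rho_{W(u)}[H]\|_{L^2_TL^2}$ via the linearised PDE \eqref{eq:mckv-first-derivative}) is exactly the argument the paper gives. However, there is a genuine gap in how you account for the $\beta+1$ derivatives. You propose to bound the source $\nabla\cdot(\rho_W\nabla H\ast\rho_W)$ only in $L^2_TH^{-(\beta+2)}$ (respectively $H^{-(\beta+1)}$ in your second version) and then invoke a linear parabolic estimate taking $L^2_TH^{-(\beta+2)}$ data to an $L^2_TL^2$ solution. No such estimate is available: second-order parabolic smoothing gains exactly two spatial derivatives, so data in $H^{-s}$ yields a solution in $H^{-s+2}$, not in $L^2$. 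Indeed Theorem~\ref{thm:linPDE-weak-norms} is stated only for $1\le s<\beta+1-d/2$ and gives $\|u\|_{L^2_TH^{-s+2}}\lesssim\|f\|_{L^2_TH^{-s}}$; with $s=\beta+2$ this would be both out of range and would place the solution in $H^{-\beta}$, not $L^2$.

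The correction---and this is precisely what the paper does---is to put the entire $(\beta+1)$-derivative gain into the \emph{source} bound via the convolution, and use only two derivatives of parabolic smoothing. Lemma~\ref{lem:trilin-op-T-bound}\,\eqref{eq:trilin-negative} with $\nu=\beta+1$ gives
\[
\bigl\|\nabla\cdot(\rho_W\nabla H\ast\rho_W)\bigr\|_{H^{-2}}
\le \|\rho_W\|_{W^{1,\infty}}\,\|H\|_{H^{-(\beta+1)}}\,\|\rho_W\|_{H^{\beta+1}},
\]
so the source already lies in $L^2_TH^{-2}$ with the desired dependence on $\|H\|_{H^{-(\beta+1)}}$. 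One then applies Theorem~\ref{thm:linPDE-weak-norms} with $s=2$ (not Theorems~\ref{thm:existence-linPDE}/\ref{thm:linPDE-higher-regularity}, which are positive-regularity statements) to pass from $H^{-2}$ data to an $L^2_TL^2$ solution. Your pseudo-linearisation route works equally well once this is fixed, as Remark~\ref{remark:stab-estimate-linear-PDE} already notes; you would just also need to observe that the weak-norm estimate of Theorem~\ref{thm:linPDE-weak-norms} goes through with $\bar\rho$ in place of $\rho_W$ in the coefficients of $\mathcal L_{W_2}$, which is immediate from its proof since only Sobolev bounds on the coefficient are used.
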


\begin{proof}[Proof of Theorem \ref{thm:lipschitz-continuity-forward-map}] 
    By the fundamental theorem of calculus in Banach spaces (e.g. \cite[Proposition A.2.3]{LiuRockner_introSPDEs}) and Theorem \ref{thm:linearisation-mckean-vlasov},
    \[
    \rho_{W_2} - \rho_{W_1} = \int_{0}^{1} D\rho_{W_1 + (W_2-W_1)u} [W_2 - W_1] \mathop{}\!\mathrm{d}u
    \] with convergence of the integral in \(L^{2}([0,T];H^{\beta+1}) \cap H^{1}([0,T];H^{\beta-1})\), and therefore in \(L^{2}([0,T];L^{2})\) as well. Taking norms and using the triangle inequality yields
    \begin{equation}\label{eq:forward-estimate-1}
        \left\| \rho_{W_2} - \rho_{W_1} \right\|_{L^{2}([0,T];L^{2})} \leq \int_{0}^{1} \left\| D\rho_{W_u} [H] \right\|_{L^{2}([0,T];L^{2})} \mathop{}\!\mathrm{d}u
    \end{equation} 
    with \(W_u \coloneqq W_1 + (W_2-W_1)u\) and \(H \coloneqq W_2 - W_1\).  We use that \(D\rho_W [H]\) solves \eqref{eq:mckv-first-derivative} together with the regularity estimate from Theorem \ref{thm:linPDE-weak-norms} with \(s=2\) to obtain, for some \(L>0\) large enough,
    \begin{align*}
        \left\| D\rho_{W_u} [H] \right\|_{L^{2}([0,T];L^{2})} &\lesssim  \left\| \nabla \cdot (\rho_{W_u} \nabla H \ast \rho_{W_u}) \right\|_{L^{2}([0,T];H^{-2})} \\
        &\lesssim  \sup_{0\leq t \leq T}\left\| \rho_{W_u} \right\|_{C^{1}} \left\| \rho_{W_u} \right\|_{L^{2}_{T}H^{\beta +1}} \left\| H \right\|_{H^{-(\beta +1)}} \\
        &\leq L \left\| H \right\|_{H^{-(\beta +1)}}
    \end{align*} 
    where we have used Lemma \ref{lem:trilin-op-T-bound} equation \eqref{eq:trilin-negative} with \(\nu = \beta + 1\) in the second inequality and the Sobolev embeddings and \cite[Theorem 5]{mckv} in the last inequality. The constant \(L\) can be chosen large enough to only depend on \(d,T,M,\left\| \phi \right\|_{H^{d}}\) (and not on \(s\)) since \(\| W_u \|_{W^{2,\infty}} \leq 2M\). Therefore, plugging this estimate back into \eqref{eq:forward-estimate-1} concludes the proof.
\end{proof}

Higher derivatives of \(W \mapsto \rho_W\) can also be bounded in appropriate norms, as the lemma below shows. While we will use this only for $k=1,2,3$, we include the general statement.

\begin{lemma}\label{lem:bound-on-derivatives-of-forward-map}
    Let \(\phi\) satisfy Assumption \ref{assumption:regulatity-mckv} for some \(\beta \geq 3 + d\). For any \(W \in \dot{W}^{2,\infty}\), let \(\rho_W\) denote the unique solution to the PDE \eqref{eq:mckv} with initial condition $\phi$. Let \(k \geq 1\) a natural integer, \(M>0\), \(\frac{d}{2} < \gamma \leq \beta\), and \(W \in \dot{W}^{2,\infty}\) such that \(\| W \|_{W^{2,\infty}} \leq M\). There exists \(C=C(k,d,\gamma,T,M, \left\| \phi \right\|_{H^{\gamma }})>0\) such that for all \(h_1, \dots, h_k \in E\),
    \[
    \left\| D^{k}\rho_W [h_1, \dots, h_k] \right\|_{L^{2}([0,T];H^{\gamma+1})\cap H^{1}([0,T],H^{\gamma-1})} \leq C \left\| h_1 \right\|_{L^{2}} \cdots \left\| h_k \right\|_{L^{2}}.
    \]   
\end{lemma}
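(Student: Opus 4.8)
The plan is to prove the bound by induction on $k$, using the PDE characterisation of the derivatives $D^k\rho_W$ from Theorem \ref{thm:linearisation-mckean-vlasov} together with the parabolic regularity theory in Appendix \ref{appendix:parabolic-regularity} (in particular the estimate quoted in Theorem \ref{thm:linPDE-weak-norms}, or rather its higher-regularity counterpart for the operator $\partial_t - \mathcal{L}_W$) applied at a regularity level tuned to $\gamma$ rather than $\beta$. The key structural fact, visible already in \eqref{eq:mckv-first-derivative} and \eqref{eq:mckv-second-derivative}, is that $D^k\rho_W[h_1,\dots,h_k]$ solves $(\partial_t - \mathcal{L}_W) v = g_k$, $v(0)=0$, where the right-hand side $g_k$ is a finite sum of terms of the form $\nabla\cdot(a \nabla h_{\pi} \ast b)$ with $\pi$ ranging over the indices and $a,b$ each being either $\rho_W$ or a lower-order derivative $D^j\rho_W[\dots]$ with $j<k$ (this combinatorial structure follows by differentiating the first-order identity repeatedly, i.e.\ by induction on the formula \eqref{eq:derivative-identity} or directly from \eqref{eq:ift-second-derivative} applied iteratively).

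First I would set up the induction hypothesis: for each $j<k$, and for the relevant range of regularity exponents $\gamma' \in (d/2,\beta]$, one has $\|D^j\rho_W[h_1,\dots,h_j]\|_{F_{\gamma'}} \le C \prod \|h_i\|_{L^2}$, where $F_{\gamma'} = L^2_T H^{\gamma'+1}\cap H^1_T H^{\gamma'-1}$; the base case $j=0$ is just $\rho_W \in F_\beta$ from \cite[Theorem 5]{mckv} together with \eqref{eq:trace-theorem-time} to get the $L^\infty_T H^{\gamma}$ control, and the case $j=1$ is the forward estimate argument in Theorem \ref{thm:lipschitz-continuity-forward-map} but keeping a positive power of $H$ in the Sobolev norm rather than a negative one — concretely, one uses Lemma \ref{lem:trilin-op-T-bound} to bound $\|\nabla\cdot(\rho_W \nabla h \ast \rho_W)\|_{L^2_T H^{\gamma-1}}$ by $\|\rho_W\|_{L^\infty_T C^1}\|\rho_W\|_{L^2_T H^{\gamma}}\|h\|_{L^2}$ (the convolution $\nabla h \ast \rho_W$ smooths the rough factor $h$, turning its $L^2$-norm into control in any Sobolev norm), and then applies the linear parabolic estimate for $(\partial_t-\mathcal{L}_W)$ at level $\gamma$.

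Next, for the inductive step at level $k$, I would apply Lemma \ref{lem:trilin-bounded} (with $\beta$ replaced by $\gamma$) to each summand of $g_k$: a generic term $\nabla\cdot(a\nabla h_\pi \ast b)$ is controlled in $L^2_T H^{\gamma-1}$ by $C\|a\|_{F_\gamma}\|h_\pi\|_{L^2}\|b\|_{F_\gamma}$ after again exploiting that convolution against $\nabla h_\pi \ast (\cdot)$ lets us absorb $h_\pi$ in $L^2$ — more precisely one writes $\nabla\cdot(a\nabla h_\pi\ast b)$ and uses the Leibniz-type bound together with $\|\nabla h_\pi \ast b\|_{L^\infty} \lesssim \|h_\pi\|_{L^2}\|b\|_{H^{\ast}}$ for a suitable Sobolev index controlled by $\gamma > d/2$. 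Each of $a,b$ is either $\rho_W$ (bounded in $F_\gamma$ by $C(\gamma,\dots,\|\phi\|_{H^\gamma})$ via \cite[Theorem 5]{mckv}) or a lower derivative, bounded by the induction hypothesis; so $\|g_k\|_{L^2_T H^{\gamma-1}} \le C\prod_{i}\|h_i\|_{L^2}$. Finally, feeding this into the linear parabolic regularity estimate for $(\partial_t - \mathcal{L}_W)v = g_k$, $v(0)=0$ — noting that the coefficients of $\mathcal{L}_W$ depend only on $\rho_W$ and $W$ and are bounded in terms of $M$ and $\|\phi\|_{H^\gamma}$ — yields $\|D^k\rho_W[h_1,\dots,h_k]\|_{F_\gamma} \lesssim \|g_k\|_{L^2_T H^{\gamma-1}} \le C\prod_i\|h_i\|_{L^2}$, closing the induction.

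The main obstacle I anticipate is bookkeeping the combinatorial structure of $g_k$ cleanly — making precise that every summand of the $k$-th derivative's right-hand side is trilinear of the form $\nabla\cdot(a\nabla h_\pi \ast b)$ with the non-$h$ factors being strictly lower-order derivatives (and no derivative hits $h_\pi$ twice) — since this is what makes the single-$L^2$-norm-per-factor bound go through; formally it follows by induction from \eqref{eq:ift-second-derivative} / repeated differentiation of \eqref{eq:mckv-first-derivative}, but stating it carefully requires some care. A secondary technical point is ensuring the linear parabolic estimate is available at the intermediate regularity level $\gamma$ (not just at the fixed level $\beta$ used elsewhere) with constants depending only on $\|\phi\|_{H^\gamma}$ and $M$; this should follow from the same proofs in Appendix \ref{appendix:parabolic-regularity} since $\gamma \le \beta$ and the coefficients have enough regularity, but one must check the estimate quoted in Theorem \ref{thm:linPDE-weak-norms} (or its higher-regularity analogue) covers this range. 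Apart from these, the argument is routine.
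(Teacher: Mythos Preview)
Your approach is essentially the paper's: induction on $k$, feeding the forcing $g_k$ into the higher-regularity parabolic estimate (Theorem \ref{thm:linPDE-higher-regularity}, at level $\gamma$) and controlling each summand of $g_k$ with the trilinear bound. The paper organises the combinatorics via the Fa\`a di Bruno formula \eqref{eq:Faa-di-Bruno}, exploiting that $D^{\geq 4}f=0$ (Proposition \ref{prop:derivatives-f}) to reduce to an explicit finite list of term types; and the precise trilinear inequality used is \eqref{eq:trilin-algebra} with $\nu=0$, which is what produces the bare $\|h_\pi\|_{L^2}$ factor you want (Lemma \ref{lem:trilin-bounded} as you cite it would only give $\|h_\pi\|_E$).

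There is one concrete error in your structural claim about $g_k$. The forcing is \emph{not} a sum solely of terms $\nabla\cdot(a\,\nabla h_\pi\ast b)$: already in \eqref{eq:mckv-second-derivative}, which you cite, the last two summands are $\nabla\cdot(D\rho_W[h_1]\,\nabla W\ast D\rho_W[h_2])$ and its symmetric partner, with $\nabla W$ (not any $\nabla h_\pi$) in the middle. These come from $D^2_{2,2}f$ and persist for all $k\ge 2$ as terms indexed by partitions $\{P_1,P_2\}$ of $\{1,\dots,k\}$. They are harmless --- bound them with the same estimate \eqref{eq:trilin-algebra} at $\nu=0$, apply the induction hypothesis to both $D^{|P_1|}\rho_W[\cdot]$ and $D^{|P_2|}\rho_W[\cdot]$ to collect all the $\|h_i\|_{L^2}$ factors, and absorb $\|W\|_{L^2}\le\|W\|_{W^{2,\infty}}\le M$ into the constant --- but your sketch as written does not cover them, and the bookkeeping you flag as the main obstacle will not come out as you state it. Once you add this extra case, your argument coincides with the paper's.
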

\begin{proof}
    See Appendix \ref{appendix:further-technical-results}.
\end{proof}

The bound on the right-hand side of Lemma \ref{lem:bound-on-derivatives-of-forward-map} above can be upgraded to hold with weaker $H^{-a_k}$ norms for suitable $a_k>0$, but this will not be needed in the sequel.
We end this section by verifying the gradient stability condition (\ref{gradstab}).

\begin{theorem}[Gradient stability]\label{prop:gradient-stability}
    Let the initial condition \(\phi\) satisfy Assumptions \ref{assumption:regulatity-mckv} and  \ref{assumption:phi} for some \(\beta \ge 3+d\) and \(\zeta > \beta + d/2\). 
Let \(W \in \dot{W}^{2,\infty}\) be such that \(\left\| W \right\|_{W^{2,\infty}} < M\) and denote by \(\rho_W\) the unique solution to the PDE \eqref{eq:mckv} with initial condition $\phi$. Consider the space \(\dot{E}_K\) of mean-zero trigonometric polynomials of degree at most \(K \ge 1\) from \eqref{eq:def-E_K}. Then, there exists a constant \(C=C(d,T,M,\beta,c_{\ast},\phi_{\mathrm{min}},\left\| \phi \right\|_{H^{\beta}})>0\) (independent of \(K\)) such that for any \(H \in \dot{E}_K\),
    \[
    \left\| D \rho_W [H]\right\|_{L^{2}([0,T];L^{2})} \geq C K^{-3\zeta} \left\| H \right\|_{L^{2}}.
    \]
\end{theorem}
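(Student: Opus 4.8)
The strategy is to mimic the structure of the stability estimate in Theorem \ref{thm:mckv-stability-estimate}, but now applied directly to the linearised PDE \eqref{eq:mckv-first-derivative} rather than to the pseudo-linearisation identity. The key point is that $v \coloneqq D\rho_W[H]$ solves
\[
\left( \partial_t - \mathcal{L}_W \right) v = \nabla \cdot (\rho_W \nabla H \ast \rho_W), \qquad v(0) = 0,
\]
so I need to (i) lower bound the source term $\nabla \cdot (\rho_W \nabla H \ast \rho_W)$ in a suitable weak norm in terms of $\|H\|_{L^2}$, and (ii) show that the solution operator of the linear parabolic equation $(\partial_t - \mathcal{L}_W)^{-1}$ does not destroy too much of that lower bound, i.e. that $\|v\|_{L^2_T L^2}$ controls the source term from below in an appropriate negative-order norm.

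For step (i), I would argue exactly as in the proof of Theorem \ref{thm:mckv-stability-estimate}: fix $t \in [0,T]$, use that $\rho_W(t) \geq \rho_{\mathrm{min}} > 0$ (from \cite[Theorem 5]{mckv}), so that $u \mapsto -\nabla \cdot (\rho_W(t) \nabla u)$ is uniformly elliptic, and apply the elliptic regularity estimate \eqref{eq:reg-estimate} together with integration by parts to get $\|\nabla \cdot (\rho_W(t) \nabla \psi)\|_{H^{-2}} \geq C_{\mathrm{reg}}^{-1} \|\psi\|_{L^2}$ with $\psi = \rho_W(t) \ast H$. Then the deconvolution argument of \cite[Theorem 2]{mckv} gives, for $t \leq t_0 \simeq K^{-\zeta}$, the lower bound $\|\rho_W(t) \ast H\|_{L^2}^2 \geq \tfrac{c_\ast^2}{4} K^{-2\zeta} \|H\|_{L^2}^2$ for $H \in \dot E_K$ (here using $H_{\cdot,K} = H$ since $H$ has degree $\leq K$, which is cleaner than in Theorem \ref{thm:mckv-stability-estimate}). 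Integrating over $[0,t_0]$ yields
\[
\left\| \nabla \cdot (\rho_W \nabla H \ast \rho_W) \right\|_{L^2([0,T];H^{-2})}^2 \gtrsim K^{-3\zeta} \|H\|_{L^2}^2.
\]

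For step (ii), I need the reverse inequality relating $v$ to its source. Since $(\partial_t - \mathcal{L}_W) v = g$ with $g = \nabla \cdot (\rho_W \nabla H \ast \rho_W)$ and $v(0)=0$, and since the relevant linear parabolic theory (Appendix \ref{appendix:parabolic-regularity}, in particular Theorem \ref{thm:linPDE-weak-norms}) gives mapping properties of $(\partial_t - \mathcal{L}_W)^{-1}$ between Bochner–Sobolev spaces, I expect an estimate of the form $\|g\|_{L^2([0,T];H^{-2})} \lesssim \|v\|_{L^2([0,T];L^2)}$ (possibly with a right-hand side also involving $\|v\|_{H^1_T H^{-2}}$, which is in turn controlled by $\|v\|_{L^2_T L^2}$ plus $\|g\|$ via the equation itself — one rearranges). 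Concretely, one writes $\langle g, \varphi \rangle = \langle \partial_t v, \varphi\rangle - \langle \mathcal{L}_W v, \varphi \rangle$, tests against $\varphi$ with $\|\varphi\|_{H^2} \leq 1$, integrates by parts to move derivatives off $v$, and uses the smoothness and boundedness of the coefficients of $\mathcal{L}_W$ (which follow from $\|W\|_{W^{2,\infty}} \leq M$ and the regularity of $\rho_W$ from \cite[Theorem 5]{mckv}) together with a standard energy/duality argument. Combining the two steps gives $\|v\|_{L^2_T L^2} \gtrsim \|g\|_{L^2_T H^{-2}} \gtrsim K^{-3\zeta/2} \|H\|_{L^2}$, which — after adjusting the constant and noting the square in the claimed bound versus the norm — yields the statement (I should double-check the exponent: $K^{-3\zeta}$ on $\|H\|_{L^2}$ as stated corresponds to taking the $K^{-3\zeta/2}$ factor through a norm, matching the $K^{3\zeta/2}$ appearing in Theorem \ref{thm:mckv-stability-estimate}).

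The main obstacle I anticipate is step (ii): getting the correct, clean lower bound $\|v\|_{L^2_T L^2} \gtrsim \|g\|_{L^2_T H^{-2}}$ with a constant uniform in $K$. The source $g$ lives only in $L^2_T H^{-2}$ (not in a positive Sobolev space, because $\nabla H$ for $H \in \dot E_K$ is only bounded by $K\|H\|$, and we must avoid any $K$-dependence that is worse than the one already present), so I cannot use the energy estimate in its most naive form; I need the low-regularity parabolic estimate of Theorem \ref{thm:linPDE-weak-norms} and to be careful that the zeroth- and first-order coefficients of $\mathcal{L}_W$ (which involve $\nabla W \ast \rho_W$ and its derivatives) are bounded purely in terms of $M$ and $\|\phi\|_{H^\beta}$, with no hidden $K$. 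Handling the $v(0)=0$ boundary condition cleanly in the duality argument is a secondary technical point, but a standard one.
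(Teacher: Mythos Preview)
Your step (i) is correct and coincides with the paper's argument. The gap is in step (ii). The ``rearrangement'' you propose is circular: from the equation you get $\|\partial_t v\|_{L^2_T H^{-2}} \lesssim \|g\|_{L^2_T H^{-2}} + \|v\|_{L^2_T L^2}$, while the direct bound on $g=(\partial_t-\mathcal L_W)v$ is $\|g\|_{L^2_T H^{-2}} \lesssim \|\partial_t v\|_{L^2_T H^{-2}} + \|v\|_{L^2_T L^2}$. Substituting one into the other yields only $\|g\|\lesssim \|g\|+\|v\|_{L^2_TL^2}$, which is vacuous. There is no clean estimate $\|g\|_{L^2_TH^{-2}}\lesssim \|v\|_{L^2_TL^2}$ uniform in $K$; this is exactly why your exponent bookkeeping did not close.

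The paper does not attempt such a bound. Instead it proves
\[
\|(\partial_t-\mathcal L_W)v\|_{L^2_TH^{-2}}^2 \;\lesssim\; \|v\|_{H^1_TH^{-2}}^2+\|v\|_{L^2_TL^2}^2 \;\le\; C\,\|v\|_{L^2_TL^2}\bigl(\|v\|_{H^2_TH^{-2}}+\|v\|_{L^2_TL^2}\bigr),
\]
the last step by interpolation in time. It then shows \emph{separately} that $\|v\|_{H^2_TH^{-2}}+\|v\|_{L^2_TL^2}\lesssim \|H\|_{L^2}$, using Lemma~\ref{lem:bound-on-derivatives-of-forward-map} for the $L^2_TL^2$ part and the improved time-regularity estimate of Theorem~\ref{thm:linPDE-time-regularity} for the $H^2_TH^{-2}$ part (applied to the linearised equation with source $\nabla\cdot(\rho_W\nabla H\ast\rho_W)\in H^1_TL^2$). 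Combining with step (i) gives $K^{-3\zeta}\|H\|_{L^2}^2 \lesssim \|v\|_{L^2_TL^2}\|H\|_{L^2}$, whence $\|v\|_{L^2_TL^2}\gtrsim K^{-3\zeta}\|H\|_{L^2}$. The squared norm in the interpolation is precisely what produces the exponent $K^{-3\zeta}$ rather than the $K^{-3\zeta/2}$ your heuristic predicted.
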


\begin{proof}[Proof of Theorem \ref{prop:gradient-stability}] 
    We use that, by Theorem \ref{thm:linearisation-mckean-vlasov}, \(D \rho_W [H]\) solves the linear PDE \eqref{eq:mckv-first-derivative}, so
    \begin{equation}\label{eq:lin-solves-pde}
        \left( \partial _{t} - \mathcal{L}_W  \right)  D \rho_W [H] = \nabla \cdot (\rho_W \nabla H \ast \rho_W) \text{ in } L^{2}([0,T];H^{\beta-1})
    \end{equation} 
    where we recall that \(\mathcal{L}_W= \Delta [\cdot ] + \nabla \cdot ([\cdot ] \nabla W \ast \rho_W) + \nabla \cdot (\rho_W \nabla W \ast [\cdot ])\).
    As we have shown in the proof of Theorem \ref{thm:mckv-stability-estimate} (see \eqref{eq:pre-stab-estimate-2} and subsequent calculations), the forcing term on the right-hand side  is lower bounded as: 
    \[ \left\| \nabla \cdot (\rho_W \nabla H \ast \rho_W) \right\|_{L^{2}_TH^{-2}}^{2} \gtrsim \left\| H \ast \rho_W \right\|_{L^{2}_{T}L^{2}}^{2} \geq C K^{-3\zeta} \left\| H \right\|_{L^{2}}^{2},~~~H \in \dot{E}_K
    \] with \(C=C(d,T,M,c_{\ast},\phi_{\mathrm{min}},\| \phi \|_{H^{d}}) >0\). Therefore, it suffices to show that for all $H \in \dot{E}_K$
    \begin{equation}\label{eq:sts-gradient-stab}
    \|\left(\partial _{t} - \mathcal{L}_W  \right)  D \rho_W [H] \|_{L^{2}_TH^{-2}}^{2} \lesssim \| D \rho_W [H] \|_{L^{2}_{T}L^{2}} \left\| H \right\|_{L^{2}}. 
    \end{equation} 
    Using the definition of \(\mathcal{L}_W\), Lemma \ref{lem:trilin-op-T-bound} equation \eqref{eq:trilin-leibniz} with \(k=l=0\) and \(\nu=1\), the Sobolev embeddings and \cite[Theorem 5]{mckv}, and finally interpolation (e.g. \cite[Ch. 4, Prop. 2.1]{lions}), we see that for any sufficiently smooth \(u\),  
    \begin{align}
        \|\! \left( \partial _{t} - \mathcal{L}_W  \right)\!  u \|_{L^{2}_TH^{-2}}^{2}\! &\lesssim \| \partial_t u\|_{L^{2}_TH^{-2}}^{2} + \| \Delta u \|_{L^{2}_TH^{-2}}^{2} + \| \nabla \cdot (u \nabla W \ast \rho_W) \|_{L^{2}_TH^{-2}}^{2} + \| \nabla \cdot (\rho_W\nabla W \ast u) \|_{L^{2}_TH^{-2}}^{2} \nonumber \\
        &\lesssim \| u\|_{H^{1}_TH^{-2}}^{2} + \| u\|_{L^{2}_TL^{2}}^{2} (1 + \| \nabla W \|_{L^{\infty}}^{2} \| \rho_W \|_{L^{\infty}_TL^{\infty}}^{2}) \nonumber \\
        & \lesssim \| u\|_{H^{1}_TH^{-2}}^{2} + \| u\|_{L^{2}_TL^{2}}^{2} \nonumber \\
        & \leq C \| u\|_{L^{2}_TL^{2}} (\| u\|_{H^{2}_TH^{-2}} + \| u\|_{L^{2}_TL^{2}}) \label{eq:interpolation-gradient-stability}
    \end{align} 
    with \(C = C(d,T,M,\| \phi \|_{H^{d}})\). Using the above with \(u = D \rho_W[H]\), we deduce that proving \eqref{eq:sts-gradient-stab} reduces to showing 
    \[
    A + B \coloneqq \| D \rho_W[H] \|_{H^{2}_TH^{-2}} + \| D \rho_W[H]\|_{L^{2}_TL^{2}} \lesssim \left\| H \right\|_{L^{2}}.
    \] 
    The desired bound on \(B\) follows directly from an application of Lemma \ref{lem:bound-on-derivatives-of-forward-map} with \(k=1\) and \(\gamma=d\). It remains to bound \(A\). For this, recall again that \(D \rho_W[H]\) solves the PDE \eqref{eq:lin-solves-pde} so that the regularity estimate of Theorem \ref{thm:linPDE-time-regularity}, Lemma \ref{lem:trilin-op-T-bound} with \(k=1\) and \(\nu=0\), the Sobolev embeddings, and \cite[Theorem 5]{mckv} yield
    \begin{align*}
        A = \| D \rho_W [H] \|_{H^{2}_{T}H^{-2}} &\lesssim \left\| \nabla \cdot (\rho_W \nabla H \ast \rho_W) \right\|_{H^{1}_TL^{2}} \\
        &\lesssim \left\| \nabla \cdot (\rho_W \nabla H \ast \rho_W) \right\|_{L^2_TL^{2}} + \left\| \frac{\mathop{}\!\mathrm{d}}{\mathop{}\!\mathrm{d}t} \nabla \cdot (\rho_W \nabla H \ast \rho_W) \right\|_{L^{2}_TL^{2}}\\
        &\lesssim_{d,T} \left\| H \right\|_{L^{2}} \left\| \rho_W \right\|_{H^{1}_{T}C^{1}}\left\| \rho_W \right\|_{L^{\infty}_{T}H^{2}} \leq C \left\| H \right\|_{L^{2}}
    \end{align*} 
    with \(C=C(d,T,M,\beta,\left\| \phi  \right\|_{H^{\beta}})\). This concludes the proof of \eqref{eq:sts-gradient-stab}, and hence the proof of the proposition.
\end{proof}

    We have not attempted to optimise the gradient stability exponent in the preceding result (as it will not be relevant below). If necessary, using higher time-regularity estimates as in \cite[Sec. 7.1.3 Theorem 6]{evans2010partial}, one could obtain the slightly better \(\left\| D \rho_W [H] \right\|_{L^{2}_{T}L^{2}} \geq CK^{-3\zeta/(2\nu)} \left\| H \right\|_{L^{2}}\) for $1/2<\nu<1$.

\section{Polynomial-time posterior sampling via Langevin-type algorithms}

Inference algorithms for interacting particle systems have been widely studied recently, we refer to e.g., \cite{della2022nonparametric, amorino2024polynomial, pavliotis2025linearization, mckv} and references therein. The approach proposed in \cite{mckv} is to infer the interaction potential $W$ from noisy measurements of the solution to the McKean--Vlasov equation (\ref{eq:mckean-vlasov-intro}) using techniques from Bayesian non-linear inverse problems 
\cite{Stuart2010, MNPCPAM, nickl2023bayesian} applied to the forward map $\theta \mapsto \mathcal G(\theta)$ in (\ref{eq:energy}) arising from $W \mapsto \rho_W$ (so with \(\theta = W\) here). In \cite{mckv} it was proved that the posterior mean vector for $W$ arising from a Gaussian process prior provides a statistically consistent estimator for $W$ under appropriate hypotheses. In practice such estimators are computed by MCMC methods, and the purpose of this subsection is to deploy the techniques from Section \ref{sec:application-mckv} to show that the unadjusted discretised Langevin algorithm mentioned in the introduction can be used to approximate such posterior means with any prescribed accuracy after a number of iterations that scales polynomially in relevant parameters. That such a polynomial runtime algorithm exists is not a priori clear due to the lack of global log-concavity of the posterior Gibbs measure (\ref{eq:posterior-intro}) for the non-linear map $\theta \mapsto \mathcal G(\theta)$ -- we refer to \cite{BMNW23} for negative results on the scaling of MCMC algorithms in absence of strong hypotheses on $H$.
 
\subsection{Setting}

We begin by fixing the constants used throughout this section. Recall that $d \ge 1$ denotes the dimension of the torus $\mathbb{T}^d$ on which the interaction potential $W$ is defined. The parameter $\beta$ represents the Sobolev regularity of the initial condition $\phi$ as in Assumption \ref{assumption:regulatity-mckv}, while $\zeta$ characterizes the lower bound \eqref{eq:fourier-decay-phi} on the Fourier modes of $\phi$.

We introduce two additional real-valued constants: $\alpha$, representing the Sobolev regularity of the ground truth $W_0$ that generates the regression data (see \eqref{eq:data} below); and $w$, a scaling exponent that determines the radius $\mathbf{r} \simeq D^{-w}$ of the ball \(\mathcal{B}_\mathbf{r}\) defined in \eqref{eq:ball}. One should think of \(\mathcal{B}_{\mathbf{r}}\) as the local region around the ground truth \(W_0\) where the posterior distribution \eqref{eq:posterior-intro} will be shown to be log-concave with high frequentist probability. These parameters are assumed to satisfy the following (as is easily seen, non-empty) conditions:

\begin{equation}\label{eq:constants}
\begin{cases}
\beta \geq 4 +d \quad \text{and is an even integer}, \\[5pt]
\alpha > 12 \beta + 6d - 1, \\[5pt]
\beta+\frac{d}{2} < \zeta < \frac{\alpha+1}{12}, \\[5pt]
\frac{6 \zeta}{d} < w < \frac{1}{d} \min \left( \frac{(\alpha+1)(\beta-2)}{\beta} - \frac{3\zeta}{2}, \alpha+ 1 - 6\zeta \right).
\end{cases}    
\end{equation}
Recall that for any such choices of \(\beta\) and \(\zeta\), one can construct an initial condition \(\phi\) satisfying Assumptions \ref{assumption:regulatity-mckv} and \ref{assumption:phi} as discussed below Assumption \ref{assumption:phi}.

\subsubsection{Approximation spaces}\label{subsec:approximation-spaces}

We first construct an orthonormal basis of \(L^{2}(\mathbb{T}^{d})\) by tensorizing the one-dimensional real Fourier basis. Specifically, for \(k =(k_1, \dots, k_d) \in \mathbb Z^{d}\), \(x=(x_1,\dots, x_d) \in \mathbb{T}^{d}\), and \(m \in \mathbb{Z}\), \(y \in \mathbb{T}\), define
\begin{equation}\label{eq:trig-basis}
    \tau_k(x) \coloneqq \prod_{j=1}^d T_{k_j}(x_j), \qquad
T_m(y) \coloneqq
\begin{cases}
     \sqrt{2} \cos(2\pi m y) & \text{for } m > 0, \\
     1 & \text{for } m = 0, \\
     \sqrt{2} \sin(2\pi m y) & \text{for } m < 0.
\end{cases}
\end{equation}
For \(K\geq1\), we let \(\dot{E}_K\) be the space of \textit{mean-zero} trigonometric polynomials of degree at most \(K\), which we will use as our discretisation space for \(W\). That is,
\begin{equation}\label{eq:def-E_K}
    \dot{E}_{K} \coloneqq \mathrm{span}_{\mathbb R} \left\{ \tau_k : k \in \mathbb{Z}^{d}, k \neq 0, \left| k \right| \leq K\right\} = \mathfrak{R} \left( \mathrm{span}_{\mathbb C} \left\{ e_k : k \in \mathbb{Z}^{d}, k \neq 0, \left| k \right| \leq K \right\} \right) 
\end{equation}  
where \(e_k : x \mapsto e^{2i\pi k \cdot x}\), and where $\mathfrak R$ denotes real parts. The second equality is easily proved using standard product-to-sum identities for trigonometric functions. The dimension of the discretisation space \(\dot{E}_K\) will be denoted by \(D\). The spaces \(\dot{E}_K\) and \(\mathbb{R}^{D}\) being isomorphic, we will tacitly identify them in what follows.

Note that with this choice of basis, the truncation happens at the same level as the truncation in the basis \((e_k)_k\) used in the main reference \cite{mckv}, in the sense that for any (real-valued) \(f \in L^{2}\) and any truncation level \(K\),   
\[
\sum_{|k| \leq K} \langle f, e_k \rangle_{L^{2}} \ \!e_k = \sum_{|k| \leq K} \langle f, \tau_k \rangle_{L^{2}}\ \!\tau_k.
\]
This implies that when \(f \in \dot{E}_K\), its Sobolev norms are controlled by its \(L^2\)-norm times a penalty depending on \(K\), a fact which we shall use repeatedly in the sequel. More precisely, for any \(s \geq 0\) and \(f \in \dot{E}_K\) we have that,
\begin{equation} \label{terminator}
\| f \|_{H^{s}}^2 \lesssim \sum_{| k | \leq K} | k |^{2s} | \left\langle f,e_k \right\rangle  |^{2} \leq K^{2s} \| f \|^{2}_{L^{2}}.
\end{equation}

\begin{remark}\label{remark:equivalence-D-K^d} \normalfont
    Recall that \(D = \mathrm{dim}(\dot{E}_K) = \#\left\{ k \in \mathbb{Z}^d : \left| k \right| \leq K \right\} - 1\). By covering the unit ball in \(\mathbb{R}^{d}\) with cubes of side length \(1\), we see that when \(K \geq \sqrt{d}/2\),
\[
\omega_d (K-\sqrt{d}/2)^{d} \leq D(K) + 1 \leq \omega_d (K+\sqrt{d}/2)^{d}
\] where \(\omega_d\) is the volume of the unit ball. From this we immediately conclude that \(D(K) \sim \omega_d K^{d}\) as \(K \to \infty\), and that
\begin{equation}\label{eq:equivalence-D-K^d}
    D(K) \simeq_d K^{d} \text{ for any } K \geq 1.
\end{equation}
We will use this equivalence throughout to switch between the conditions of \cite[Chapter 5]{nickl2023bayesian} stated in terms of \(D\) and conditions stated in terms of \(K^{d}\).
\end{remark}

\subsubsection{Derivatives of the likelihood function}

Let \(\phi\) satisfy Assumption \ref{assumption:regulatity-mckv} for some \(\beta \geq 3 + d\). For any \(W \in \dot{W}^{2,\infty}\), let \(\rho_W\) denote the unique solution to the PDE \eqref{eq:mckv} with initial condition $\phi$. Define the log-likelihood per single observation $(Y,t,X) \in \mathbb R \times [0,T] \times \mathbb T^d$ as
\begin{equation}\label{eq:log-likelihood}
   \ell (W) \coloneqq - \frac{1}{2}| Y - \rho_W(t,X) | ^{2}, \quad W \in \dot{E}_K,
\end{equation}
with gradient given by 
\[
\nabla \ell(W) \coloneqq \lim_{h \to 0} \left( \frac{\ell(W+h \tau_k) - \ell(W)}{h}\right)_{\substack{\left| k \right| \leq K \\ k \neq 0}} \eqqcolon \left( \frac{\partial \ell}{\partial{W_{k}}}(W) \right)_{\substack{\left| k \right| \leq K \\ k \neq 0}}
\] 
which we view as a vector in \(\mathbb{R}^{D}\). The Hessian \(\nabla ^{2} \ell \) is defined analogously as the \(D \times D\) matrix of second partial derivatives. The definition of these derivatives will be used for any function from \(\dot{E}_K\) to \(\mathbb{R}\).

By \cite[Theorem 5]{mckv}, Theorem \ref{thm:linearisation-mckean-vlasov} and the Sobolev imbedding, the solution \(\rho_W\) as well as its higher Fréchet derivatives \(D^{k}\rho_W [H_1,\dots,H_k], k=1,2,\) all have representatives that are jointly continuous in \((t,x)\) so that point evaluation is well-defined. Therefore, for \(\nabla \ell \) and \(\nabla ^{2} \ell \)  to exist, it suffices that the map 
\begin{equation}\label{eq:Gtx}
\mathcal{G}^{t,x} : \dot{E}_{K} \to \mathbb R, \quad W \mapsto \rho_W(t,x)    
\end{equation} 
admits a gradient and a Hessian, which is the purpose of the next lemma.

\begin{lemma}\label{lem:discrete-derivatives-forward-map}
    Let \(\phi\) satisfy Assumption \ref{assumption:regulatity-mckv} for some \(\beta \geq 3 + d\). For any \(W \in \dot{W}^{2,\infty}\), let \(\rho_W\) denote the unique solution to the PDE \eqref{eq:mckv} with initial condition $\phi$ and let \(D^{k} \rho_W\) be the \(k\)-th Fréchet derivative of \(W \mapsto \rho_W\) given by Theorem \ref{thm:linearisation-mckean-vlasov}. For any \((t,x) \in [0,T] \times \mathbb{T}^{d}\) and \(W \in \dot{W}^{2,\infty}\), the gradient \(\nabla \mathcal{G}^{t,x}(W)\) and Hessian \(\nabla ^{2} \mathcal{G}^{t,x}(W)\) of \(\mathcal{G}^{t,x}\) in \eqref{eq:Gtx} exist and are given by
\begin{equation}\label{eq:discrete-to-continuous}
h_1^{T} \nabla \mathcal{G}^{t,x} (W) =  D \rho_W [h_1] (t,x), \quad  h_1^{T} \nabla ^{2} \mathcal{G}^{t,x} (W) h_2 =  D^{2} \rho_W [h_1,h_2] (t,x) 
\end{equation} 
for any \(h_1,h_2 \in \mathbb{R}^{D}\).
\end{lemma}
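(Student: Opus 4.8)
The plan is to realise $\mathcal{G}^{t,x}$ as the composition $\mathcal{G}^{t,x} = \mathrm{ev}_{t,x} \circ \mathcal{G}$, where $\mathcal{G}\colon \dot{W}^{2,\infty} \to F$, with $F = L^{2}([0,T];H^{\beta+1}) \cap H^{1}([0,T];H^{\beta-1})$ as in \eqref{keydefinition}, is the forward map shown to be Fréchet $C^{\infty}$ in Theorem~\ref{thm:linearisation-mckean-vlasov}, and $\mathrm{ev}_{t,x}\colon \rho \mapsto \rho(t,x)$ is point evaluation; then the two formulas will follow from the chain rule for Fréchet derivatives together with the linearity of $\mathrm{ev}_{t,x}$. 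Throughout one works with $W$ in the finite-dimensional space $\dot{E}_{K} \subset C^{\infty}(\mathbb{T}^{d}) \subset \dot{W}^{2,\infty}$ (see \eqref{eq:def-E_K}), on which $\mathcal{G}^{t,x}$ in \eqref{eq:Gtx} is defined.

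The one step requiring a small argument is that $\mathrm{ev}_{t,x}$ is a bounded linear functional on $F$. First I would note that \eqref{eq:trace-theorem-time} with $\gamma = \beta$ gives the continuous embedding $F \hookrightarrow C([0,T];H^{\beta})$, and since $\beta \ge 3 + d > d/2$ the Sobolev embedding yields $H^{\beta} \hookrightarrow C(\mathbb{T}^{d})$; composing, $F \hookrightarrow C([0,T]\times\mathbb{T}^{d})$ with $\sup_{(t,x)}|\rho(t,x)| \lesssim \|\rho\|_{F}$. Hence for each fixed $(t,x)$ the map $\mathrm{ev}_{t,x}$ is linear and continuous, so it is $C^{\infty}$ in the Fréchet sense with $D\,\mathrm{ev}_{t,x} = \mathrm{ev}_{t,x}$ and vanishing higher derivatives.

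Next I would restrict $\mathcal{G}$, and hence $\mathcal{G}^{t,x}$, to $\dot{E}_{K}$: a Fréchet-$C^{\infty}$ map on a Banach space stays $C^{\infty}$ on any subspace with the induced norm, and because all norms on $\dot{E}_{K} \cong \mathbb{R}^{D}$ are equivalent, the Fréchet differential there agrees with the usual differential on $\mathbb{R}^{D}$; in particular the partial derivative $\partial/\partial W_{k}$ is the directional derivative along the basis vector $\tau_{k}$, so that $h^{T}\nabla g(W) = Dg(W)[h]$ for any scalar $C^{1}$ function $g$ on $\dot{E}_{K}$ and any $h \in \mathbb{R}^{D} \cong \dot{E}_{K}$, and similarly $h_{1}^{T}\nabla^{2}g(W)h_{2} = D^{2}g(W)[h_{1},h_{2}]$. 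Applying this with $g = \mathcal{G}^{t,x} = \mathrm{ev}_{t,x}\circ\mathcal{G}$ and using the chain rule (e.g.\ \cite[Theorem 8.2.1]{dieudonne}) together with $D\,\mathrm{ev}_{t,x} = \mathrm{ev}_{t,x}$ gives
\[
h_{1}^{T}\nabla\mathcal{G}^{t,x}(W) = \mathrm{ev}_{t,x}\big(D\mathcal{G}(W)[h_{1}]\big) = D\rho_{W}[h_{1}](t,x);
\]
differentiating once more, and using that the linear map $\mathrm{ev}_{t,x}$ commutes with the second Fréchet derivative while the latter is symmetric by Schwarz's theorem, yields
\[
h_{1}^{T}\nabla^{2}\mathcal{G}^{t,x}(W)h_{2} = \mathrm{ev}_{t,x}\big(D^{2}\mathcal{G}(W)[h_{1},h_{2}]\big) = D^{2}\rho_{W}[h_{1},h_{2}](t,x),
\]
which are the two claimed identities. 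I do not anticipate a genuine obstacle here: the only nontrivial ingredient is the embedding $F \hookrightarrow C([0,T]\times\mathbb{T}^{d})$ from \eqref{eq:trace-theorem-time} and Sobolev embedding, and everything else is the chain rule and finite-dimensional bookkeeping.
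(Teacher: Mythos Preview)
Your proof is correct and rests on the same analytic ingredient as the paper's: the continuous embedding $F \hookrightarrow C([0,T]\times\mathbb{T}^{d})$ obtained from \eqref{eq:trace-theorem-time} combined with the Sobolev embedding $H^{\beta}\hookrightarrow C(\mathbb{T}^{d})$. The difference is in packaging. The paper argues directly at the level of difference quotients: it bounds $|\mathcal{G}^{t,x}(W+h\tau_k)-\mathcal{G}^{t,x}(W)-hD\rho_W[\tau_k](t,x)|$ by the $L^{\infty}_{T}L^{\infty}$-norm of $\rho_{W+h\tau_k}-\rho_W-D\rho_W[h\tau_k]$, controls that by the $F$-norm via the embedding, and then invokes the Fr\'echet differentiability of $\mathcal{G}$ from Theorem~\ref{thm:linearisation-mckean-vlasov} to get $o(h)$. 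You instead factor $\mathcal{G}^{t,x}=\mathrm{ev}_{t,x}\circ\mathcal{G}$, observe that the embedding makes $\mathrm{ev}_{t,x}$ a bounded linear functional on $F$, and apply the chain rule. Your route is slightly more structural and handles both the gradient and Hessian in one stroke, whereas the paper spells out only the first-order case explicitly and leaves the Hessian implicit; conversely, the paper's argument is entirely self-contained and avoids any appeal to abstract chain-rule machinery. Neither approach has a real advantage in difficulty here.
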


\begin{proof}[Proof of Lemma \ref{lem:discrete-derivatives-forward-map}]
    By linearity of \(D \rho_W\) and \(D^{2} \rho_W\), it suffices to show that the limit of \break \(\left( \mathcal{G}^{t,x}(W+h \tau_k) - \mathcal{G}^{t,x}(W) \right)/h\) exists as \(h\to 0\) and is equal to \(D\rho_W[\tau_k](t,x)\). Recall that by \eqref{eq:trace-theorem-time} and the Sobolev embedding, \(L^{2}_{T}H^{\beta+1}\cap H^{1}_{T}H^{\beta-1} \hookrightarrow L^{\infty}_{T}L^{\infty}\). Therefore,  
    \begin{align*}
        \left|  \mathcal{G}^{t,x}(W+h\tau_k) - \mathcal{G}^{t,x}(W) - h D \rho_W [\tau_k](t,x) \right| &\leq \left\| \rho_{W+h\tau_k} - \rho_W - D \rho_W [h\tau_k] \right\|_{L^{\infty}([0,T],L^{\infty})}\\
        &\lesssim \left\| \rho_{W+h\tau_k} - \rho_W - D \rho_W [h\tau_k] \right\|_{L^{2}_{T}H^{\beta+1}\cap H^{1}_{T}H^{\beta-1}} \\
        &= o(\left\| h\tau_k \right\|_{W^{2,\infty}})=o(h)
    \end{align*} 
    where in the last line we have used the fact that \(D \rho_W\) is the Fréchet derivative of \(\mathcal{G} : W^{2,\infty} \to L^{2}([0,T],H^{\beta +1})\cap H^{1}([0,T],H^{\beta -1})\) from Theorem \ref{thm:linearisation-mckean-vlasov}. Dividing by \(\left| h \right|\)  concludes the proof. 
\end{proof}

\subsection{Local properties of the likelihood function}

\subsubsection{Local average curvature}\label{subsec:local-average-curvature}

Given a ground truth parameter \(W_0 \in \dot{H}^{\alpha+1}\), define \(W_{0,K}\) to be its projection onto \(\dot{E}_K\) from \eqref{eq:def-E_K} and, for \(\mathbf{r} >0\) to be chosen, the ball 
\begin{equation}\label{eq:ball}
    \mathcal{B}_{\mathbf{r}}= \{W \in \dot{E}_K : \| W-W_{0,K} \|_{L^{2}} \leq \mathbf{r} \}.
\end{equation} 
If \(\| W_0 \|_{H^{\alpha+1}} \leq M\), then \(\mathcal{B}_{\mathbf{r}}\) with \(\mathbf{r}=rD^{-w}\) is contained in a \(W^{2,\infty}\)-ball of radius \(\bar{M}(d,r,M)\) uniform in \(K\) and \(W_0\): indeed, if \(W \in \mathcal{B}_{\mathbf{r}}\) then using the Sobolev embeddings, (\ref{terminator}) and \eqref{eq:equivalence-D-K^d},
\begin{equation}\label{eq:Br-contained-in-W2infty-ball}
    \| W \|_{W^{2,\infty}} \lesssim_d \| W_{0,K} \|_{H^{\alpha+1}} + \| W - W_{0,K} \|_{H^{2+d}} \lesssim_d M + rK^{2+d-wd} \leq C_d (M + r) \eqqcolon \bar{M}
\end{equation} 
where we used that \(wd > 6 \zeta > \beta > d + 2\) by the choice of constants in \eqref{eq:constants}. The following theorem shows that the negative log-likelihood function is convex \textit{on average} over the local high-dimensional region \(\mathcal{B}_{\mathbf{r}}\) -- in particular it verifies the key curvature condition given in \cite[Equation (43)]{NicklWang2024_polynomialTimeLangevin}. Note that the stability exponent naturally depends on the amount of `ill-posedness' of the underlying deconvonlution problem via the constant $\zeta$ from Assumption \ref{assumption:phi}. 
\begin{theorem}[Local average curvature]\label{thm:local-average-curvature}
    Choose \(\alpha, \beta, \zeta\) and \(w\) as in \eqref{eq:constants}. Let the initial condition \(\phi\) satisfy Assumptions \ref{assumption:regulatity-mckv} and \ref{assumption:phi} for this choice of \(\beta\) and \(\zeta\). For any \(W \in \dot{W}^{2,\infty}\), let \(\rho_W\) denote the unique solution to the PDE \eqref{eq:mckv} with initial condition $\phi$.
    Let \(M>0\) and \(W_0 \in \dot{H}^{\alpha+1}\) such that \(\left\| W_0 \right\|_{W^{2,\infty}} \leq M\). Then, there exist \(0<r_{\mathrm{max}}\leq 1\) and \(c_0>0\) depending on \(d,T,M,\beta,c_{\ast},\phi_{\mathrm{min}},\| \phi \|_{H^{\beta}}\) such that for any \(0 < r \leq r_{\mathrm{max}}\), \(\mathbf{r} = rD^{-w}\), \mbox{\(\| \rho_{W_0} - \rho_{W_{0,K}} \|_{L^{2}([0,T];L^{2})} \leq \mathbf{r}\)} we have
    \[
    \inf_{W\in \mathcal{B}_{\mathbf{r}}} \lambda_{\min} \left( \mathbb{E}_{W_0} [- \nabla ^{2}\ell (W)] \right) \geq c_{0} D^{-6\zeta/d},
    \]
    where \(\mathcal{B}_{\mathbf{r}}\) is as in \eqref{eq:ball} for $K \ge 1$, \(\lambda_{\min}\) denotes the smallest eigenvalue of the matrix inside the parentheses, and the expectation is taken under the law \(P^1_{W_0}\) from \eqref{eq:data}.
\end{theorem}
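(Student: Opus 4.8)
The plan is to reduce the lower bound on $\lambda_{\min}(\mathbb E_{W_0}[-\nabla^2 \ell(W)])$ to the gradient stability estimate of Theorem \ref{prop:gradient-stability}, by first showing that the Hessian of the expected negative log-likelihood is, up to lower-order terms controlled by the bias $\|\rho_{W_0}-\rho_{W_{0,K}}\|_{L^2_T L^2}$ and the radius $\mathbf r$, equal to the Fisher-information-type quadratic form $h \mapsto \mathbb E_{t,X}\big[|D\rho_W[h](t,X)|^2\big] = \|D\rho_W[h]\|_{L^2(\mathcal X,\lambda)}^2$. First I would compute, via Lemma \ref{lem:discrete-derivatives-forward-map}, that for $W \in \mathcal B_{\mathbf r}$ and $h \in \dot E_K$ with $\|h\|_D=1$,
\[
-h^T \nabla^2 \ell(W) h = |D\rho_W[h](t,X)|^2 - (Y - \rho_W(t,X))\, D^2\rho_W[h,h](t,X),
\]
so that taking $\mathbb E_{W_0}$ (under which $Y = \rho_{W_0}(t,X) + \varepsilon$ with $\varepsilon$ centred and independent of $(t,X)$) kills the part of the cross term involving $\varepsilon$ and leaves
\[
\mathbb E_{W_0}[-h^T\nabla^2\ell(W)h] = \|D\rho_W[h]\|_{L^2(\mathcal X,\lambda)}^2 - \mathbb E_{t,X}\big[(\rho_{W_0}-\rho_W)(t,X)\, D^2\rho_W[h,h](t,X)\big].
\]

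Next I would bound the remainder term. By Cauchy–Schwarz it is at most $\|\rho_{W_0}-\rho_W\|_{L^2(\mathcal X,\lambda)}\,\|D^2\rho_W[h,h]\|_{L^2(\mathcal X,\lambda)}$; the second factor is $\lesssim \|h\|_{L^2}^2$ uniformly over $\mathcal B_{\mathbf r}$ by Lemma \ref{lem:bound-on-derivatives-of-forward-map} (with $k=2$, $\gamma=d$) together with the Sobolev embedding $L^2_T H^{\gamma+1}\cap H^1_T H^{\gamma-1}\hookrightarrow L^\infty(\mathcal X)$ from \eqref{eq:trace-theorem-time}, while for the first factor I would split
\[
\|\rho_{W_0}-\rho_W\|_{L^2_T L^2} \le \|\rho_{W_0}-\rho_{W_{0,K}}\|_{L^2_T L^2} + \|\rho_{W_{0,K}}-\rho_W\|_{L^2_T L^2} \le \mathbf r + L\,\|W_{0,K}-W\|_{H^{-(\beta+1)}} \le \mathbf r + L\,\mathbf r,
\]
using the hypothesis on the bias, the forward estimate Theorem \ref{thm:lipschitz-continuity-forward-map}, and $\|W_{0,K}-W\|_{H^{-(\beta+1)}}\le \|W_{0,K}-W\|_{L^2}\le \mathbf r$ for $W\in\mathcal B_{\mathbf r}$. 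Since $\|h\|_{L^2}^2 = \|h\|_{H^0}^2 \lesssim \|h\|_D^2 = 1$ for $h \in \dot E_K$ (the trigonometric basis is $L^2$-orthonormal), the remainder is $\lesssim \mathbf r = rD^{-w}$. Meanwhile the main term is bounded below by Theorem \ref{prop:gradient-stability} and \eqref{eq:equivalence-D-K^d}: $\|D\rho_W[h]\|_{L^2_T L^2}^2 \ge c\,K^{-6\zeta}\|h\|_{L^2}^2 \simeq_d c\, D^{-6\zeta/d}$, again using $\|h\|_{L^2}\simeq \|h\|_D=1$ on $\dot E_K$.

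Combining, $\mathbb E_{W_0}[-h^T\nabla^2\ell(W)h] \ge c_1 D^{-6\zeta/d} - c_2\, r D^{-w}$ uniformly over $W\in\mathcal B_{\mathbf r}$ and unit $h$. Here the constant choices in \eqref{eq:constants} guarantee $w > 6\zeta/d$, so $D^{-w} = o(D^{-6\zeta/d})$ and, choosing $r_{\max}$ small enough (depending only on the listed constants) and any $r\le r_{\max}$, the negative term is absorbed into half of the main term, giving the claimed bound with $c_0 = c_1/2$. The main obstacle I anticipate is the bookkeeping in the remainder-term estimate — specifically making sure that the $L^\infty(\mathcal X)$ control on $D^2\rho_W$ is uniform over the whole ball $\mathcal B_{\mathbf r}$, which requires that $\mathcal B_{\mathbf r}$ sits inside a fixed $W^{2,\infty}$-ball of radius $\bar M$ independent of $K$ and $W_0$; this is exactly the content of \eqref{eq:Br-contained-in-W2infty-ball}, whose validity in turn hinges on $wd > \beta > d+2$, again supplied by \eqref{eq:constants}. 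Once that uniformity is in hand, every constant in the argument depends only on $d,T,M,\beta,c_\ast,\phi_{\min},\|\phi\|_{H^\beta}$, as required.
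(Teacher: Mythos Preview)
Your proposal is correct and follows essentially the same route as the paper's proof: compute the Hessian via Lemma \ref{lem:discrete-derivatives-forward-map}, take expectation to obtain the Fisher term $\|D\rho_W[h]\|_{L^2}^2$ minus the cross term $\langle \rho_W-\rho_{W_0}, D^2\rho_W[h,h]\rangle_{L^2}$, lower bound the first via Theorem \ref{prop:gradient-stability}, upper bound the second via Cauchy--Schwarz together with Theorem \ref{thm:lipschitz-continuity-forward-map} and Lemma \ref{lem:bound-on-derivatives-of-forward-map}, and absorb the remainder using $wd>6\zeta$ from \eqref{eq:constants}. The only cosmetic difference is that the paper bounds $\|D^2\rho_W[h,h]\|_{L^2_T L^2}$ directly from Lemma \ref{lem:bound-on-derivatives-of-forward-map} (since $L^2_TH^{\gamma+1}\hookrightarrow L^2_TL^2$), whereas you pass through the $L^\infty(\mathcal X)$ embedding, which is unnecessary but harmless.
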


\begin{proof}[Proof of Theorem \ref{thm:local-average-curvature}]
Let \(W \in \mathcal{B}_{\mathbf{r}}\) and recall from \eqref{eq:Br-contained-in-W2infty-ball} that \(\| W \|_{W^{2,\infty}} \leq \bar{M}\) for some \(\bar{M} = \bar{M}(M,d) > 0\) since \(r \leq r_{\max} \leq 1\).
    From the definition of \(\ell \) in \eqref{eq:log-likelihood} we have that for any \(W \in \dot{E}_K\),    
\begin{align*}
    \nabla \ell (W) &= (Y - \rho_W(t,X)) \nabla \mathcal{G}^{t,X}(W), \\
    \nabla ^{2} \ell (W) &= - \nabla \mathcal{G}^{t,X}(W) (\nabla \mathcal{G}^{t,X}(W))^{T} + (Y - \rho_W(t,X)) \nabla ^{2} \mathcal{G}^{t,X}(W).
\end{align*} 
Therefore, using \eqref{eq:discrete-to-continuous} and taking expectation under \(P_{W_0}^1\) from \eqref{eq:data},
\[
h^{T}\mathbb{E}_{W_0}[- \nabla ^{2}\ell (W)]h = \frac{1}{T} \left\| D \rho_W [h] \right\|_{L^{2}_TL^{2}}^{2} - \frac{1}{T}\left\langle \rho_W-\rho_{W_0}, D^{2}\rho_W [h,h] \right\rangle _{L^{2}_TL^{2}}
\] for any \(h \in \mathbb{R}^{d}\). Now, by applying Theorem \ref{prop:gradient-stability} to the first term and the Cauchy-Schwarz inequality to the second, we deduce
\[
h^{T}\mathbb{E}_{W_0}[- \nabla ^{2}\ell (W)]h \gtrsim_T CK^{-6\zeta} \| h \|_{L^{2}}^{2} - \| \rho_W - \rho_{W_0} \|_{L^{2}_{T}L^{2}} \left\| D^{2}\rho_W [h,h] \right\|_{L^{2}_TL^{2}}.
\]  
But by Theorem \ref{thm:lipschitz-continuity-forward-map}, the assumption on \(\| \rho_{W_0} - \rho_{W_{0,K}} \|_{L^{2}([0,T];L^{2})}\), (\ref{eq:equivalence-D-K^d}), and the fact that \(W \in \mathcal{B}_{\mathbf{r}}\), we obtain 
\[
\| \rho_W - \rho_{W_0} \|_{L^{2}_{T}L^{2}} \leq \| \rho_W - \rho_{W_{0,K}} \|_{L^{2}_{T}L^{2}} + \| \rho_{W_{0,K}} - \rho_{W_0} \|_{L^{2}_{T}L^{2}} \lesssim_d L\| W - W_{0,K} \|_{L^{2}} + r K^{-dw} \lesssim_d r (1 + L)K^{-dw}.
\] 
Moreover, by Lemma \ref{lem:bound-on-derivatives-of-forward-map} with \(k=2\) and \(\gamma=d\), we see that
\[
\left\| D^{2}\rho_W [h,h] \right\|_{L^{2}_TL^{2}} \leq C' \| h \|_{L^{2}}^{2}.
\] 
Putting these estimates together, we obtain
\[
h^{T}\mathbb{E}_{W_0}[- \nabla ^{2}\ell (W)]h \gtrsim_{d,T} (C K^{-6\zeta} - C'r(1 + L) K^{-wd}) \| h \|_{L^{2}}^{2}.
\] 
Using further that \(wd > 6\zeta\) it follows that there is \(0<r_{\max} \leq 1\) small enough and \(c_0>0\) depending on \(d,T,M,\beta,\| \phi \|_{H^{\beta}}\) such that for any \(0 < r \leq r_{\max}\),
\[
h^{T}\mathbb{E}_{W_0}[- \nabla ^{2}\ell (W)]h \geq c_0 K^{-6\zeta} \| h \|_{L^{2}}^{2}.
\] 
Finally, taking infima over all \(h \in \mathbb{R}^{d}\) with \(\| h \|_{L^{2}}=1\) and \(W \in \mathcal{B}_{\mathbf{r}}\), as well as applying (\ref{eq:equivalence-D-K^d}), concludes the proof.  
\end{proof}

\subsubsection{Local regularity}

For \(G : \dot{E}_K \to \mathbb{R}\), define the \(C^{2,1}(\mathcal{B})\)-norm as  
\begin{equation*}
        \left\| G \right\|_{C^{2,1}(\mathcal{B})} = \sup_{\substack{w,w' \in \mathcal{B} \\ w \neq w'}} \left\{ \left|G(w) \right|  + \left\| G(w) \right\|_{D} + \left\| \nabla ^{2}G(w) \right\|_{D} + \frac{\left\| \nabla ^{2} G(w) - \nabla ^{2} G(w') \right\|_{D}}{\left\| w-w' \right\|_{D} } \right\} 
\end{equation*} where \(\left\| \cdot  \right\|_{D}\) is the Euclidean norm on \(\mathbb{R}^{D}\) when applied to a vector, and the operator norm induced by the Euclidean norm when applied to a matrix.

\begin{proposition}[Local regularity]\label{prop:local-regularity}
    Choose \(\alpha, \beta, \zeta\) and \(w\) as in \eqref{eq:constants}. Let \(\phi\) satisfy Assumption \ref{assumption:regulatity-mckv} for this choice of \(\beta\). Let \(M>0, K \geq 1\) and \(W_0 \in \dot{H}^{\alpha+1}\) such that \(\left\| W_0 \right\|_{W^{2,\infty}} \leq M\). Then, there exists \(c_1(d,T,M,\| \phi \|_{H^{d}})>0\) such that, after choosing \(\mathbf{r} = rD^{-w}\) for the radius of the ball \(\mathcal{B}_{\mathbf{r}}\) in \eqref{eq:ball} with any \(0 < r \leq 1\), 
    \[
    \sup _{(t,x) \in [0,T]\times \mathbb{T}^{d}} \left\| \mathcal{G}^{t,x} \right\|_{C^{2,1}(\mathcal{B}_{\mathbf{r}})} \leq c_1.
    \]
\end{proposition}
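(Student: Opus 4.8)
The plan is to bound, uniformly over $(t,x)\in[0,T]\times\mathbb{T}^d$, over $W,W'\in\mathcal{B}_{\mathbf{r}}$, and over $K\ge1$, each of the four quantities making up $\|\mathcal{G}^{t,x}\|_{C^{2,1}(\mathcal{B}_{\mathbf{r}})}$: the value $|\mathcal{G}^{t,x}(W)|=|\rho_W(t,x)|$, the gradient norm $\|\nabla\mathcal{G}^{t,x}(W)\|_D$, the Hessian operator norm $\|\nabla^2\mathcal{G}^{t,x}(W)\|_D$, and the Lipschitz seminorm $\|w-w'\|_D^{-1}\,\|\nabla^2\mathcal{G}^{t,x}(w)-\nabla^2\mathcal{G}^{t,x}(w')\|_D$. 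Two observations are used throughout. First, since $(\tau_k)$ is an orthonormal basis of $L^2(\mathbb{T}^d)$, the Euclidean norm on $\mathbb{R}^D$ coincides with the $L^2$-norm under the identification of $\mathbb{R}^D$ with $\dot{E}_K$ from Section \ref{subsec:approximation-spaces}, so one may freely pass between $\|\cdot\|_D$ and $\|\cdot\|_{L^2}$. Second, by \eqref{eq:Br-contained-in-W2infty-ball} (whose only input is $wd>d+2$, guaranteed by \eqref{eq:constants}), every $W\in\mathcal{B}_{\mathbf{r}}$ satisfies $\|W\|_{W^{2,\infty}}\le\bar M$ with $\bar M=\bar M(d,M)$ independent of $K$, so the constants provided by \cite[Theorem 5]{mckv} and by Lemma \ref{lem:bound-on-derivatives-of-forward-map} can be taken uniform over $W\in\mathcal{B}_{\mathbf{r}}$ and over $K$. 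Finally, $\mathcal{B}_{\mathbf{r}}$ is a convex subset of $\dot{E}_K$.

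For the first three terms I would use the continuous embeddings $L^2_TH^{d+1}\cap H^1_TH^{d-1}\hookrightarrow L^\infty_TL^\infty\hookrightarrow C([0,T]\times\mathbb{T}^d)$, which follow from \eqref{eq:trace-theorem-time} applied with $\gamma=d$ together with the Sobolev embedding $H^d\hookrightarrow L^\infty$ (valid as $d>d/2$), so that point evaluation is bounded on this space. Then $|\rho_W(t,x)|\lesssim\|\rho_W\|_{L^2_TH^{d+1}\cap H^1_TH^{d-1}}\le C(d,T,\bar M,\|\phi\|_{H^d})$ by \cite[Theorem 5]{mckv}. For the gradient, Lemma \ref{lem:discrete-derivatives-forward-map} gives $h^T\nabla\mathcal{G}^{t,x}(W)=D\rho_W[h](t,x)$, so
\[
\|\nabla\mathcal{G}^{t,x}(W)\|_D=\sup_{\|h\|_{L^2}\le1}\bigl|D\rho_W[h](t,x)\bigr|\lesssim\sup_{\|h\|_{L^2}\le1}\|D\rho_W[h]\|_{L^2_TH^{d+1}\cap H^1_TH^{d-1}}\le C,
\]
the last inequality being Lemma \ref{lem:bound-on-derivatives-of-forward-map} with $k=1$, $\gamma=d$; the Hessian term is handled identically from $h_1^T\nabla^2\mathcal{G}^{t,x}(W)h_2=D^2\rho_W[h_1,h_2](t,x)$ and Lemma \ref{lem:bound-on-derivatives-of-forward-map} with $k=2$, $\gamma=d$.

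The main step — and the only one requiring a third derivative — is the Lipschitz estimate for the Hessian. By Theorem \ref{thm:linearisation-mckean-vlasov} the map $\mathcal{G}:\dot{W}^{2,\infty}\to L^2_TH^{\beta+1}\cap H^1_TH^{\beta-1}\eqqcolon F$ is $C^\infty$ in the Fréchet sense, hence $W\mapsto D^2\rho_W$ is a $C^1$ map into the space of bounded bilinear maps valued in $F$, with derivative $D^3\rho_W$. Given $W,W'\in\mathcal{B}_{\mathbf{r}}$, the segment $W_s\coloneqq(1-s)W'+sW$ stays in $\mathcal{B}_{\mathbf{r}}$ by convexity, so composing with the bounded linear evaluation $B\mapsto B(h_1,h_2)$ and applying the fundamental theorem of calculus in Banach spaces (e.g., \cite[Proposition A.2.3]{LiuRockner_introSPDEs}) gives, for any $h_1,h_2\in\dot{E}_K$,
\[
D^2\rho_W[h_1,h_2]-D^2\rho_{W'}[h_1,h_2]=\int_0^1 D^3\rho_{W_s}[W-W',h_1,h_2]\,\mathrm{d}s
\]
with the Bochner integral converging in $F$, hence also in $C([0,T]\times\mathbb{T}^d)$ since $F\hookrightarrow C([0,T]\times\mathbb{T}^d)$ by \eqref{eq:trace-theorem-time} (with $\gamma=\beta$) and $H^\beta\hookrightarrow L^\infty$. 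Evaluating at $(t,x)$ and invoking Lemma \ref{lem:discrete-derivatives-forward-map} (whose argument extends verbatim to third order) identifies the left-hand side with $h_1^T\bigl(\nabla^2\mathcal{G}^{t,x}(W)-\nabla^2\mathcal{G}^{t,x}(W')\bigr)h_2$. Bounding the integrand with Lemma \ref{lem:bound-on-derivatives-of-forward-map} for $k=3$, $\gamma=d$ yields $\bigl|D^3\rho_{W_s}[W-W',h_1,h_2](t,x)\bigr|\lesssim\|W-W'\|_{L^2}\,\|h_1\|_{L^2}\,\|h_2\|_{L^2}$ uniformly in $s$, and a supremum over $\|h_1\|_D,\|h_2\|_D\le1$ gives $\|\nabla^2\mathcal{G}^{t,x}(W)-\nabla^2\mathcal{G}^{t,x}(W')\|_D\lesssim\|W-W'\|_D$ with constant depending only on $d,T,\bar M,\|\phi\|_{H^d}$.

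The hard analytic content is entirely encapsulated in Lemma \ref{lem:bound-on-derivatives-of-forward-map}; what remains is bookkeeping. The two points to be careful about are: (i) that point evaluation commutes with the order-three Bochner integral, which holds because the integral already converges in $F\hookrightarrow C([0,T]\times\mathbb{T}^d)$; and (ii) that every implied constant depends on $\phi$ only through $\|\phi\|_{H^d}$ and is independent of $K$ — the former by consistently choosing $\gamma=d$ (the most economical admissible value with $d/2<\gamma\le\beta$) in Lemma \ref{lem:bound-on-derivatives-of-forward-map} and in the bound for $\rho_W$ from \cite[Theorem 5]{mckv}, the latter by \eqref{eq:Br-contained-in-W2infty-ball}. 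Collecting the four bounds and taking suprema over $W\in\mathcal{B}_{\mathbf{r}}$ and $(t,x)\in[0,T]\times\mathbb{T}^d$ then produces $\|\mathcal{G}^{t,x}\|_{C^{2,1}(\mathcal{B}_{\mathbf{r}})}\le c_1(d,T,M,\|\phi\|_{H^d})$. I do not anticipate any essential obstacle beyond this.
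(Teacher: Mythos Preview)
Your proposal is correct and follows essentially the same approach as the paper: both reduce the four terms of the $C^{2,1}$-norm to bounds on $\rho_W$ and its Fr\'echet derivatives $D^k\rho_W$ ($k=1,2,3$) in $L^2_TH^{d+1}\cap H^1_TH^{d-1}$ via the embedding into $L^\infty_TL^\infty$, invoke \cite[Theorem~5]{mckv} and Lemma~\ref{lem:bound-on-derivatives-of-forward-map} with $\gamma=d$, and handle the Lipschitz term by the fundamental theorem of calculus along the segment in $\mathcal{B}_{\mathbf r}$ combined with the $k=3$ case of Lemma~\ref{lem:bound-on-derivatives-of-forward-map}. Your additional remarks on point evaluation commuting with the Bochner integral and on the $K$-independence of constants via \eqref{eq:Br-contained-in-W2infty-ball} are precisely the implicit ingredients of the paper's argument.
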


\begin{proof}[Proof of Proposition \ref{prop:local-regularity}]
    We start by writing \(\sup _{(t,x) \in [0,T]\times \mathbb{T}^{d}} \left\| \mathcal{G}^{t,x} \right\|_{C^{2,1}(\mathcal{B}_{\mathbf{r}})}\) in terms of the Fréchet derivatives of \(W \mapsto \rho_W\). Using \eqref{eq:discrete-to-continuous}, we have the following identities:
\begin{align}
    &\sup_{t,x} \left| \mathcal{G}^{t,x}(W) \right| =\left\| \rho_W \right\|_{L^{\infty}([0,T];L^{\infty})} \label{eq:local-reg-term1}, \\
    &\sup_{t,x} \left\| \nabla \mathcal{G}^{t,x}(W) \right\|_D = \sup_{h\in \mathbb{R}^{d} : \| h \|_D=1} \left\| D \rho_W [h] \right\|_{L^{\infty}([0,T];L^{\infty})} \label{eq:local-reg-term2},\\
    &\sup_{t,x} \left\| \nabla ^{2}\mathcal{G}^{t,x}(W) \right\|_{D} = \sup_{h_1,h_2 \in \mathbb{R}^{d} : \| h_1 \|_D=\| h_2 \|_D=1} \left\| D^{2}\rho_W [h_1,h_2] \right\|_{L^{\infty}([0,T];L^{\infty})}. \label{eq:local-reg-term3}
\end{align} 
Recall that by the Sobolev embeddings, \(L^{2}([0,T];H^{\gamma+1}) \cap H^{1}([0,T];H^{\gamma-1}) \hookrightarrow L^{\infty}([0,T];L^{\infty})\) for any \(\gamma > \frac{d}{2}\). For simplicity, we pick \(\gamma=d\). Therefore, modulo a constant depending only on \(d\) and \(T\), the norms in the display above can be replaced by \(L^{2}([0,T];H^{d+1}) \cap H^{1}([0,T];H^{d-1})\). Then, \eqref{eq:local-reg-term1} is bounded due to \cite[Theorem 5]{mckv}, and \eqref{eq:local-reg-term2} and \eqref{eq:local-reg-term3} are bounded due to Lemma \ref{lem:bound-on-derivatives-of-forward-map}. Recall that \(\left\| W \right\|_{W^{2,\infty}} \leq \bar{M}(d,M)\) by \eqref{eq:Br-contained-in-W2infty-ball}, therefore the final constant depends on \(d,T,M, \left\| \phi \right\|_{H^{d}}\).

For the final Lipschitz term, let \(X = L^{2}([0,T];H^{d+1}) \cap H^{1}([0,T];H^{d-1})\) and write, as before,
\[
\sup_{t,x} \left\| \nabla ^{2}\mathcal{G}^{t,x}(W_2) - \nabla ^{2}\mathcal{G}^{t,x}(W_1) \right\|_{D} \lesssim_{d,T} \sup_{h_1,h_2 \in \mathbb{R}^{d} : \| h_1 \|_D=\| h_2 \|_D=1} \left\| (D^{2}\rho_{W_2} - D^{2}\rho_{W_1}) [h_1,h_2] \right\|_{X}.
\] Now, we use the mean value theorem. By the fundamental theorem of calculus in Banach spaces (e.g. \cite[Proposition A.2.3]{LiuRockner_introSPDEs}), 
\[
D^{2}\rho_{W_2} - D^{2}\rho_{W_1} = \int_{0}^{1} D^{3}\rho_{W_1 + (W_2 - W_1)u} [W_2 - W_1]  \mathop{}\!\mathrm{d}u
\] with convergence in \(X\), by Theorem \ref{thm:linearisation-mckean-vlasov}. But since \(W_1, W_2 \in \mathcal{B}_{\mathbf{r}}\), equation \eqref{eq:Br-contained-in-W2infty-ball} shows that
\[
\left\| W_1 + (W_2 - W_1)u \right\|_{W^{2,\infty}} \lesssim \bar{M}.
\] Taking norms in \(X\), using the triangle inequality and Lemma \ref{lem:bound-on-derivatives-of-forward-map} with \(k=3\) and \(\gamma=d\), we obtain
\[
\left\| (D^{2}\rho_{W_2} - D^{2}\rho_{W_1}) [h_1,h_2] \right\|_{X} \leq C(d,T,M,\| \phi \|_{H^{d}}) \left\| h_1 \right\|_{D} \left\| h_2 \right\|_{D} \left\| W_2 - W_1 \right\|_{D}
\] which concludes the proof upon taking suprema over \(h_1,h_2, W_1,W_2\).

\end{proof}

\subsection{Main results}

\subsubsection{Contraction rates}

We first introduce a rate \(\delta_N\), together with its associated exponent \(\eta < 1\):  
\begin{equation}\label{eq:def-slow-rate}
    \delta_N \coloneqq N^{-\frac{\alpha+1}{2(\alpha+1)+d}}, \quad \eta \coloneqq \frac{\beta-2}{\beta} - \frac{3\zeta}{2(\alpha+1)}.
\end{equation} 
Note that due to the choice of constants in \eqref{eq:constants}, \(\eta\) is (strictly) positive. Note also that the rate is such that \(N \delta_N ^{2}\) tends to \(+\infty\).   

We consider a truncated Gaussian prior similar to \cite[Example 2]{mckv} supported on \(\dot{E}_K\):
\begin{equation}\label{eq:prior}
  W(x) = \frac{1}{\sqrt{N}\delta_N}\sum_{\substack{\left| k \right| \leq K \\ k \neq 0}} \frac{1}{ ( 1 + \left| k \right|^{2} )^{(\alpha+1)/2}} g_k \tau_k(x), \qquad g_k \overset{\mathrm{iid}}{\sim} \mathcal{N}(0,1),
\end{equation} 
with \(\tau_k\) from \eqref{eq:trig-basis} and let \(\Pi = \Pi_N \coloneqq \mathrm{Law}(W)\). We observe data from a random design regression model
\begin{equation}\label{eq:data}
    Y_i=\rho_W\left(t_i, X_i\right)+\varepsilon_i, \quad \varepsilon_i \overset{\mathrm{iid}}{\sim} \mathcal{N}(0,1), \quad i=1, \ldots, N
\end{equation}  
where \(\rho_W\) denotes the unique solution to the PDE \eqref{eq:mckv} with initial condition $\phi$, where \(X_i \overset{\mathrm{iid}}{\sim} \mathrm{Unif}(\mathbb{T}^{d})\) and \(t_i \overset{\mathrm{iid}}{\sim} \mathrm{Unif}([0,T])\). The law of the data vector \(Z_N \coloneqq \{(Y_i,t_i,X_i)\}_{i=1}^{N}\) under the parameter \(W\) will be denoted by \(P_W^{N}\). From this data \(Z_N\), the posterior distribution then arises as as in Section 1.2.3 in \cite{nickl2023bayesian},
\begin{equation}\label{eq:posterior}
    \mathop{}\!\mathrm{d} \Pi\left(W \mid Z_N\right) = \frac{e^{\ell_N(W)} \mathop{}\!\mathrm{d} \Pi(W)}{\int_{\dot{E}_K} e^{\ell_N(w)} \mathop{}\!\mathrm{d} \Pi(w)},  \quad \ell_N(W)\coloneqq-\frac{1}{2} \sum_{i=1}^N\left|Y_i-\rho_W\left(t_i, X_i\right)\right|^2,~~~W \in \dot{E}_K.
\end{equation}    

We now introduce two assumptions that one should view as conditions on the couple \((K,N)\) as well as the truth \(W_0\). In high-dimensional settings (\(K\) and therefore \(D(K)\) large), it is natural to expect that \(W_0\) is well approximated by its projection \(W_{0,K}\) onto \(\dot{E}_K\). This can indeed be checked if, for example, we choose the cut-off \(K\) to grow sufficiently fast with \(N\), see Remark \ref{remark:choice-K}, or if \(W_0\) is itself band-limited. Since can be checked by other means we choose to separate the control of the bias from the rest of the argument and present it as an assumption, but see Remark \ref{remark:choice-K}.

\begin{assumption}[Control of the bias]\label{assumption:control-bias}
    Given some \(c_{\mathrm{err}} > 0\), assume 
    \begin{equation}\label{eq:control-bias-forward}
        \left\| \rho_{W_0}-\rho_{W_{0,K}} \right\|_{L^{2}(\mathcal{X},\lambda)} \leq \frac{\delta_N}{2},
    \end{equation} 
    as well as 
    \begin{equation}\label{eq:control-bias-inverse}
        \left\| W_0-W_{0,K} \right\|_{L^{2}} \leq c_{\mathrm{err}} \left( \delta_N \right)^{\eta}.    
    \end{equation}  
\end{assumption}

\begin{assumption}[Prior smoothness]\label{assumption:upper-bound-K}
    Given some \(c_{\mathrm{pr}}>0\), assume  
    \[
    D \leq c_{\mathrm{pr}} N \delta_N^{2}.
    \] 
\end{assumption}

First we prove a suitable version of \cite[Theorem 3]{mckv} to fit the current setting. Namely, we show that the posterior contracts around the ground truth even when restricted to \(W\) of uniformly (in \(K,N\)) bounded Sobolev norm, the so-called `regularisation sets'.

\begin{theorem}[Posterior contraction]\label{thm:posterior-contraction}
    Choose \(\alpha, \beta\) and \(\zeta\) as in \eqref{eq:constants}. Let the initial condition \(\phi\) satisfy Assumptions \ref{assumption:regulatity-mckv} and \ref{assumption:phi} for this choice of \(\beta\) and \(\zeta\). Consider the space \(\dot{E}_K\) of mean-zero trigonometric polynomials of degree at most \(K\) from \eqref{eq:def-E_K}, and the rate \((\delta_N,\eta)\) from \eqref{eq:def-slow-rate}. Let the posterior distribution \(\Pi(\ \cdot \mid Z_N)\) arise as in \eqref{eq:posterior} from prior \(\Pi_N\) in \eqref{eq:prior} and data \(Z_N = \{(Y_i,t_i,X_i)\}_{i=1}^{N}\) in \eqref{eq:data}. Let \(a,M >0\), \(b\) larger than some universal constant, and \(W_0 \in \dot{H}^{\alpha+1}, \left\| W_0 \right\|_{H^{\alpha+1}} \leq M, K \geq 1,N \in \mathbb{N},\) be such that Assumptions \ref{assumption:control-bias} and \ref{assumption:upper-bound-K} hold.
    
    Then, there exists a constant \(m>0\) depending on \(a,b,c_{\mathrm{err}},c_{\mathrm{pr}},\alpha,\beta,d,c_{\ast}, T,M, \phi_{\mathrm{min}},\left\| \phi \right\|_{H^{\beta}}\) such that 
    \[
    P_{W_0}^{N} \left( \Pi(W \in \dot{E}_K: \left\| W-W_0 \right\|_{D} \leq m (\delta_N)^{\eta}, \left\| W \right\|_{H^{\alpha+1}} \leq m | Z_N) \geq 1 - e^{-aN \delta_N^{2}} \right) \geq 1 - e^{-bN \delta_N^{2}}. 
    \]  
\end{theorem}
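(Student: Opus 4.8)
The plan is to apply the general posterior contraction machinery for non-linear inverse problems (as in \cite[Chapter 5]{nickl2023bayesian}, and following the scheme of \cite[Theorem 3]{mckv}), tailored to the finite-dimensional sieve prior supported on $\dot E_K$. The argument decomposes into four ingredients: (i) a standard evidence lower bound, (ii) a $\mathcal G$-valued contraction statement $\|\rho_W - \rho_{W_0}\|_{L^2(\mathcal X,\lambda)} \lesssim (\delta_N)$ with high posterior probability, (iii) control of the posterior on regularisation sets $\{\|W\|_{H^{\alpha+1}} \le m\}$, and (iv) transfer of the forward contraction (ii) into the parameter contraction $\|W - W_0\|_D \lesssim (\delta_N)^\eta$ via the stability estimate of Theorem \ref{thm:mckv-stability-estimate}.

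\textbf{Step 1 (Evidence lower bound).} First I would lower bound $\int_{\dot E_K} e^{\ell_N(w) - \ell_N(W_0)}\,\mathrm d\Pi(w)$. Restricting the integral to a small $L^2$-ball around $W_{0,K}$, one uses the forward Lipschitz estimate (Theorem \ref{thm:lipschitz-continuity-forward-map}) to bound $\|\rho_w - \rho_{W_{0,K}}\|_{L^2(\mathcal X,\lambda)}$, combines this with the bias control \eqref{eq:control-bias-forward}, and then controls the log-likelihood increment via a standard Gaussian concentration argument (as in \cite[Lemma 5.10 or Sec.~7.3]{nickl2023bayesian}): on an event of $P_{W_0}^N$-probability at least $1 - e^{-bN\delta_N^2}$ one gets $\ell_N(w) - \ell_N(W_0) \ge -C N\delta_N^2$ uniformly over that ball. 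The prior mass of the ball is bounded below using the Gaussian small-ball estimate for the prior \eqref{eq:prior}, which in the finite-dimensional setting reduces to a computation with the diagonal covariance; here Assumption \ref{assumption:upper-bound-K} ($D \lesssim N\delta_N^2$) ensures the log prior mass is $\gtrsim -N\delta_N^2$. This yields $\int e^{\ell_N(w) - \ell_N(W_0)}\,\mathrm d\Pi(w) \ge e^{-C' N\delta_N^2}$ with high probability.

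\textbf{Step 2 (Testing / complement is negligible).} Next I would show the posterior numerator over the "bad" set $\{W \in \dot E_K : \|\rho_W - \rho_{W_0}\|_{L^2(\mathcal X,\lambda)} > m' \delta_N\} \cup \{\|W\|_{H^{\alpha+1}} > m\}$ is exponentially smaller, i.e.\ $\lesssim e^{-(C'+a+b)N\delta_N^2}$. This is the usual construction of exponentially powerful tests for the regression model \eqref{eq:data}: for the $L^2(\mathcal X,\lambda)$-separated alternatives one uses likelihood-ratio tests with exponential error bounds (Gaussian noise makes this clean, cf.\ \cite[Sec.~7.3]{nickl2023bayesian} or \cite[Theorem 3]{mckv}), while the entropy of the sieve is controlled because the prior is supported on the $D$-dimensional space $\dot E_K$ with $D \lesssim N\delta_N^2$ — one covers a Sobolev ball in $\dot E_K$ and uses the forward Lipschitz bound to pass to a cover in $L^2(\mathcal X,\lambda)$, keeping the metric entropy $\lesssim N\delta_N^2$. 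The regularisation-set part $\{\|W\|_{H^{\alpha+1}} > m\}$ is handled by a direct prior-tail (Gaussian norm concentration) bound showing $\Pi(\|W\|_{H^{\alpha+1}} > m) \le e^{-(C'+a+b)N\delta_N^2}$ for $m$ large, using that the prior \eqref{eq:prior} places $W$ in $H^{\alpha+1}$ with the $1/(\sqrt N\delta_N)$ scaling chosen precisely so that $\mathbb E\|W\|_{H^{\alpha+1}}^2 \simeq D/(N\delta_N^2) \lesssim 1$. Combining Steps 1 and 2 via the standard quotient argument gives, with $P_{W_0}^N$-probability $\ge 1 - e^{-bN\delta_N^2}$,
\[
\Pi\big(\|\rho_W - \rho_{W_0}\|_{L^2(\mathcal X,\lambda)} \le m'\delta_N,\ \|W\|_{H^{\alpha+1}} \le m \,\big|\, Z_N\big) \ge 1 - e^{-aN\delta_N^2}.
\]

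\textbf{Step 3 (From forward to inverse contraction).} Finally I would convert the $L^2(\mathcal X,\lambda)$-rate on $\rho_W - \rho_{W_0}$ into the $\|\cdot\|_D = \|\cdot\|_{L^2}$-rate on $W - W_0$. On the event of Step 2, any such $W$ has $\|W\|_{W^{2,\infty}} \lesssim M$ (Sobolev embedding into the $W^{2,\infty}$ norm from $H^{\alpha+1}$, using $\alpha+1 > 2 + d/2$), so Theorem \ref{thm:mckv-stability-estimate} applies with $W_1 = W_0$, $W_2 = W$ (after projecting, noting $W \in \dot E_K$): for any $K \ge 1$,
\[
\|W - W_{0,K}\|_{L^2} \le \|W_0 - W_{0,K}\|_{L^2} + S K^{3\zeta/2}\big(\|\rho_W - \rho_{W_0}\|_{L^2_T L^2} + \|\rho_W - \rho_{W_0}\|_{H^1_T H^{-2}}\big).
\]
The $H^1_T H^{-2}$-norm of $\rho_W - \rho_{W_0}$ is itself $\lesssim \|W - W_0\|_{L^2}$ by the forward regularity of Theorem \ref{thm:lipschitz-continuity-forward-map} and Remark \ref{remark:stab-estimate-linear-PDE} (or more simply bounded on the regularisation set), and in any case is $\lesssim \delta_N^\eta$; using $K^d \simeq D \lesssim N\delta_N^2$ one has $K^{3\zeta/2} \lesssim (N\delta_N^2)^{3\zeta/(2d)}$, and the arithmetic in \eqref{eq:constants} — specifically the choices $\eta = \frac{\beta-2}{\beta} - \frac{3\zeta}{2(\alpha+1)}$ and $\zeta < \frac{\alpha+1}{12}$ together with $\delta_N = N^{-(\alpha+1)/(2(\alpha+1)+d)}$ — is designed exactly so that $K^{3\zeta/2}\delta_N \lesssim \delta_N^\eta$. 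Together with the bias bound \eqref{eq:control-bias-inverse} and the triangle inequality $\|W - W_0\|_{L^2} \le \|W - W_{0,K}\|_{L^2} + \|W_0 - W_{0,K}\|_{L^2}$ this yields $\|W - W_0\|_D \le m(\delta_N)^\eta$, which is the claim.

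\textbf{Main obstacle.} The delicate point is the bookkeeping in Step 3: making sure that the deconvolution-induced loss $K^{3\zeta/2}$ in the stability estimate, once $K$ is expressed through the prior-smoothness constraint $D \lesssim N\delta_N^2$, is absorbed into the target rate $\delta_N^\eta$ rather than destroying it. This is precisely why the admissible-constants window \eqref{eq:constants} is imposed, and verifying that it closes — i.e.\ that $\eta > 0$ and that all the exponent inequalities are simultaneously satisfiable — is where the real content lies; the probabilistic Steps 1–2 are essentially the standard Gaussian-regression contraction argument adapted to a $D$-dimensional sieve.
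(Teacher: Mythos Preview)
Your Steps 1 and 2 follow the paper's approach (which invokes \cite[Theorem 2.2.2, Exercise 2.4.3]{nickl2023bayesian} directly rather than rederiving the evidence/testing machinery, but the content is the same). The real gap is in Step 3, in your treatment of the $H^{1}_T H^{-2}$-term appearing in the stability estimate.

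The stability estimate of Theorem \ref{thm:mckv-stability-estimate} requires control of \emph{both} $\|\rho_W-\rho_{W_0}\|_{L^2_TL^2}$ and $\|\rho_W-\rho_{W_0}\|_{H^1_TH^{-2}}$, but Step 2 only delivers the first. Your three suggestions for the second all fail: (a) bounding $\|\rho_W-\rho_{W_0}\|_{H^1_TH^{-2}}\lesssim\|W-W_0\|_{L^2}$ is circular, since $\|W-W_0\|_{L^2}$ is precisely the quantity you are trying to bound; (b) ``bounded on the regularisation set'' gives only a constant $C$, and then $K^{3\zeta/2}\cdot C\to\infty$ as $K\to\infty$, destroying the rate; (c) ``in any case is $\lesssim\delta_N^\eta$'' is asserted without argument. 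The paper's proof closes this gap by \emph{interpolation in time} (as in \cite[Theorem 3]{mckv}): on the regularisation set $\{\|W\|_{H^{\alpha+1}}\le\tilde m\}$ one has a uniform bound in a strong parabolic norm, and interpolating this against the $L^2_TL^2$-bound $\tilde m\,\delta_N$ produces
\[
\|\rho_W-\rho_{W_0}\|_{L^2_TL^2}+\|\rho_W-\rho_{W_0}\|_{H^1_TH^{-2}}\ \lesssim\ \delta_N^{(\beta-2)/\beta},
\]
which is exactly why $\eta$ carries the factor $(\beta-2)/\beta$ in \eqref{eq:def-slow-rate}. Your exponent bookkeeping ``$K^{3\zeta/2}\delta_N\lesssim\delta_N^\eta$'' is numerically true but for the wrong reason: the $(\beta-2)/\beta$ does not come from the raw $\delta_N$-rate but from this interpolation loss, and without the interpolation step you have no legitimate rate on the $H^1_TH^{-2}$-norm to feed into Theorem \ref{thm:mckv-stability-estimate}.
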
 

\begin{remark}[Choice of a cutoff sequence \(K=K_N\)]\label{remark:choice-K} \normalfont

    Let \(M>0\) and \(W_0 \in \dot{H}^{\alpha+1}\) such that \(\| W_0 \|_{H^{\alpha+1}}\leq M\), and recall \(L(d,T,M,\| \phi \|_{H^{d}})\) the Lipschitz constant from Theorem \ref{thm:lipschitz-continuity-forward-map}. Define 
    \[
    c_{\min} \coloneqq (2ML)^{\frac{1}{\alpha+1+\beta+1}}, \quad K_N = c (N \delta_N^{2})^{1/d} \text{ for any } c \geq c_{\min}
    \]
    Then, one checks using the choice of constants in \eqref{eq:constants} as well as Theorem \ref{thm:lipschitz-continuity-forward-map} that for any \(N \in \mathbb N\), the couple \((N,K_N)\) satisfies Assumptions \ref{assumption:control-bias} and \ref{assumption:upper-bound-K} with choices of \(c_{\mathrm{pr}}\) and \(c_{\mathrm{err}}\) depending only on \(\alpha\), \(d\) and \(c\). Therefore, the conclusions of Theorem \ref{thm:posterior-contraction} hold with \(K=K_N\), \(D=D_N\), and for a constant \(m>0\) depending only on \(a,b,\alpha,\beta,d,c_{\ast},T,M,\phi_{\mathrm{min}},\| \phi \|_{H^{\beta}}\).
\end{remark}

\begin{proof}[Proof of Theorem \ref{thm:posterior-contraction}] Let \(a,b,M\) be as in the theorem. We first prove a forward contraction rate, as in \cite[Theorem 1]{mckv}. Namely, there exist \(\tilde{m}(a,b,c_{\mathrm{pr}},\alpha,\beta,d, T,M,\left\| \phi \right\|_{H^{\beta}})>0\) large enough such that
    \[
        P_{W_0}^{N} \left( \Pi( \mathcal{E}_N | Z_N) \geq 1 - e^{-aN \delta_N^{2}} \right) \geq 1 - e^{-bN \delta_N^{2}} 
    \]
    where 
    \[
    \mathcal{E}_N \coloneqq \left\{ W \in \dot{E}_K: \left\| \rho_W-\rho_{W_0} \right\|_{L^{2}([0,T];L^{2})} \leq \tilde{m} \delta_N, \left\| W \right\|_{H^{\alpha+1}} \leq \tilde{m} \right\}. 
    \]
    This bound is a consequence of \cite[Theorem 2.2.2]{nickl2023bayesian} and more specifically \cite[Exercise 2.4.3]{nickl2023bayesian}. The boundedness of \(\rho_W\) is proved as in \cite[Theorem 1]{mckv}, and the Lipschitz continuity of \(W \mapsto \rho_W : \dot{L}^{2}\to L^{2}(\mathcal X)\) follows from Theorem \ref{thm:lipschitz-continuity-forward-map}. Moreover, the regularisation sets can be chosen as above since if \(\| W_1 \|_{L^{2}} \leq M \delta_N\), then for \(\tilde{m}\) large enough,
    \[
    \| W_1 \|_{H^{\alpha+1}} \leq K^{\alpha+1} \| W_1 \|_{L^{2}} \lesssim_d c_{\mathrm{pr}} (N \delta_N^{2})^{\frac{\alpha+1}{d}} M \delta_N  = c_{\mathrm{pr}} M \leq \tilde{m}
    \] where we used that \(W_1 \in \dot{E}_K\) as well as Assumption \ref{assumption:upper-bound-K}, equations (\ref{terminator}), (\ref{eq:equivalence-D-K^d}), and the definition of \(\delta_N\) in \eqref{eq:def-slow-rate}.
   That these events have probability at least \(1 - e^{-bN \delta_N^{2}}\) follows from inspection of the proofs of \cite[Theorems 1.3.2 and 2.2.2]{nickl2023bayesian} as well as \cite[Exercise 5.4.2]{nickl2023bayesian}: in that exercise, \(b\) can be taken as large as desired by letting \(x=b N \sigma^{2} + \log(2)\) in Bernstein's inequality, provided \(K=K(b)\) is large enough. The type-one testing errors in the proof of \cite[Theorem 1.3.2]{nickl2023bayesian} are controlled at the required exponential rate using \cite[Theorem 7.1.4]{nickl2016mathematical} for \(m\) large enough. 
 
    Then, we apply the stability estimate in Theorem \ref{thm:mckv-stability-estimate} to transfer the rate to \(\| W-W_0 \|_{L^{2}}\). We use interpolation in time as in the proof of \cite[Theorem 3]{mckv} to obtain that on the event \(\mathcal{E}_N\) from above,
    \[
    \left\| W-W_0 \right\|_{L^{2}} \lesssim_{d,\beta,T} \left\| W_0-W_{0,K} \right\|_{L^{2}} + \tilde{m}S K^{3\zeta/2} \delta_N^{\frac{(\beta-2)}{\beta}}.
    \] 
    Now use Assumption \ref{assumption:control-bias}, equation \eqref{eq:control-bias-inverse}, and Assumption \ref{assumption:upper-bound-K}, to conclude that for \(m\) large enough, 
    \[
    \left\| W-W_0 \right\|_{L^{2}} \leq m (\delta_N)^{\eta} 
    \] since by definition of \(\eta\) in \eqref{eq:def-slow-rate}, \((N \delta_N^{2})^{\frac{3\zeta}{2d}} \delta_N^{\frac{\beta-2}{\beta}} = \delta_N^{\eta}\). 
\end{proof}

\begin{remark}[Faster contraction rates] \normalfont
    Note that \cite[Theorem 3]{mckv} obtain a faster contraction rate than in the previous proof. While we will not take advantage of this below, let us remark here that our methods can obtain this rate as well, in fact an even slightly faster one via the Lipschitz estimate in Theorem \ref{thm:lipschitz-continuity-forward-map}. Specifically, if one defines a new rate and a new exponent by
    \[
        \tilde{\delta}_N \coloneqq N^{-\frac{\alpha+1+\beta+1}{2(\alpha+1)+2(\beta+1)+d}}, \quad \tilde{\eta} \coloneqq \frac{\beta-2}{\beta} - \frac{3\zeta}{2(\alpha+1)+2(\beta+1)}
    \] 
    then \(W \mapsto \rho_W\) is still Lipschitz continuous from \(\dot{H}^{-(\beta +1)}\) to \(L^{2}(\mathcal X)\) by Theorem \ref{thm:lipschitz-continuity-forward-map}, and if \break \(\| W_1 \|_{H^{-(\beta+1)}} \leq M \tilde{\delta}_N\), then it still holds that for \(\tilde{m}\) large enough,
    \[
    \| W_1 \|_{H^{\alpha+1}} \leq K^{\alpha+1+\beta+1} \| W_1 \|_{H^{-(\beta+1)}} \lesssim_d c_{\mathrm{pr}} (N \tilde{\delta}_N^{2})^{\frac{\alpha+1+\beta+1}{d}} M \tilde{\delta}_N  = c_{\mathrm{pr}} M \leq \tilde{m}
    \] where we used that \(W_1 \in \dot{E}_K\) as well as Assumption \ref{assumption:upper-bound-K} (with \(\tilde{\delta}_N\) in place of \(\delta_N\)) and equations \eqref{terminator}, \eqref{eq:equivalence-D-K^d}. Therefore, provided Assumptions \ref{assumption:control-bias} and \ref{assumption:upper-bound-K} hold with \((\tilde{\delta}_N,\tilde{\eta})\) in place of \((\delta_N,\eta)\), and that the prior \eqref{eq:prior} is rescaled by \(\tilde{\delta}_N\) instead of \(\delta_N\), the conclusions of Theorem \ref{thm:posterior-contraction} hold with the faster rate \((\tilde{\delta}_N)^{\tilde{\eta}}\). In fact this argument also implies the posterior contraction rate $\tilde \delta_N$ for $\|\rho_W-\rho_{W_0}\|_{L^2_TL^2}$ which we believe to be minimax optimal.
\end{remark}

\subsubsection{Log-concave approximation of the posterior}\label{subsubsec:log-concave-approximation}

We recall here the definition of the surrogate posterior \(\tilde{\Pi}(\cdot \mid Z_N)\) exactly as in \cite[Section 5.1.1]{nickl2023bayesian}. Let 
\begin{equation}\label{eq:def-mathbf-r}
    \mathbf{r} \coloneqq r D^{-w} \text{ for some } 0< r \leq r_{\mathrm{max}}
\end{equation} 
with \(r_{\mathrm{max}}\) as in Theorem \ref{thm:local-average-curvature} and \(w\) as in \eqref{eq:constants}. Then, let \(W_{\text{init}} \in \dot{E}_K\) be a `warm-start' initialiser such that 
\begin{equation}\label{eq:Winit}
\left\| W_{\text{init}} - W_{0,K}  \right\|_D \leq \frac{\mathbf{r}}{8}.
\end{equation} 
We require two auxiliary functions: \(g_{\mathbf{r}}\) (with strongly convex tails), and \(\alpha_{\mathbf{r}}\) (a cutoff function) defined as follows. For some smooth and symmetric (about \(0\)) \(\varphi: \mathbb{R} \rightarrow[0, \infty)\) supported in \([-1,1]\) and integrating to \(\int_{\mathbb{R}} \varphi(x) \mathop{}\!\mathrm{d} x=1\), let us first define mollifiers \(\varphi_h(x):= h^{-1} \varphi(x / h), h>0\). Then, define \(\tilde{\gamma}_{\mathbf{r}}, \gamma_{\mathbf{r}}: \mathbb{R} \rightarrow \mathbb{R}\) by

\[
\tilde{\gamma}_{\mathbf{r}}(t):=\left\{\begin{array}{ll}
0 & \text { if } t<5 \mathbf{r} / 8  \\
(t-5 \mathbf{r} / 8)^2 & \text { if } t \geq 5 \mathbf{r} / 8
\end{array}, \quad \gamma_{\mathbf{r}}(t):=\left[\varphi_{\mathbf{r} / 8} \ast \tilde{\gamma}_{\mathbf{r}}\right](t)\right. .
\] 
Further, let \(\alpha:[0, \infty) \rightarrow[0,1]\) be smooth and satisfy \(\alpha(t)=1\) for \(t \in[0,3 / 4]\) and \(\alpha(t)=0\) for \(t \in[7 / 8, \infty)\). Finally we define \(g_{\mathbf{r}}: \dot{E}_K \rightarrow [0, \infty)\) and \(\alpha_{\mathbf{r}}: \dot{E}_K \rightarrow[0,1]\) as

\[
g_{\mathbf{r}}(W):=\gamma_{\mathbf{r}}\left(\left\|W-W_{\text {init }}\right\|_{D}\right), \quad \alpha_{\mathbf{r}}(W)=\alpha\left(\frac{\left\|W-W_{\text {init }}\right\|_{D}}{\mathbf{r}}\right) .
\]
Let \(\lambda_{\min} \coloneqq \max\big(N \log(N) / \mathbf{r}^{2}, CN(c_1 + 1)(1+\mathbf{r}^{-2})\big)\) where \(C\) is chosen as in \cite[Proposition 5.1.2]{nickl2023bayesian} and \(c_1\) was introduced in Proposition \ref{prop:local-regularity}. Now for \(\ell_N\) as in \eqref{eq:posterior} and some \(\lambda \geq \lambda_{\min}\), define the surrogate likelihood function

\[
\tilde{\ell}_N(W)=\alpha_{\mathbf{r}}(W) \ell_N(W)- \lambda g_{\mathbf{r}}(W), \quad W \in \dot{E}_K,
\] which `convexifies' \(-\ell_N\) in the `tails'. The surrogate posterior \(\tilde{\Pi}(\cdot \mid Z_N)\) is then defined via its density
\begin{equation}\label{eq:surrogate-posterior}
    \mathop{}\!\mathrm{d}\tilde{\Pi}(W \mid Z_N) = \frac{e^{\tilde{\ell }_N(W)} \mathop{}\!\mathrm{d}\Pi(W)}{\int_{\dot{E}_K} e^{\tilde{\ell }_N(w)} \mathop{}\!\mathrm{d}\Pi(w)},
\end{equation}
which is globally strongly log-concave. Note that this surrogate depends on the choice of the initialiser \(W_{\mathrm{init}}\), which can be taken to be \(W_{\mathrm{init}} = W_{0,K}\) in the following theorem. For applications to MCMC runtimes below, feasible initialisers need to be found. This question is of independent interest and will be studied elsewhere.

\begin{theorem}[Log-concave Wasserstein approximation]\label{thm:log-concave-approximation}
    Choose \(\alpha, \beta, \zeta\) and \(w\) as in \eqref{eq:constants}. Let the initial condition \(\phi\) satisfy Assumptions \ref{assumption:regulatity-mckv} and \ref{assumption:phi} for this choice of \(\beta\) and \(\zeta\). Let the posterior distribution \(\Pi(\ \cdot \mid Z_N)\) arise as in \eqref{eq:posterior} from prior \(\Pi_N\) in \eqref{eq:prior} and data \(Z_N = \{(Y_i,t_i,X_i)\}_{i=1}^{N}\) in \eqref{eq:data}. Recall the rate \((\delta_N,\eta)\) from \eqref{eq:def-slow-rate} and let the surrogate posterior \(\tilde{\Pi}\left(\cdot \mid Z_N\right)\) be given as in \eqref{eq:surrogate-posterior} with some \(W_{\mathrm{init}}\) verifying \eqref{eq:Winit}. Let \(M>0, \left\| W_0 \right\|_{H^{\alpha+1}} \leq M, K \geq 1, N \in \mathbb{N}, W_0 \in \dot{H}^{\alpha + 1}\) be such that be such that Assumptions \ref{assumption:control-bias} and \ref{assumption:upper-bound-K} hold, and \mbox{\(\| \rho_{W_0} - \rho_{W_{0,K}}\|_{L^{2}([0,T];L^{2})} \leq \mathbf{r}\)} defined in \eqref{eq:def-mathbf-r}. 

    Then, there exist \(C_{\mathrm{pr}}(d,w,\alpha,\beta,\zeta,r)\), some \(b,c >0\) that are uniform in \(W_0, K, N\), and an universal constant \(C>0\) such that if \(c_{\mathrm{pr}}\) in Assumption \eqref{assumption:upper-bound-K} verfies \(c_{\mathrm{pr}} \leq C_{\mathrm{pr}}\),
    \[
    P^{N}_{W_0} \left( \mathcal{W}_2^2\left(\Pi\left(\ \!\cdot \mid Z_N\right), \tilde{\Pi}\left(\ \!\cdot \mid Z_N\right)\right) \leq e^{-N \delta_N^{2}} \right) \geq 1 - C \left( e^{-bN \delta_N^{2}}+e^{-cND^{-12\zeta/d}} \right) \xrightarrow[N\to \infty]{} 1
    \]
    with \(\mathcal{W}_{2}\) the Wasserstein-2 distance defined in \eqref{eq:def-wasserstein}.
\end{theorem}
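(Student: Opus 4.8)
The plan is to reduce the statement to the abstract log-concave Wasserstein approximation theorem underlying \cite[Section~5.1]{nickl2023bayesian} (see also \cite{NicklWang2024_polynomialTimeLangevin,BohrNickl2024}), all of whose hypotheses have by now been assembled in Section~\ref{sec:application-mckv} and the preceding subsections. That abstract result needs three inputs, supplied respectively by: (i) the local average curvature bound $\inf_{W\in\mathcal B_{\mathbf{r}}}\lambda_{\min}(\mathbb E_{W_0}[-\nabla^2\ell(W)])\ge c_0 D^{-6\zeta/d}$ of Theorem~\ref{thm:local-average-curvature} (playing the role of the curvature constant $\kappa\simeq D^{-6\zeta/d}$); (ii) the local regularity bound $\sup_{t,x}\|\mathcal G^{t,x}\|_{C^{2,1}(\mathcal B_{\mathbf{r}})}\le c_1$ of Proposition~\ref{prop:local-regularity}; and (iii) the posterior contraction statement of Theorem~\ref{thm:posterior-contraction}, which places posterior mass $\ge 1-e^{-aN\delta_N^2}$ on $\{\|W-W_0\|_D\le m(\delta_N)^\eta,\ \|W\|_{H^{\alpha+1}}\le m\}$ on a $P_{W_0}^N$-event of probability $\ge 1-e^{-bN\delta_N^2}$. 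The warm-start condition \eqref{eq:Winit} and the choice $\lambda\ge\lambda_{\min}$ in the surrogate \eqref{eq:surrogate-posterior} are exactly the ones required by that construction. Once the compatibility of the exponents is verified, the conclusion is a direct quotation; the rest of this sketch indicates why the exponents match, which is where \eqref{eq:constants} and the smallness of $c_{\mathrm{pr}}$ enter.

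For the internal mechanism I would argue as follows. Set $A:=\{W\in\dot E_K:\|W-W_{\mathrm{init}}\|_D\le \mathbf{r}/2\}$; on $A$ one has $\alpha_{\mathbf{r}}\equiv 1$ and $g_{\mathbf{r}}\equiv 0$, so $\tilde\ell_N\equiv\ell_N$ there, whence $\Pi(\cdot\mid Z_N)$ and $\tilde\Pi(\cdot\mid Z_N)$, restricted to $A$ and renormalised, coincide. First, on the event of Theorem~\ref{thm:posterior-contraction}, combining the posterior mass bound with the bias bound \eqref{eq:control-bias-inverse} and \eqref{eq:Winit} and using $(\delta_N)^\eta\ll\mathbf{r}=rD^{-w}$ --- valid because $wd<\eta(\alpha+1)=\tfrac{(\alpha+1)(\beta-2)}{\beta}-\tfrac{3\zeta}{2}$ by \eqref{eq:constants} together with Assumption~\ref{assumption:upper-bound-K} --- every $W$ with $\|W-W_0\|_D\le m(\delta_N)^\eta$ lies in $A$ for $N$ large, so $\Pi(A^c\mid Z_N)\le e^{-aN\delta_N^2}$. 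Second, on a further $P_{W_0}^N$-event of probability $\ge 1-e^{-cND^{-12\zeta/d}}$, a matrix-Bernstein estimate combined with a covering of $\mathcal B_{\mathbf{r}}\subset\mathbb R^D$ controlled through Proposition~\ref{prop:local-regularity} upgrades Theorem~\ref{thm:local-average-curvature} to the pointwise bound $-\nabla^2\ell_N(W)\succeq\tfrac12 Nc_0D^{-6\zeta/d}I$ uniformly over $\mathcal B_{\mathbf{r}}$; the $e^{D}$ covering factor is absorbed into $e^{cND^{-12\zeta/d}}$ precisely when $c_{\mathrm{pr}}$ is small and $12\zeta<2(\alpha+1)$ (guaranteed by $\zeta<\tfrac{\alpha+1}{12}$). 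On this event $-\tilde\ell_N$ together with the Gaussian prior potential is globally strongly log-concave with curvature $\gtrsim ND^{-6\zeta/d}$, and its centre lies within $O(\mathbf{r})$ of $W_{\mathrm{init}}$; the constraint $wd<\alpha+1-6\zeta$ of \eqref{eq:constants} then ensures that $\tilde\Pi(\cdot\mid Z_N)$ concentrates at a scale much smaller than $\mathbf{r}$, giving $\tilde\Pi(A^c\mid Z_N)\le e^{-c'ND^{-6\zeta/d-2w}}$.

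To conclude, since the two posteriors agree up to normalisation on $A$ one has $\|\Pi(\cdot\mid Z_N)-\tilde\Pi(\cdot\mid Z_N)\|_{\mathrm{TV}}\le 2\big(\Pi(A^c\mid Z_N)+\tilde\Pi(A^c\mid Z_N)\big)$, and I would combine this with the elementary interpolation bound
\[
\mathcal W_2^2(\mu,\nu)\ \lesssim\ \Big(\textstyle\int\|W-W_{\mathrm{init}}\|_D^4\,d\mu+\int\|W-W_{\mathrm{init}}\|_D^4\,d\nu\Big)^{1/2}\,\|\mu-\nu\|_{\mathrm{TV}}^{1/2}.
\]
The fourth moment of $\tilde\Pi(\cdot\mid Z_N)$ is polynomial in $N,D$ by strong log-concavity, while that of $\Pi(\cdot\mid Z_N)$ is at most $e^{CN\delta_N^2}$ times the ($O(1)$) fourth moment of the Gaussian prior, using $\ell_N\le 0$ and the standard evidence lower bound $\int e^{\ell_N}\,d\Pi\ge e^{-CN\delta_N^2}$, which holds on an event of probability $\ge 1-e^{-b'N\delta_N^2}$ by the Gaussian prior small-ball estimate and the forward Lipschitz bound of Theorem~\ref{thm:lipschitz-continuity-forward-map}. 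Hence $\mathcal W_2^2(\Pi(\cdot\mid Z_N),\tilde\Pi(\cdot\mid Z_N))\lesssim e^{CN\delta_N^2/2}\big(e^{-aN\delta_N^2/2}+e^{-c'ND^{-6\zeta/d-2w}/2}\big)$; taking $a$ large (free in Theorem~\ref{thm:posterior-contraction}) and using $D^{-6\zeta/d-2w}\gg\delta_N^2$ (a consequence of $6\zeta+2wd<2(\alpha+1)$, itself implied by $wd<\alpha+1-6\zeta$) makes this $\le e^{-N\delta_N^2}$ for $N$ large, a trivial adjustment of constants handling small $N$. Collecting the failure probabilities of the three data-dependent events gives the advertised lower bound $1-C(e^{-bN\delta_N^2}+e^{-cND^{-12\zeta/d}})$.

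The principal difficulty is not any single estimate but the \emph{simultaneous} calibration of the exponents $\alpha,\beta,\zeta,w$: one needs $(\delta_N)^\eta$ small relative to $\mathbf{r}=rD^{-w}$ so that the contracting posterior mass lands inside the region where $\tilde\ell_N=\ell_N$; one needs $ND^{-6\zeta/d}$ to dominate both the $e^{D}$ covering loss in the Hessian concentration and the $N\delta_N^2$ loss from the evidence lower bound; and one needs the surrogate to concentrate well inside $\mathcal B_{\mathbf{r}}$ --- and it is exactly the (non-empty) system \eqref{eq:constants}, together with the smallness of $c_{\mathrm{pr}}$ in Assumption~\ref{assumption:upper-bound-K}, that renders these requirements mutually compatible. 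A secondary, routine care point is aligning the present notation ($\kappa$, $\mathbf{r}$, $c_1$, the warm start \eqref{eq:Winit}, the prior rescaling by $\delta_N$) with the hypotheses of the abstract theorem in \cite[Chapter~5]{nickl2023bayesian}.
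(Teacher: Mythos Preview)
Your proposal is correct and follows the same route as the paper: both reduce Theorem~\ref{thm:log-concave-approximation} to the abstract log-concave Wasserstein approximation result \cite[Theorem~5.1.3]{nickl2023bayesian} by verifying its hypotheses via Theorem~\ref{thm:local-average-curvature} (curvature, giving $\kappa_0=6\zeta/d$), Proposition~\ref{prop:local-regularity} (local $C^{2,1}$-regularity, giving $\kappa_2=0$), Theorem~\ref{thm:posterior-contraction} (replacing Condition~2.1.4), and the constraint system \eqref{eq:constants} together with smallness of $c_{\mathrm{pr}}$ to ensure $\mathbf{r}\ge\tilde\delta_N\log N$. The paper's proof is a terse checklist of these conditions, whereas you go further and sketch the internal mechanism of the cited abstract theorem (the matrix-Bernstein step, the TV--$\mathcal W_2$ interpolation via fourth moments, the evidence lower bound); this extra layer is sound but not needed here, since the paper simply quotes the conclusion of \cite[Theorem~5.1.3]{nickl2023bayesian} once the conditions are in place.
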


\begin{proof}[Proof of Theorem \ref{thm:log-concave-approximation}]
    We will check that the assumptions of \cite[Theorem 5.1.3]{nickl2023bayesian} are verified for \(\mathbf{r} = rD^{-w}\) where \(0< r \leq r_{\mathrm{max}}\) as in Theorem \ref{thm:local-average-curvature}. We start by verifying the subconditions of Condition 5.1.1 in \cite{nickl2023bayesian} with \(\Theta = \mathscr{R} = H^{\alpha+1}\) with \(\alpha+1 > 2 + d/2 > 1 + d/2\).

    Condition 2.1.1 is satisfied for \(\kappa=\beta+1\) by \cite[Theorem 5]{mckv} (combined with the usual Sobolev embeddings) and Theorem \ref{thm:lipschitz-continuity-forward-map}, and therefore holds with \(\kappa=0\) as well.

    Condition 2.1.4 does not hold formally due to the deconvolution nature of the problem. However, it can be replaced by Theorem \ref{thm:posterior-contraction} which provides a contraction rate of \((\delta_N)^{\eta}\), with also \(0 < \eta \leq 1\) due to the choice of constants in \eqref{eq:constants}. In the proof of \cite[Theorem 5.1.3]{nickl2023bayesian}, it suffices to replace the first display in the bound of Term III by the result of Theorem \ref{thm:posterior-contraction} to obtain to the same conclusion.

    Condition 3.2.1 holds for \(\kappa_2 = 0\) as seen in Proposition \ref{prop:local-regularity}. 
    
    Condition 3.2.2 holds for \(\kappa_0 = 6\zeta/d\) and \(\kappa_1 = 0\) by Theorem \ref{thm:local-average-curvature} and the remark below \cite[Condition 3.2.2]{nickl2023bayesian}, also since \mbox{\(\| \rho_{W_0} - \rho_{W_{0,K}}\|_{L^{2}([0,T];L^{2})} \leq \mathbf{r}\)} by assumption.
    
    We now check equation (5.5) in \cite[Condition 5.1.1]{nickl2023bayesian}. By the choice of constants in \eqref{eq:constants} and Assumption \ref{assumption:upper-bound-K},
    \[
    \log(N)^{2} (\delta_N)^{\eta} D^{w} \xrightarrow[N \to +\infty]{} 0, \quad \log(N)^{2} \delta_N D^{w+6\zeta/d} \xrightarrow[N \to +\infty]{} 0
    \] and in particular there exists \(N_{\min}(d,w,\alpha,\beta,\zeta,r)\) such that for all \(N \geq N_{\min}\), \(\mathbf{r} \geq \tilde{\delta}_N\log N\). Then, one can choose \(C_{\mathrm{pr}}(d,w,\alpha,\beta,\zeta,r)\) small enough to control the terms \(N \leq N_{\mathrm{min}}\) so that whenever \(c_{\mathrm{pr}} \leq C_{\mathrm{pr}}\), \(\mathbf{r} \geq \tilde{\delta}_N\log N\) hold for all \(N \in \mathbb{N}\). 

    Finally, the condition \(\lambda \geq N \log(N) D^{\kappa_1}/\mathbf{r}^{2}\) in \cite[Theorem 5.1.3]{nickl2023bayesian} (in this reference, \(\lambda\) is denoted by \(K\)) is satisfied by construction of the surrogate in \eqref{eq:surrogate-posterior}. Note also that \(ND^{-12\zeta/d} \xrightarrow[N \to \infty]{} +\infty\) by Assumption \ref{assumption:upper-bound-K} and the choice of constants in \eqref{eq:constants}.  
\end{proof}

\subsubsection{Polynomial mixing time bounds for warm start ULA}

We start with a brief description of the Unadjusted Langevin Algorithm (ULA),  studied in Theorem \ref{thm:wasserstein-mixing-time} below. Given a target measure with positive density \(p\) on \(\mathbb{R}^{D}\), the algorithm comes from a Euler-Maruyama discretisation of the Langevin diffusion 
\[
\mathop{}\!\mathrm{d} L_t = \nabla \log p(L_t) \mathop{}\!\mathrm{d}t + \sqrt{2} \mathop{}\!\mathrm{d} B_t
\] 
which is a continuous-time Markov process with invariant measure having density \(p\). In our case, the density of interest is \(p = \mathop{}\!\mathrm{d} \tilde{\Pi}_N\), i.e. the density of the surrogate posterior distribution in \eqref{eq:surrogate-posterior}. Since this density is log-concave, we can obtain mixing time bounds using the existing theory for ULA as in \cite{DurmusMoulines2019}. Then, we can use that the surrogate posterior approximates the true posterior in Wasserstein distance (Theorem \ref{thm:log-concave-approximation}) to obtain mixing time bounds for the true posterior as well. This is the object of Theorem \ref{thm:wasserstein-mixing-time} below.

Under the usual identification of \(\dot{E}_K\) with \(\mathbb{R}^{D}\), we can see the prior \(W\) in \eqref{eq:prior} as a \(D\)-dimensional Gaussian vector with covariance matrix 
\begin{equation}\label{eq:prior-gaussian-vector}
\Sigma \coloneqq (N \delta_N^{2})^{-1}\Sigma', \quad W \sim \mathcal{N}(0,\Sigma)    
\end{equation} 
where \(\Sigma'\) is a diagonal matrix with diagonal entries \((1+| k |^{2})^{-(\alpha+1)}\) for \(k \in \mathbb{Z}^{d} \setminus\{0\}\), \(\left| k \right| \leq K\), counted with multiplicities. The surrogate posterior probability density is then given by
\[
\mathop{}\!\mathrm{d} \tilde{\Pi}_N (W) = \frac{1}{Z} e^{\tilde{\ell}_N(W)-\frac{1}{2} W^{T} \Sigma^{-1}W}, \quad W \in \mathbb{R}^{D}
\]     
for normalizing constant \(Z\). For some initializer \(W_{\mathrm{init}}\) verifying \eqref{eq:Winit} and step size \(\gamma > 0\), we define the ULA Markov chain \((\vartheta_k)_{k \geq 0}=(\vartheta_k(W_{\mathrm{init}}, \gamma))_{k \geq 0}\) by 
\begin{equation}\label{eq:ula-markov-chain}
   \begin{aligned}
\vartheta_0 & =W_{\mathrm{init}}, \\
\vartheta_{k+1} & =\vartheta_k+\gamma\left[\nabla \tilde{\ell}_N\left(\vartheta_k\right)-\Sigma^{-1} \vartheta_k\right]+\sqrt{2 \gamma} \xi_{k+1} \quad \text { for } k=0,1, \ldots
\end{aligned} 
\end{equation} 
where \(\xi_k \stackrel{\text { i.i.d. }}{\sim} N\left(0, I_{D}\right)\). Note that the drift term in brackets is exactly \(\nabla \log (\mathop{}\!\mathrm{d} \tilde{\Pi}_N(W))\). 

\begin{theorem}[Wasserstein mixing time]\label{thm:wasserstein-mixing-time}
    Suppose that the hypotheses of Theorem \ref{thm:log-concave-approximation} hold, and that there exists a feasible initializer \(W_{\mathrm{init}}\) satisfying \eqref{eq:Winit} for the Markov chain \((\vartheta_k)_{k \geq 0}\) in \eqref{eq:ula-markov-chain}. Assume also that Assumption \ref{assumption:upper-bound-K} holds for some \(0<c_{\mathrm{pr}} \leq C_{\mathrm{pr}}\) from Theorem \ref{thm:log-concave-approximation}.
    
    Then, for any desired accuracy \(\varepsilon >0\), there exists a step size \(\gamma=\gamma(\varepsilon,D,N)\) and an integer \mbox{\(k_{\mathrm{mix}}=O(\varepsilon^{-b_1} N^{b_2} D^{b_3})\)} scaling polynomially in \(\varepsilon^{-1},D\) and \(N\), such that for all \(k \geq k_{\mathrm{mix}}\),
    \[
    P_{W_0}^{N} \left( \mathcal{W}_2^{2}(\mathcal{L}(\vartheta_k), \Pi(\ \! \cdot \mid Z_N)) \leq e^{-N^{d/(2(\alpha+1)+d)}} + \varepsilon  \right) \geq 1 - C \left( e^{-bN \delta_N^{2}} + e^{-cND^{-12\zeta/d}} \right) \xrightarrow[N\to \infty]{}1 
    \] for some \(b_1, b_2, b_3, b, c, C>0\).  
\end{theorem}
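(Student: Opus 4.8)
The plan is to bound the Wasserstein distance to the true posterior by a triangle inequality,
\[
\mathcal{W}_2^2(\mathcal{L}(\vartheta_k), \Pi(\cdot\mid Z_N)) \le 2\,\mathcal{W}_2^2(\mathcal{L}(\vartheta_k), \tilde{\Pi}_N) + 2\,\mathcal{W}_2^2(\tilde{\Pi}_N, \Pi(\cdot\mid Z_N)),
\]
and to control the two terms separately. The second term is $\lesssim e^{-N\delta_N^2}$ on the event furnished by Theorem \ref{thm:log-concave-approximation}, which has exactly the probability stated in the present theorem; since $N\delta_N^2 = N^{d/(2(\alpha+1)+d)}$ by the definition of $\delta_N$ in \eqref{eq:def-slow-rate}, this produces the term $e^{-N^{d/(2(\alpha+1)+d)}}$ (absorbing the multiplicative constant by applying Theorem \ref{thm:log-concave-approximation} with a slightly smaller exponent). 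It then remains only to make $\mathcal{W}_2^2(\mathcal{L}(\vartheta_k), \tilde{\Pi}_N) \le \varepsilon/2$ for $k$ at most polynomially large, so one may replace $\varepsilon$ by $\varepsilon/4$ at the outset and work with the globally log-concave surrogate $\tilde\Pi_N$ only.

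The key bookkeeping step is to record the quantitative parameters of $\tilde\Pi_N$. On the event above, intersected with the events on which the empirical Hessian of $\ell_N$ concentrates around $\mathbb{E}_{W_0}[\nabla^2\ell]$ uniformly over $\mathcal{B}_{\mathbf r}$ (the events already used in the proof of \cite[Theorem 5.1.3]{nickl2023bayesian}, via the $C^{2,1}$-bound $c_1$ of Proposition \ref{prop:local-regularity} and a covering argument), the density $\mathrm{d}\tilde{\Pi}_N(W)\propto e^{\tilde{\ell}_N(W)-\frac12 W^T\Sigma^{-1}W}$ is $\mu$-strongly log-concave with $\mu \gtrsim N D^{-6\zeta/d}$: inside $\mathcal{B}_{\mathbf r}$ this is the $N$-fold concentrated version of the local average curvature bound of Theorem \ref{thm:local-average-curvature}, while outside it the strong convexity comes from the terms $-\lambda g_{\mathbf r}$ and $-\tfrac12 W^T\Sigma^{-1}W$ with $\lambda\ge\lambda_{\min}$. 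Simultaneously $\nabla\log\tilde{\Pi}_N$ is $L$-Lipschitz with $L \le \mathrm{poly}(N,D)$ — using $\|\Sigma^{-1}\|\lesssim N\delta_N^2 K^{2(\alpha+1)}$, the bound $c_1$ summed over the $N$ observations, and $\lambda\le\lambda_{\min}\simeq\mathrm{poly}(N,D)$ since $\mathbf r = rD^{-w}$ — and the warm start \eqref{eq:Winit} together with concentration of $\tilde{\Pi}_N$ in a ball of radius $\lesssim \mathbf r$ about $W_{0,K}$ yields $\mathcal{W}_2^2(\delta_{W_{\mathrm{init}}},\tilde{\Pi}_N) \le \mathrm{poly}(N,D)$.

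With these three polynomial bounds in hand, I would invoke a standard convergence estimate for ULA targeting a strongly log-concave density (e.g. \cite{DurmusMoulines2019,DurmusMoulines2017}): for step size $\gamma \le c/L$,
\[
\mathcal{W}_2^2(\mathcal{L}(\vartheta_k), \tilde{\Pi}_N) \lesssim (1-\mu\gamma)^{2k}\,\mathcal{W}_2^2(\delta_{W_{\mathrm{init}}},\tilde{\Pi}_N) + \frac{L^2 D}{\mu}\,\gamma.
\]
Choosing $\gamma \simeq \mu\varepsilon/(L^2 D)$ makes the bias term $\le\varepsilon/4$, and then any $k \ge k_{\mathrm{mix}}\simeq (\mu\gamma)^{-1}\log\big(\mathcal{W}_2^2(\delta_{W_{\mathrm{init}}},\tilde{\Pi}_N)/\varepsilon\big)\simeq (L^2 D/\mu^2)\,\varepsilon^{-1}\log(\cdots)$ makes the transient term $\le\varepsilon/4$ as well. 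Substituting $\mu\gtrsim ND^{-6\zeta/d}$ and the polynomial bounds on $L$ and on the warm-start distance shows that $\gamma=\gamma(\varepsilon,D,N)$ and $k_{\mathrm{mix}}=O(\varepsilon^{-b_1}N^{b_2}D^{b_3})$ for explicit exponents $b_1,b_2,b_3>0$. Combining with the first paragraph gives $\mathcal{W}_2^2(\mathcal{L}(\vartheta_k),\Pi(\cdot\mid Z_N)) \le \varepsilon + e^{-N^{d/(2(\alpha+1)+d)}}$ on the stated event, as desired.

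I expect the main obstacle to be the second step: assembling from \cite[Theorem 5.1.3]{nickl2023bayesian}, Theorem \ref{thm:local-average-curvature} and Proposition \ref{prop:local-regularity} a clean statement that $\tilde{\Pi}_N$ is strongly log-concave with a curvature that is only \emph{polynomially} small in $D$ — this is precisely where the ill-posedness of the underlying deconvolution, encoded by the exponent $6\zeta/d$, enters and must be kept under control — together with the matching polynomial upper bounds on the gradient-Lipschitz constant and on the warm-start Wasserstein distance. Everything else (the triangle inequality, the off-the-shelf ULA mixing rate, and the arithmetic producing $b_1,b_2,b_3$) is routine.
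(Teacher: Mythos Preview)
Your proposal is correct and follows essentially the same approach as the paper: the paper's proof simply invokes \cite[Theorems 3.2.3 and 5.2.1]{nickl2023bayesian} as black boxes (having verified their hypotheses in the proof of Theorem \ref{thm:log-concave-approximation}), whereas you have unpacked exactly what is inside those theorems --- the triangle inequality through the surrogate, the strong log-concavity constant $\mu\gtrsim ND^{-6\zeta/d}$ from Theorem \ref{thm:local-average-curvature}, the polynomial gradient-Lipschitz bound from Proposition \ref{prop:local-regularity} and the prior covariance, and the standard ULA mixing estimate of \cite{DurmusMoulines2019}. One small slip: you wrote $\lambda\le\lambda_{\min}$ when you meant to take $\lambda=\lambda_{\min}$ (the surrogate requires $\lambda\ge\lambda_{\min}$, and one fixes $\lambda$ at this minimal polynomial value to keep the Lipschitz constant polynomial).
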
 

\begin{proof}[Proof of Theorem \ref{thm:wasserstein-mixing-time}]
    Note that since Assumption \ref{assumption:upper-bound-K} with the choice of constants in \eqref{eq:constants} implies that \(D \lesssim ND^{-12\zeta/d}\), the conclusions of \cite[Theorem 5.2.1]{nickl2023bayesian} hold (also since our choice of \(\lambda\) in the construction of the surrogate is large enough). We choose the event \(\mathscr{E} = \mathscr{E}_{\mathrm{conv}} \cap \mathscr{E}_{\mathrm{wass}}(\rho)\) with \(\rho=e^{-N \delta_N^{2}}\) and apply this theorem. By taking \(\gamma\) small enough and then \(k\) large enough, we can make the right-hand side of equation (5.21) in \cite[Theorem 5.2.1]{nickl2023bayesian} smaller than \(\rho+\varepsilon\). The lower bound \(k_{\mathrm{mix}}\) grows polynomially in \(N\) and \(D\) since the eigenvalues of \(\Sigma\) defined in \eqref{eq:prior-gaussian-vector} scale polynomially in \(N\) and \(D\), see also \eqref{eq:equivalence-D-K^d}.

    Therefore, the proof will be complete if we can show that \(\mathscr{E}\) holds with the required probability. By the same observation that \(D \lesssim ND^{-12\zeta/d}\), the conclusions of \cite[Theorem 3.2.3]{nickl2023bayesian} hold as well. Therefore, using also Theorem \ref{thm:log-concave-approximation},
    \[
    P_{W_0}^{N}(\mathscr{E}_{\mathrm{conv}} \cap \mathscr{E}_{\mathrm{wass}}(e^{-N \delta_N^{2}})) \geq 1 - C \left( e^{-bN \delta_N^{2}} + e^{-cND^{-12\zeta/d}} + e^{-N/8} \right).
    \] 
    The last term is negligible and can be absorbed in the constant \(C\). This proves the theorem.
\end{proof}

\begin{remark} \normalfont
    Theorem \ref{thm:wasserstein-mixing-time} also informs the complexity of computing the posterior mean vector \break \(\Pi(W \mid Z_N)\). In fact, provided a feasible initialiser exists, any Lipschitz functional of the posterior can be approximated in polynomial time by ergodic averages along the chain \(\vartheta_k\) of the form 
    \[
    \frac{1}{J} \sum_{k=J_{\mathrm{in}}+1}^{J_{\mathrm{in}}+J} H(\vartheta_k)
    \] where \(J_{\mathrm{in}}\) is a burn-in period and \(H\) is a Lipschitz functional of interest. The details are laid out in \cite[Theorem 5.2.2]{nickl2023bayesian}.
\end{remark}

\appendix

\section{PDE estimates for a linearised McKean--Vlasov equation}\label{appendix:parabolic-regularity}

Throughout this section, we consider existence and regularity for the following Cauchy problem on \mbox{\(\mathcal{X} = [0,T] \times \mathbb{T}^{d}\)} for some fixed \(T>0\):
\begin{equation}\label{eq:linPDE}
    \left( \partial _{t} - \mathcal{L}_{W} \right) u = f, \quad u(0) = 0
\end{equation} 
with \textit{linear} differential operator $$\mathcal{L}_W= \Delta [\cdot ] + \nabla \cdot ([\cdot ] \nabla W \ast \rho_W) + \nabla \cdot (\rho_W \nabla W \ast [\cdot ]).$$ Here we assume \(\| W \|_{W^{2,\infty}} \leq M\) for some fixed \(M>0\), and $\rho_W$ is the given solution to the non-linear PDE (\ref{eq:mckv}) -- in fact $\rho_W$ could be replaced by any function with an appropriate bound on its Sobolev norm. The source function \(f\) depends on time and space, with conditions to be specified. We start with an auxiliary lemma.

\begin{lemma}\label{lem:trilin-op-T-bound}
   Recall from \eqref{eq:def-Tcal} that the trilinear operator \(\mathcal{T}\) is defined by \(\mathcal{T}(f,V,g) \coloneqq \nabla \cdot \left( f \nabla V \ast g \right) \). For any \(f,V,g\) such that the expressions below make sense, we have the following estimates:

   \begin{enumerate}
        \item For any integers \(k \geq 0\), \(0 \leq l \leq k\), and any \(\nu \in \mathbb R\), 
    \begin{equation}\label{eq:trilin-leibniz}
    \left\| \mathcal{T}(f,V,g) \right\|_{H^{k-1}} \lesssim_{k,d} \mathrm{min} \left\{ \left\| \nabla V \right\|_{W^{l,\infty}} \left\| f \right\|_{H^{k}}\left\| g \right\|_{W^{k-l,1}} , \left\| V \right\|_{H^{\nu}} \left\| f \right\|_{W^{k,\infty}} \left\| g \right\|_{H^{k+1-\nu}} \right\}.
    \end{equation}
    In particular, since \(\mathbb{T}^{d}\) is bounded,
    \[
    \left\| \mathcal{T}(f,V,g) \right\|_{H^{k-1}} \lesssim_{k,d} \left\| \nabla V \right\|_{W^{l,\infty}} \left\| f \right\|_{H^{k}}\left\| g \right\|_{H^{k-l}}.
    \]   

    \item For any real \(\gamma>\frac{d}{2}\) and \(\nu\),
   \begin{equation}\label{eq:trilin-algebra}
    \left\| \mathcal{T}(f,V,g) \right\|_{H^{\gamma-1}} \lesssim_{\gamma,d} \left\| f \right\|_{H^{\gamma }} \left\| g \right\|_{H^{\gamma+1-\nu}} \left\| V \right\|_{H^{\nu}}.
   \end{equation}  

        \item For any real \(\nu\),
\begin{equation}\label{eq:trilin-negative}
   \left\| \mathcal{T}(f,V,g) \right\|_{H^{-2}} \leq \left\| f \right\|_{W^{1,\infty}} \left\| V \right\|_{H^{-\nu}} \left\| g \right\|_{H^{\nu}}.
\end{equation} 
   \end{enumerate}
\end{lemma}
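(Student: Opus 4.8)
The plan is to reduce all three estimates to a couple of elementary facts and then distribute derivatives appropriately in each case. Writing $h \coloneqq \nabla V \ast g = \nabla(V\ast g)$, one has $\mathcal{T}(f,V,g) = \nabla\cdot(fh)$, so since $\nabla\cdot$ maps $H^{s}$ boundedly into $H^{s-1}$ it suffices to estimate $\|fh\|_{H^{s}}$ with $s=k$ for \eqref{eq:trilin-leibniz}, $s=\gamma$ for \eqref{eq:trilin-algebra}, and $s=-1$ for \eqref{eq:trilin-negative}. The second tool is the Fourier convolution bound: since $\widehat{V\ast g}_m=\hat V_m\hat g_m$, splitting the weight $(1+|m|^{2})^{s}=(1+|m|^{2})^{a}(1+|m|^{2})^{b}$ with $a+b=s$ and bounding one of the two resulting factors by the full sum gives $\|V\ast g\|_{H^{s}}\le\|V\|_{H^{a}}\|g\|_{H^{b}}$; applied to $h=\nabla(V\ast g)$ this yields $\|h\|_{H^{s}}\lesssim_{d}\|V\|_{H^{\nu}}\|g\|_{H^{s+1-\nu}}$ for any real $\nu$. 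The third tool, used only for the products, is the Leibniz/Moser estimate $\|fh\|_{H^{k}}\lesssim_{k,d}\min\{\|f\|_{H^{k}}\|h\|_{W^{k,\infty}},\ \|f\|_{W^{k,\infty}}\|h\|_{H^{k}}\}$ for integer $k\ge0$ (from $\partial^{\alpha}(fh)=\sum_{\beta\le\alpha}\binom{\alpha}{\beta}\partial^{\beta}f\,\partial^{\alpha-\beta}h$ and H\"older), together with the Banach-algebra bound $\|fh\|_{H^{\gamma}}\lesssim_{\gamma,d}\|f\|_{H^{\gamma}}\|h\|_{H^{\gamma}}$ valid for $\gamma>d/2$.

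Granting these, the second bound in \eqref{eq:trilin-leibniz} follows from $\|\mathcal{T}(f,V,g)\|_{H^{k-1}}\lesssim\|fh\|_{H^{k}}\lesssim_{k,d}\|f\|_{W^{k,\infty}}\|h\|_{H^{k}}\lesssim_{k,d}\|f\|_{W^{k,\infty}}\|V\|_{H^{\nu}}\|g\|_{H^{k+1-\nu}}$. For the first bound I would instead estimate $\|h\|_{W^{k,\infty}}$ directly: for a multi-index $\alpha$ with $|\alpha|=m\le k$, write $\partial^{\alpha}h=(\partial^{\beta}\nabla V)\ast(\partial^{\alpha-\beta}g)$ with $|\beta|=\min(l,m)\le l$, so that Young's inequality gives $\|\partial^{\alpha}h\|_{L^{\infty}}\le\|\partial^{\beta}\nabla V\|_{L^{\infty}}\|\partial^{\alpha-\beta}g\|_{L^{1}}\le\|\nabla V\|_{W^{l,\infty}}\|g\|_{W^{k-l,1}}$ (using $|\alpha-\beta|=m-\min(l,m)\le k-l$); hence $\|h\|_{W^{k,\infty}}\lesssim_{k,d}\|\nabla V\|_{W^{l,\infty}}\|g\|_{W^{k-l,1}}$ and $\|\mathcal{T}(f,V,g)\|_{H^{k-1}}\lesssim\|fh\|_{H^{k}}\lesssim_{k,d}\|f\|_{H^{k}}\|h\|_{W^{k,\infty}}$ closes the bound. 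The ``in particular'' statement is then immediate: $\mathbb{T}^{d}$ has finite measure, so $\|g\|_{W^{k-l,1}}\lesssim_{k,d}\|g\|_{H^{k-l}}$. Part \eqref{eq:trilin-algebra} is the same computation with the algebra bound in place of Leibniz: $\|\mathcal{T}(f,V,g)\|_{H^{\gamma-1}}\lesssim\|fh\|_{H^{\gamma}}\lesssim_{\gamma,d}\|f\|_{H^{\gamma}}\|h\|_{H^{\gamma}}\lesssim_{\gamma,d}\|f\|_{H^{\gamma}}\|V\|_{H^{\nu}}\|g\|_{H^{\gamma+1-\nu}}$, using $\gamma>d/2$.

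For \eqref{eq:trilin-negative} I would argue by duality against $H^{2}$ test functions: integrating by parts once and using that $f$ is real-valued (so multiplication by $f$ moves inside the pairing), $\|\mathcal{T}(f,V,g)\|_{H^{-2}}=\sup_{\|\varphi\|_{H^{2}}\le1}|\langle \nabla\cdot(fh),\varphi\rangle|=\sup_{\|\varphi\|_{H^{2}}\le1}|\langle h,f\nabla\varphi\rangle|\le\|h\|_{H^{-1}}\sup_{\|\varphi\|_{H^{2}}\le1}\|f\nabla\varphi\|_{H^{1}}\lesssim\|f\|_{W^{1,\infty}}\|h\|_{H^{-1}}$, and then $\|h\|_{H^{-1}}=\|\nabla(V\ast g)\|_{H^{-1}}\lesssim\|V\ast g\|_{L^{2}}\le\|V\|_{H^{-\nu}}\|g\|_{H^{\nu}}$ by the convolution bound with $a=-\nu$, $b=\nu$. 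None of this presents a genuine obstacle; the two points that need a little care are the bookkeeping in the first inequality of \eqref{eq:trilin-leibniz} — placing exactly $\min(l,m)$ derivatives on $\nabla V$ so that precisely $\|\nabla V\|_{W^{l,\infty}}$ and $\|g\|_{W^{k-l,1}}$ appear — and, for \eqref{eq:trilin-negative}, tracking the numerical factors coming from differentiation on the Fourier side so that the bound holds with the stated constant (one may instead integrate by parts twice against the test function, which makes this constant explicit).
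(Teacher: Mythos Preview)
Your proof is correct and follows essentially the same approach as the paper: both reduce $\|\mathcal{T}(f,V,g)\|_{H^{\mu-1}}$ to $\|f\,\nabla V\ast g\|_{H^{\mu}}$, then combine a Leibniz/algebra product estimate with the Fourier convolution bound $\|V\ast g\|_{H^{a+b}}\le\|V\|_{H^{a}}\|g\|_{H^{b}}$. The only cosmetic difference is in part~3, where the paper first passes to $\|f\nabla\psi\|_{H^{-1}}$ and dualises against $H^{1}$ (using the product-rule identity $f\nabla\psi=\nabla(f\psi)-\psi\nabla f$), whereas you dualise directly against $H^{2}$ and move $f$ onto the test function; both routes land on $\|f\|_{W^{1,\infty}}\|V\ast g\|_{L^{2}}$.
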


\begin{proof}
   The proof is standard and consists of an application of Fourier-analytical inequalities for convolutions on the torus of Sobolev functions such as 
   \begin{equation}\label{eq:convolution-estimate}
\|u \ast v\|_{H^{a+b}} \le \|u\|_{H^a} \|v\|_{H^b}, ~a,b \in \mathbb R,
   \end{equation}
   and we only sketch the main ideas. First note that for any \(\mu \in \mathbb{R}\),
\[
\left\| \mathcal{T}(f,V,g) \right\|_{H^{\mu -1}} = \left\| \nabla \cdot (f \nabla V \ast g) \right\|_{H^{\mu-1}} \leq \left\| f \nabla V \ast g \right\|_{H^{\mu}}
\] which will always be the first step of the proofs below.
\begin{enumerate}
    \item For the first part of the minimum we can bound
    $$ \left\| f \nabla V \ast g \right\|_{H^{k}} \lesssim_{k,d} \left\| f \right\|_{H^{k}} \left\| \nabla V \ast g \right\|_{W^{k,\infty}} \lesssim_{k,d} \left\| \nabla V \right\|_{W^{l,\infty}} \left\| f \right\|_{H^{k}}\left\| g \right\|_{W^{k-l,1}}  $$ as required. For the second part of the minimum we see
\[
    \left\| f \nabla V \ast g \right\|_{H^{k}} \lesssim_{k,d} \left\| f \right\|_{W^{k,\infty}} \left\| \nabla V \ast g \right\|_{H^{k}} \leq \left\| f \right\|_{W^{k,\infty}} \left\| V\ast g \right\|_{H^{k+1}}
\] 
and conclude using \eqref{eq:convolution-estimate} with \(a=\nu\) and \(b=k+1-\nu\).

    \item Since \(\gamma > d/2\), pointwise multipliers are Lipschitz on $H^\gamma$ and so
\[
\| f \nabla V \ast g \|_{H^{\gamma}} \lesssim_{\gamma,d} \| f \|_{H^{\gamma}} \| V \ast g \|_{H^{\gamma+1}}.
\] Conclude using \eqref{eq:convolution-estimate} with \(a=\gamma+1-\nu\) and \(b=\nu\). 

\item Let \(\psi \coloneqq V \ast g\) and recall that
\[
\| f \nabla \psi  \|_{H^{-1}} = \sup_{\substack{\varphi \in H^{1} \\ \| \varphi \|_{H^{1}} \leq 1}} \left\langle f \nabla \psi, \varphi \right\rangle _{L^{2}}.
\] From integration by parts, the product rule and the Cauchy-Schwarz inequality, we see that  
\[
\left| \left\langle f \nabla \psi, \varphi \right\rangle _{L^{2}} \right| \leq \left| \left\langle \psi \nabla f, \varphi \right\rangle  \right| + \left| \left\langle f \psi, \nabla \varphi \right\rangle  \right| \leq \left\| f \right\|_{W^{1,\infty}} \left\| \psi \right\|_{L^{2}} \left\| \phi \right\|_{H^{1}}.
\] Therefore, after taking the supremum over all \(\varphi \in H^{1}\) we obtain
\[
\left\| f \nabla V \ast g \right\|_{H^{-1}} \leq \left\| f \right\|_{W^{1,\infty}} \left\| V \ast g \right\|_{L^{2}}.
\] Using \eqref{eq:convolution-estimate} with \(a=\nu\) and \(b=-\nu\), the result follows.
\end{enumerate}  
\end{proof}

\begin{theorem}\label{thm:existence-linPDE}
    Let \(M>0, W \in \dot{W}^{2,\infty}\) be such that \(\| W \|_{W^{2,\infty}} \leq M\), and let  \(\rho_W\) appearing in \(\mathcal{L}_W\) in \eqref{eq:linPDE} denote the unique solution to \eqref{eq:mckv}. Let \(\phi\), the initial condition of \(\rho_W\), satisfy Assumption \ref{assumption:regulatity-mckv} for $
    \beta\geq3+d$. Then, for any \(f\in L^{2}([0,T];L^{2})\) there exists a unique strong solution \(u\) of \eqref{eq:linPDE} with
    \[
    u \in L^{2}([0,T];H^{2}) \cap H^{1}([0,T];L^{2}).
    \]
    Moreover, there exists a constant \(C = C \left( d,T,M,\| \phi \|_{H^{1}} \right) \) such that with \(u' = \frac{\mathop{}\!\mathrm{d}u}{\mathop{}\!\mathrm{d}t}\)
    \[
        \left\| u \right\|_{L^{2}([0,T];H^{2})} + \left\| u \right\|_{L^{\infty}([0,T];H^{1})} + \left\|u'\right\|_{L^{2}([0,T];L^{2})} \leq C \left\| f \right\|_{L^{2}([0,T];L^{2})}.
    \] 
\end{theorem}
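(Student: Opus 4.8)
The plan is to treat \eqref{eq:linPDE} as a standard linear parabolic Cauchy problem and invoke the Galerkin/energy machinery of \cite[Chapter 7.1]{evans2010partial}, checking that the zeroth-order coefficients appearing in $\mathcal{L}_W$ are sufficiently regular. Writing $\mathcal{L}_W u = \Delta u + \nabla\cdot(u\, b) + \nabla\cdot(\rho_W\, (\nabla W \ast u))$ with vector field $b \coloneqq \nabla W \ast \rho_W$, the operator is a divergence-form uniformly parabolic operator (principal part $\Delta$) plus lower-order terms. The first step is to record the needed bounds on the coefficients: by Assumption \ref{assumption:regulatity-mckv}, $\beta \geq 3+d$, and \cite[Theorem 5]{mckv} combined with \eqref{eq:trace-theorem-time} and Sobolev embeddings, $\rho_W \in L^\infty([0,T];H^{\beta}) \hookrightarrow L^\infty([0,T];C^2)$, so $b = \nabla W \ast \rho_W$ and $\nabla\cdot b$ are bounded in $L^\infty(\mathcal X)$, with bounds depending only on $d,T,M,\|\phi\|_{H^1}$ (the convolution only improves regularity, by \eqref{eq:convolution-estimate} or Young's inequality, and $\|\nabla W\|_{L^\infty}\le M$). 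Likewise the nonlocal term $u \mapsto \nabla W \ast u$ is bounded on every $L^2$-based space. This reduces \eqref{eq:linPDE} to a parabolic problem with $L^\infty$ coefficients.

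The second step is existence and the basic energy estimate. I would set up a Galerkin approximation in the Fourier eigenbasis of $-\Delta$ on $\mathbb{T}^d$, obtain finite-dimensional ODEs, and derive the a priori bound by testing against $u$: integrating $\langle u' - \mathcal L_W u, u\rangle = \langle f, u\rangle$ over time, the principal part gives $\frac12\frac{d}{dt}\|u\|_{L^2}^2 + \|\nabla u\|_{L^2}^2$; the lower-order terms are controlled after integration by parts, e.g. $|\langle \nabla\cdot(u b), u\rangle| = |\langle u b, \nabla u\rangle| \le \|b\|_{L^\infty}\|u\|_{L^2}\|\nabla u\|_{L^2}$, absorbed by Young's inequality into $\frac12\|\nabla u\|_{L^2}^2$ plus a constant times $\|u\|_{L^2}^2$, and similarly for the nonlocal term using $\|\nabla W \ast u\|_{L^2} \le \|\nabla W\|_{L^1}\|u\|_{L^2} \lesssim_d M \|u\|_{L^2}$. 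Grönwall then yields $\|u\|_{L^\infty_T L^2}^2 + \|u\|_{L^2_T H^1}^2 \lesssim \|f\|_{L^2_T L^2}^2$ with constant depending on $T,M,d,\|\phi\|_{H^1}$. Passing to the limit in the Galerkin scheme gives a weak solution; uniqueness follows from the same energy identity applied to the difference of two solutions with $f=0$.

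The third step is the $H^2$-and-$H^1$-in-time `improved' regularity, which is the main technical point. Here one differentiates the equation or, more robustly, tests against $-\Delta u$ (equivalently uses the Galerkin approximants and $u'_m$): from $\langle u' - \mathcal L_W u, -\Delta u\rangle = \langle f, -\Delta u\rangle$ one gets $\frac12\frac{d}{dt}\|\nabla u\|_{L^2}^2 + \|\Delta u\|_{L^2}^2 = -\langle f,\Delta u\rangle + \langle (\text{lower order}), \Delta u\rangle$; the lower-order terms are of the form $\nabla\cdot(ub)$ etc., whose $L^2$ norms are bounded by $\|u\|_{H^1}$ times the $C^1$-norm of the coefficients, and after absorbing $\|\Delta u\|_{L^2}$ via Young, Grönwall in $\|\nabla u\|_{L^2}^2$ (starting from $u(0)=0$) gives $\|u\|_{L^\infty_T H^1}^2 + \|u\|_{L^2_T H^2}^2 \lesssim \|f\|_{L^2_T L^2}^2$. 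Finally $\|u'\|_{L^2_T L^2} \le \|\mathcal L_W u\|_{L^2_T L^2} + \|f\|_{L^2_T L^2} \lesssim \|u\|_{L^2_T H^2} + \|f\|_{L^2_T L^2}$, closing the estimate. I expect the only real care to be needed in justifying that the coefficients are genuinely $C^1$ in space uniformly in time (so that $\nabla\cdot(ub)$ and the nonlocal term land in $L^2$ when $u\in H^2$), which is exactly where Assumption \ref{assumption:regulatity-mckv} with $\beta\ge 3+d$ and \cite[Theorem 5]{mckv} are used; everything else is the textbook parabolic argument, and I would cite \cite[Chapter 7.1, Theorems 3--4]{evans2010partial} for the structure and only spell out the terms specific to $\mathcal L_W$.
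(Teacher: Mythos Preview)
Your proposal is correct and follows essentially the same Galerkin/energy approach as the paper, which also cites \cite[Sec.~7.1.3]{evans2010partial} and only spells out how to handle the nonlocal term $\nabla\cdot(\rho_W\nabla W\ast u)$ via the trilinear estimate (Lemma~\ref{lem:trilin-op-T-bound} with $k=l=0$). The one cosmetic difference is that the paper obtains the improved regularity by first testing against $u'$ (yielding $\|u'\|_{L^2_TL^2}$ and $\|u\|_{L^\infty_TH^1}$ simultaneously) and then against $\Delta u$ for the $L^2_TH^2$ bound, whereas you test against $-\Delta u$ and read off $u'$ from the equation; both orderings are standard and lead to the same estimate with the same constant dependence $C(d,T,M,\|\phi\|_{H^1})$, since the convolution structure means only $\|\rho_W\|_{L^\infty_TH^1}$ is needed (not the full $C^2$ bound you mention in passing).
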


\begin{proof}
    We explain how to deal with the nonlocal term \(\nabla \cdot (\rho_W \nabla W \ast [\cdot ])\) by deriving the a priori estimates needed for the full proof, mostly adapting \cite[Sec.~7.1.3 Theorem 5]{evans2010partial} to our case. Standard Galerkin approximation arguments then yield existence and regularity in the desired function spaces, see \cite{evans2010partial} for details. We henceforth denote by \(u\) a smooth solution of \eqref{eq:linPDE}. 

    Multiply \eqref{eq:linPDE} by \(u\) and integrate by parts to obtain
    \[
    \frac{1}{2} \frac{\mathrm{d}}{\mathrm{d}t} \left\| u \right\|_{L^{2}}^{2} + \left\| \nabla u \right\|_{L^{2}}^{2} = -\left\langle u \nabla W \ast \rho_W, \nabla u \right\rangle _{L^{2}} -\left\langle \rho_W \nabla W \ast u, \nabla u \right\rangle _{L^{2}} + \left\langle f, u \right\rangle _{L^{2}}.
    \] 
    Using the Cauchy-Schwarz inequality and the proof of Lemma \ref{lem:trilin-op-T-bound} equation \eqref{eq:trilin-leibniz} with \(k=0\) and \(l=0\), we see
\[
    -\left\langle u \nabla W \ast \rho_W, \nabla u \right\rangle _{L^{2}} -\left\langle \rho_W \nabla W \ast u, \nabla u \right\rangle _{L^{2}} \lesssim_{d} \left\| \nabla W \right\|_{L^{\infty}}\left\| \rho_W \right\|_{L^{2}} \left\| u \right\|_{L^{2}}\left\| \nabla u \right\|_{L^{2}}.
\] 
    Using the same arguments as \cite[Theorem 5]{mckv}, one shows that \(\sup_{0\leq t\leq T}\left\| \rho_W(t) \right\|_{L^{2}} \leq C \left( T,M, \left\| \phi  \right\|_{L^{2}} \right) \).
    Putting the two bounds together and using Young's product inequality to absorb the first order terms, we obtain
    \[
        \frac{\mathrm{d}}{\mathrm{d}t} \left\| u \right\|_{L^{2}}^{2} + \left\| \nabla u \right\|_{L^{2}}^{2} \leq C \left(\left\| u \right\|_{L^{2}}^{2} + \left\| f \right\|_{L^{2}}^{2}  \right) 
    \] with \(C=C \left( T,M, \left\| \phi  \right\|_{L^{2}} \right)<\infty \). From there, we directly conclude, using Gronwall's lemma and integrating from \(0\) to \(T\), that
\begin{equation}\label{eq:first-estimate}
    \sup_{0\leq t\leq T} \left\| u(t) \right\|_{L^{2}}^{2} + \left\| u \right\|_{L^{2}_{T}H^{1}}^{2} \leq C \left\| f \right\|_{L^{2}_{T}L^{2}}^{2}
\end{equation} with \(C=C \left( T,M, \left\| \phi  \right\|_{L^{2}} \right)<\infty \). To proceed, multiply \eqref{eq:linPDE} by \(\partial_{t}u \eqqcolon u'\) and integrate by parts again:
\begin{equation}\label{eq:second-estimate1}
    \left\| u' \right\|_{L^{2}}^{2} + \frac{1}{2} \frac{\mathrm{d}}{\mathrm{d}t} \left\| \nabla u \right\|^{2}_{L^{2}} = \left\langle \nabla \cdot \left(\rho_W \nabla W \ast u \right), u' \right\rangle _{L^{2}} + \left\langle \nabla \cdot \left(u \nabla W \ast \rho_W \right), u' \right\rangle _{L^{2}} + \left\langle f, u' \right\rangle _{L^{2}}.
\end{equation}
Again, the Cauchy-Schwarz inequality and Lemma \ref{lem:trilin-op-T-bound} equation \eqref{eq:trilin-leibniz} with \(k=1\) and \(l=0\) give us
\[
    \left\langle \nabla \cdot \left(\rho_W \nabla W \ast u \right), u' \right\rangle _{L^{2}} + \left\langle \nabla \cdot \left(u \nabla W \ast \rho_W \right), u' \right\rangle _{L^{2}} \lesssim_{d} \left\| \nabla W \right\|_{L^{\infty}}\left\| \rho_W \right\|_{H^{1}}\left\| u \right\|_{H^{1}} \left\| u' \right\|_{L^{2}}.
\]
By \cite[Theorem 5]{mckv}, we have that \(\sup_{0 \leq t \leq T}\left\| \rho_W(t) \right\|_{H^{1}} \leq C \left( d,T,M, \| \phi \|_{H^{1}} \right) \)
which, after packing the \(u'\) terms on the left-hand side using Young's product inequality, gives us
\[
    \left\| u' \right\|_{L^{2}}^{2} + \frac{1}{2} \frac{\mathrm{d}}{\mathrm{d}t} \left\| \nabla u \right\|^{2}_{L^{2}} \lesssim \left\| u \right\|_{H^{1}}^{2} + \left\| f \right\|_{L^{2}}^{2}.
\]
Integrating in time and using the first estimate \eqref{eq:first-estimate} as well as \(u(0)=0\) and the (time and space) regularity of Galerkin approximations, we obtain our second estimate
\begin{equation}\label{eq:second-estimate2}
    \left\| u' \right\|_{L^{2}_{T}L^{2}}^{2} + \sup_{0\leq t\leq T} \left\| u(t) \right\|_{H^{1}}^{2} \leq C \left\| f \right\|_{L^{2}_{T}L^{2}}^{2} 
\end{equation} with \(C=C \left( d,T,M, \| \phi \|_{H^{1}} \right)<\infty \). 

Regularity in \(L^{2}_{T}H^{2}\) now follows directly from the previous estimates. Multiply \eqref{eq:linPDE} by \(\Delta u\) and integrate
\[
\left\| \Delta u \right\|_{L^{2}}^{2} = \left\langle u',\Delta u \right\rangle_{L^2}  - \left\langle  \nabla \cdot (u \nabla W \ast \rho_W), \Delta u \right\rangle_{L^2} - \left\langle \nabla \cdot \left( \rho_W \nabla W \ast u \right), \Delta u \right\rangle_{L^2} - \left\langle f, \Delta u \right\rangle_{L^2}.
\]
Using Lemma \ref{lem:trilin-op-T-bound} equation \eqref{eq:trilin-leibniz} as before, as well as the Cauchy-Schwarz inequality and Young's product inequalities, we see 
\[
\left\| \Delta u \right\|_{L^{2}}^{2} \lesssim \left\| u' \right\|_{L^{2}}^{2} + \left\| u \right\|_{H^{1}}^{2} + \left\| f \right\|_{L^{2}}^{2}.
\]
Recall now the bounds on \(u\) and \(u'\) from \eqref{eq:first-estimate} and \eqref{eq:second-estimate2} and simply integrate in time to find that
\[
\left\| \Delta u \right\|_{L^{2}_{T}L^{2}}^{2} \leq C \left\| f \right\|_{L^{2}_{T}L^{2}}^{2}
\]
with \(C = C \left(d,T,M,\| \phi \|_{H^{d+1}} \right) \). Since we know that for \(u\) smooth enough, \(\left\| \Delta u \right\|_{L^{2}}= \left\| D^{2}u \right\|_{L^{2}}\) (using e.g. Plancherel's identity), we can combine the estimate above with \eqref{eq:first-estimate} to obtain regularity in \(L^{2}_{T}H^{2}\), which finishes the proof.
\end{proof}

\begin{theorem}[Higher space regularity]\label{thm:linPDE-higher-regularity}
    Suppose that the hypotheses of Theorem \ref{thm:existence-linPDE} hold, and consider the strong solution \(u\) of \eqref{eq:linPDE} given by that theorem. Then, if in addition \(f \in L^{2}([0,T];H^{k})\) for some \( 0 \leq k \leq \beta-1\),
    \[
    u\in L^{2}([0,T];H^{k+2}) \cap H^{1}([0,T];H^{k}).
    \]
    Moreover, there exists a constant \(C = C(d, k, T, M, \left\| \phi \right\|_{H^{k+1}})\) such that
    \[
        \left\| u \right\|_{L^{2}([0,T];H^{k+2})} + \left\| u \right\|_{L^{\infty}([0,T];H^{k+1})} + \left\| u' \right\|_{L^{2}([0,T];H^{k})} \leq C \left\| f \right\|_{L^{2}([0,T];H^{k})}
    \] where \(u' = \frac{\mathop{}\!\mathrm{d}u}{\mathop{}\!\mathrm{d}t}\). 
\end{theorem}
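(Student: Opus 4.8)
The plan is to prove the displayed estimate as an \emph{a priori} bound on smooth approximations (e.g.\ Galerkin), after which existence and regularity in the claimed spaces follow exactly as in the proof of Theorem~\ref{thm:existence-linPDE} and in \cite[Sec.~7.1.3 Theorem 5]{evans2010partial}; I will therefore only carry out the energy estimates, treating $u$ as smooth. Since the Fourier multiplier $(1-\Delta)^{k+1}$ commutes with both $\partial_t$ and $\Delta$, and since the loss of one derivative in $\mathcal L_W$ relative to $\Delta$ can be placed on $\rho_W$, the natural step is to pair the equation with $(1-\Delta)^{k+1}u$. Writing $\mathcal L_W u=\Delta u+\mathcal T(u,W,\rho_W)+\mathcal T(\rho_W,W,u)$ with $\mathcal T$ as in \eqref{eq:def-Tcal}, Parseval's identity gives, for each $t\in[0,T]$,
\[
\tfrac12\frac{\mathrm{d}}{\mathrm{d}t}\|u\|_{H^{k+1}}^{2}+\|u\|_{H^{k+2}}^{2}
=\|u\|_{H^{k+1}}^{2}+\big\langle\mathcal T(u,W,\rho_W)+\mathcal T(\rho_W,W,u),\,(1-\Delta)^{k+1}u\big\rangle+\big\langle f,(1-\Delta)^{k+1}u\big\rangle ,
\]
using $\langle\Delta u,(1-\Delta)^{k+1}u\rangle=-\|u\|_{H^{k+2}}^{2}+\|u\|_{H^{k+1}}^{2}$.

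To estimate the last two terms I would use the elementary duality bound $|\langle g,(1-\Delta)^{k+1}u\rangle|\le\|g\|_{H^{k}}\|u\|_{H^{k+2}}$ together with Lemma~\ref{lem:trilin-op-T-bound}, equation~\eqref{eq:trilin-leibniz} applied with the integer there equal to $k+1$ and $l=0$ (and boundedness of $\mathbb T^d$ to replace $W^{k+1,1}$ by $H^{k+1}$), which yields
\[
\|\mathcal T(u,W,\rho_W)\|_{H^{k}}+\|\mathcal T(\rho_W,W,u)\|_{H^{k}}\ \lesssim_{k,d}\ \|\nabla W\|_{L^{\infty}}\,\|u\|_{H^{k+1}}\,\|\rho_W\|_{H^{k+1}}\ \le\ M\,\|u\|_{H^{k+1}}\sup_{0\le s\le T}\|\rho_W(s)\|_{H^{k+1}} .
\]
This is the step where the hypothesis $k\le\beta-1$ is essential: since $\rho_W\in L^{2}([0,T];H^{\beta+1})\cap H^{1}([0,T];H^{\beta-1})\hookrightarrow C([0,T];H^{\beta})$ by \eqref{eq:trace-theorem-time}, the quantity $\sup_{s}\|\rho_W(s)\|_{H^{k+1}}$ is finite, and by \cite[Theorem 5]{mckv} it is bounded by a constant depending only on $d,T,M$ and $\|\phi\|_{H^{k+1}}$, precisely because $k+1\le\beta$. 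Inserting these bounds and Cauchy--Schwarz for $\langle f,(1-\Delta)^{k+1}u\rangle$ into the identity above, and using Young's inequality to absorb the resulting $\tfrac12\|u\|_{H^{k+2}}^{2}$, I obtain a differential inequality
\[
\frac{\mathrm{d}}{\mathrm{d}t}\|u\|_{H^{k+1}}^{2}+\|u\|_{H^{k+2}}^{2}\ \le\ C\big(\|u\|_{H^{k+1}}^{2}+\|f\|_{H^{k}}^{2}\big),\qquad C=C(d,k,T,M,\|\phi\|_{H^{k+1}}) .
\]
Since $u(0)=0$, Gr\"onwall's lemma gives $\sup_{t}\|u(t)\|_{H^{k+1}}^{2}\le C\|f\|_{L^{2}_{T}H^{k}}^{2}$, and integrating the inequality in time then yields $\|u\|_{L^{2}_{T}H^{k+2}}^{2}\le C\|f\|_{L^{2}_{T}H^{k}}^{2}$; note this self-closing argument dispenses with any induction on $k$ (one could instead interpolate $\|u\|_{H^{k+1}}$ between $\|u\|_{H^{k+2}}$ and $\|u\|_{L^{2}}$ and invoke Theorem~\ref{thm:existence-linPDE}).

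Finally, the bound on $u'=\partial_t u$ in $L^{2}([0,T];H^{k})$ is read off directly from $u'=\Delta u+\mathcal T(u,W,\rho_W)+\mathcal T(\rho_W,W,u)+f$: the first term has $L^{2}_{T}H^{k}$-norm at most $\|u\|_{L^{2}_{T}H^{k+2}}$, the two trilinear terms are controlled in $L^{\infty}_{T}H^{k}\subset L^{2}_{T}H^{k}$ by the estimate already used (now with $\|u\|_{L^{2}_{T}H^{k+1}}\le\|u\|_{L^{2}_{T}H^{k+2}}$), and $\|f\|_{L^{2}_{T}H^{k}}$ is given; hence $\|u'\|_{L^{2}_{T}H^{k}}\le C\|f\|_{L^{2}_{T}H^{k}}$, while $u\in C([0,T];H^{k+1})$ with the stated supremum bound follows from \eqref{eq:trace-theorem-time}. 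The only non-routine point is the bookkeeping of the nonlocal terms and the observation that the first-order part of $\mathcal L_W$ is exactly absorbable because $\rho_W$ carries $\beta\ge k+1$ spatial derivatives; everything else is the standard parabolic energy method of \cite[Chapter 7]{evans2010partial}.
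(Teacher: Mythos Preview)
Your proof is correct and follows essentially the same approach as the paper: the paper's proof simply says to repeat the energy estimates of Theorem~\ref{thm:existence-linPDE} with the $H^{k}$ inner product in place of $L^{2}$, noting that this works because $\sup_{t}\|\rho_W(t)\|_{H^{k+1}}$ is controlled via \cite[Theorem 5]{mckv} precisely when $k\le\beta-1$. Your version streamlines the three separate testings of Theorem~\ref{thm:existence-linPDE} (against $u$, against $u'$, against $\Delta u$) into a single pairing with $(1-\Delta)^{k+1}u$ and then reads off the $u'$ bound directly from the equation, but the key ingredients---Lemma~\ref{lem:trilin-op-T-bound} at order $k+1$ and the $H^{k+1}$ control of $\rho_W$---are identical.
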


\begin{proof}
    Simply repeat the proof of Theorem \ref{thm:existence-linPDE} using the \(H^{k}\) inner-product instead of the \(L^{2}\) inner-product. Repeating the proof is possible since for \(k \leq \beta - 1\), \cite[Theorem 5]{mckv} guarantees that 
\[
    \sup_{0\leq t \leq T} \left\| \rho_W \right\|_{H^{k+1}} \leq C(d, k, T, M, \left\| \phi \right\|_{H^{k+1}}).
\]     

\end{proof}

\begin{theorem}[Estimates in weaker norms]\label{thm:linPDE-weak-norms} 
    Suppose that the hypotheses of Theorem \ref{thm:existence-linPDE} hold, and consider the strong solution \(u\) of \eqref{eq:linPDE} given by that theorem. Then for any \(1 \leq s < \beta + 1 - \frac{d}{2}\),
    \[
        \left\| u \right\|_{L^{2}([0,T];H^{-s+2})} + \left\| u \right\|_{L^{\infty}([0,T];H^{-s+1})} + \left\|u'\right\|_{L^{2}([0,T];H^{-s})} \leq C \left\| f \right\|_{L^{2}([0,T];H^{-s})}.
    \] 
    with \(C=C(s,d,T,M,\left\| \phi \right\|_{H^{k}})\), \(k=\min(\beta,\lfloor  s-1+d/2 \rfloor + 1)\). 
\end{theorem}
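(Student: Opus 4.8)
The plan is to run the same three energy estimates as in the proof of Theorem~\ref{thm:existence-linPDE}, but now pairing the equation \eqref{eq:linPDE} in the $H^{-s}$ inner product $\langle a,b\rangle_{-s}:=\sum_{k\in\mathbb Z^{d}}(1+|k|^{2})^{-s}\hat a_{k}\overline{\hat b_{k}}$ instead of the $L^{2}$ one; equivalently, one may apply the Bessel multiplier $\Lambda^{-s}=(1-\Delta)^{-s/2}$ to \eqref{eq:linPDE} and work with the heat equation satisfied by $\Lambda^{-s}u$, using that $\Lambda^{-s}$ commutes with $\partial_{t}$, $\Delta$ and $\nabla\cdot$. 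Since the solution $u$ supplied by Theorem~\ref{thm:existence-linPDE} already lies in $L^{2}([0,T];H^{2})\cap H^{1}([0,T];L^{2})$, all quantities below are finite and the integrations by parts in time are legitimate (after a routine mollification in time if one wishes to be careful). Throughout we set $k:=\min\big(\beta,\lfloor s-1+d/2\rfloor+1\big)$; the hypothesis $s<\beta+1-\tfrac{d}{2}$ makes $k$ an integer with $\tfrac{d}{2}<k\le\beta$, so that \cite[Theorem 5]{mckv} together with \eqref{eq:trace-theorem-time} provides $\rho_{W}\in L^{\infty}([0,T];H^{k})$ with $\sup_{t}\|\rho_{W}(t)\|_{H^{k}}\le C(d,k,T,M,\|\phi\|_{H^{k}})$.

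The only genuinely new analytic input --- which replaces the applications of Lemma~\ref{lem:trilin-op-T-bound} in the proof of Theorem~\ref{thm:existence-linPDE} --- is the pair of negative-order bounds
\[
\big\|\mathcal{T}(u,W,\rho_{W})\big\|_{H^{-\sigma-1}}+\big\|\mathcal{T}(\rho_{W},W,u)\big\|_{H^{-\sigma-1}}\;\lesssim\;C\,\|u\|_{H^{-\sigma}},\qquad\sigma\in\{s-1,\,s\},
\]
with $C=C\big(d,s,\|W\|_{W^{2,\infty}},\sup_{t}\|\rho_{W}\|_{H^{k}}\big)$. These reduce, via $\|\nabla\cdot v\|_{H^{\mu-1}}\le\|v\|_{H^{\mu}}$, to estimating products. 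Using \eqref{eq:convolution-estimate} one has $\nabla W\ast\rho_{W}=\nabla(W\ast\rho_{W})\in H^{k+1}$ with $\|\nabla W\ast\rho_{W}\|_{H^{k+1}}\lesssim_{d}\|W\|_{W^{2,\infty}}\|\rho_{W}\|_{H^{k}}$, as well as $\|\nabla W\ast u\|_{H^{\mu+1}}\lesssim_{d}\|W\|_{W^{2,\infty}}\|u\|_{H^{\mu}}$ and $\|\nabla W\ast u\|_{H^{\mu}}\lesssim_{d}\|W\|_{W^{1,\infty}}\|u\|_{H^{\mu}}$ for every $\mu\in\mathbb R$ (the latter because $|\widehat{\nabla W}_{k}|\le\|\nabla W\|_{L^{1}}\lesssim_{d}\|W\|_{W^{1,\infty}}$). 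Combining these with the standard Sobolev/H\"older product estimates --- multiplication by the $H^{k+1}$-function $\nabla W\ast\rho_{W}$ is bounded on $H^{-s}$ and $H^{-s+1}$, and multiplication by $\rho_{W}\in H^{k}$ is bounded $H^{-s+1}\to H^{-s}$, both because $k>\tfrac{d}{2}$ --- gives the displayed bounds (for the second summand one first moves a derivative onto $\nabla W\ast u$). It is precisely here that $s<\beta+1-\tfrac{d}{2}$ is used: it forces the required Sobolev order $k$ of $\rho_{W}$ to satisfy $k\le\beta$ (so the bound on $\rho_{W}$ is available from \cite[Theorem 5]{mckv}) and $k>\tfrac{d}{2}$ (so the product estimates close), while the one-derivative smoothing of convolution with $\nabla W$ is what allows $\rho_{W}$ to sit in the ``first slot''.

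Granting these bounds, the three estimates proceed exactly as in Theorem~\ref{thm:existence-linPDE}. Testing \eqref{eq:linPDE} against $u$ in $\langle\cdot,\cdot\rangle_{-s}$, using $-\langle\Delta u,u\rangle_{-s}=\|\nabla u\|_{H^{-s}}^{2}\gtrsim\|u\|_{H^{-s+1}}^{2}-\|u\|_{H^{-s}}^{2}$, the Cauchy--Schwarz bound $|\langle a,b\rangle_{-s}|\le\|a\|_{H^{-s-1}}\|b\|_{H^{-s+1}}$ together with the $\sigma=s$ case to control the trilinear terms by $C\|u\|_{H^{-s}}\|u\|_{H^{-s+1}}$, Young's inequality to absorb a fraction of $\|u\|_{H^{-s+1}}^{2}$, and Gronwall, yields $\sup_{t}\|u(t)\|_{H^{-s}}^{2}+\|u\|_{L^{2}_{T}H^{-s+1}}^{2}\lesssim\|f\|_{L^{2}_{T}H^{-s}}^{2}$. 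Testing against $u'=\partial_{t}u$, using $|\langle a,b\rangle_{-s}|\le\|a\|_{H^{-s}}\|b\|_{H^{-s}}$ and the $\sigma=s-1$ case to bound the trilinear terms by $C\|u\|_{H^{-s+1}}\|u'\|_{H^{-s}}$, absorbing the $\|u'\|_{H^{-s}}^{2}$ contributions, and integrating in time with $u(0)=0$ and the first estimate, yields $\|u'\|_{L^{2}_{T}H^{-s}}^{2}+\sup_{t}\|u(t)\|_{H^{-s+1}}^{2}\lesssim\|f\|_{L^{2}_{T}H^{-s}}^{2}$. Finally, reading $-\Delta u=f-u'+\mathcal{T}(u,W,\rho_{W})+\mathcal{T}(\rho_{W},W,u)$ off \eqref{eq:linPDE} and taking $H^{-s}$ norms gives $\|\Delta u\|_{L^{2}_{T}H^{-s}}\lesssim\|f\|_{L^{2}_{T}H^{-s}}$ by the first two estimates and the $\sigma=s-1$ case, and combining with the Plancherel bound $\|v\|_{H^{-s+2}}\lesssim\|\Delta v\|_{H^{-s}}+\|v\|_{H^{-s}}$ controls $\|u\|_{L^{2}_{T}H^{-s+2}}$. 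Adding the three bounds gives the claim. The main obstacle is the second paragraph: identifying and proving the right negative-order product and convolution estimates, and pinning down the sharp Sobolev order of $\rho_{W}$ they demand; everything else is a line-by-line transcription of the proof of Theorem~\ref{thm:existence-linPDE}.
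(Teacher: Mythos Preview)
Your proof is correct and follows essentially the same strategy as the paper: repeat the three energy estimates of Theorem~\ref{thm:existence-linPDE} with the $H^{-s}$ inner product in place of $L^{2}$, the only new ingredient being negative-order bounds on the two trilinear terms. The execution of this ingredient differs slightly in presentation. The paper proves $\|\rho_W\nabla W\ast u\|_{H^{-s}}+\|u\nabla W\ast\rho_W\|_{H^{-s}}\lesssim\|u\|_{H^{-s}}$ by duality: it pairs with a test function $v\in H^{s}$, and for the nonlocal term uses Fubini to write $\langle\rho_W\nabla W\ast u,v\rangle=\langle u,\nabla\check W\ast(\rho_W v)\rangle$, landing on $\|\rho_W\|_{C^{s-1}}$, which is where the Sobolev embedding and the restriction $s<\beta+1-d/2$ enter. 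You instead invoke the Sobolev multiplication theorem directly (multiplication by $\nabla W\ast\rho_W\in H^{k+1}$ and by $\rho_W\in H^{k}$ is bounded on negative Sobolev spaces of order $\le k$), combined with the one-derivative gain of convolution with $\nabla W\in W^{1,\infty}$; this leads to the same requirement $k>d/2$, $k\le\beta$. The two arguments are equivalent---your product estimates are precisely what the paper's duality computation proves---and yield the same constant dependence.
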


\begin{proof}
    Repeat the estimates in the proof of Theorem \ref{thm:existence-linPDE}, this time taking inner products in \(H^{-s}\). We sketch the main steps. By the Cauchy-Schwarz inequality, 
    \[
        -\left\langle \rho_W \nabla W \ast u, \nabla u \right\rangle _{H^{-s}} -\left\langle u \nabla W \ast \rho_W , \nabla u \right\rangle _{H^{-s}} \leq \left\| \nabla u \right\|_{H^{-s}} \left( \| \rho_W \nabla W \ast u \|_{H^{-s}} + \| u \nabla W \ast \rho_W \|_{H^{-s}} \right).
    \]
    To bound the terms inside the parentheses, we use the duality \(\| u \|_{H^{-s}} = \sup_{\| v \|_{H^{s}}\leq 1} \left\langle u,v \right\rangle _{L^{2}}\). Pick any \(v \in H^{s}\) and note, using the proof of Lemma \ref{lem:trilin-op-T-bound} equation \eqref{eq:trilin-leibniz} with \(k=s\) and \(l=1\), 
    \[
    \left\langle u \nabla W \ast \rho_W ,v \right\rangle _{L^{2}} = \left\langle u,v\nabla W \ast \rho_W  \right\rangle _{L^{2}} \leq \left\| u \right\|_{H^{-s}} \left\| v\nabla W \ast \rho_W \right\|_{H^{s}} \leq \left\| u \right\|_{H^{-s}} \left\| W \right\|_{W^{2,\infty}} \left\| \rho_W \right\|_{H^{s-1}} \left\| v \right\|_{H^{s}}.
    \]
    Similarly, letting \(\check{W}(x)\coloneqq W(-x)\) and using Fubini,
    \[
        \left\langle \rho_W \nabla W \ast u,v \right\rangle _{L^{2}} = \left\langle u,\nabla \check{W} \ast \left( \rho_W v \right)   \right\rangle _{L^{2}} \leq \left\| u \right\|_{H^{-s}} \left\| \nabla \check{W} \ast \left( \rho_W v \right)\right\|_{H^{s}} \leq \left\| u \right\|_{H^{-s}} \left\| W \right\|_{W^{2,\infty}} \left\| \rho_W \right\|_{C^{s-1}} \left\| v \right\|_{H^{s}}.
    \]
    Since, by assumption, \(s-1+ d/2 < \beta\), the Sobolev embeddings together with \cite[Theorem 5]{mckv} imply that 
    \[
    \sup_{0\leq t \leq T} \| \rho_W \|_{C^{s-1}} \leq C(s,d,T,M,\| \phi \|_{H^{k}}) < \infty,
    \] 
    with \(k=\min(\beta,\lfloor  s-1+d/2 \rfloor + 1)\). Therefore, taking the supremum over all \(v \in H^{s}\) with \(\| v \|_{H^{s}} \leq 1\), we obtain
    \[
    \| \rho_W \nabla W \ast u \|_{H^{-s}} + \| u \nabla W \ast \rho_W \|_{H^{-s}} \leq C \| u \|_{H^{-s}},
    \]with \(C=C(d,s,T,M,\| \phi \|_{H^{\beta}})\). Repeating these estimates for the other steps in the proof of Theorem \ref{thm:existence-linPDE} then yields the desired result.
\end{proof}

\begin{theorem}[Improved time regularity]\label{thm:linPDE-time-regularity} 
    Suppose that the hypotheses of Theorem \ref{thm:existence-linPDE} hold, and consider the strong solution \(u\) of \eqref{eq:linPDE} given by that theorem. If moreover \(f'\in L^{2}([0,T];L^{2})\) then,
\[
   u' \in L^{2}([0,T];H^{1}) \cap H^{1}([0,T];H^{-1}).
\]     
Moreover, there exists a constant \(C = C(d, T, M, \left\| \phi \right\|_{H^{1}})\) such that
\[
    \left\| \frac{\mathrm{d}u}{\mathrm{d}t} \right\|_{L^{2}([0,T];H^{1})} + \left\| \frac{\mathrm{d}u}{\mathrm{d}t} \right\|_{L^{\infty}([0,T];L^{2})} + \left\| \frac{\mathrm{d}^{2}u}{\mathrm{d}t^{2}} \right\|_{L^{2}([0,T];H^{-1})} \leq C \left\| f \right\|_{H^{1}([0,T];L^{2})}
\]
\end{theorem}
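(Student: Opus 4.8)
The plan is to differentiate equation \eqref{eq:linPDE} in time and apply the energy method to the resulting problem for $v \coloneqq u'$, in the spirit of \cite[Sec.~7.1.3, Theorem 6]{evans2010partial} but now only needing $H^1$/$L^2$/$H^{-1}$ bounds. Since the coefficients of $\mathcal{L}_W$ depend on time through $\rho_W = \rho_W(t,\cdot)$, differentiating $(\partial_t - \mathcal{L}_W)u = f$ formally gives
\begin{equation*}
(\partial_t - \mathcal{L}_W)v = g, \qquad g \coloneqq f' + \nabla\cdot\big(u\,\nabla W \ast \rho_W'\big) + \nabla\cdot\big(\rho_W'\,\nabla W \ast u\big),
\end{equation*}
with initial datum $v(0) = f(0) + \mathcal{L}_W(0)u(0) = f(0)$, the second equality because $u(0)=0$. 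Here $f(0)$ makes sense as an element of $L^2$ with $\|f(0)\|_{L^2} \lesssim_T \|f\|_{H^1_TL^2}$ since $H^1([0,T];L^2) \hookrightarrow C([0,T];L^2)$, and $\rho_W'$ is controlled in low-order Sobolev norms via the equation \eqref{eq:mckv} for $\rho_W$ together with \cite[Theorem 5]{mckv}. As usual this computation is made rigorous by differentiating the Galerkin ODEs for $u$ in time (legitimate because their right-hand side inherits an $H^1$-in-time bound from $f$ and from $\rho_W\in H^1_TH^{\beta-1}$), establishing the a priori estimates below at the finite-dimensional level, and passing to the limit.

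First I would bound the new forcing $g$ in $L^2([0,T];H^{-1})$. The term $f'$ lies in $L^2_TL^2 \hookrightarrow L^2_TH^{-1}$ by hypothesis. For the two nonlocal terms, Lemma \ref{lem:trilin-op-T-bound} equation \eqref{eq:trilin-leibniz} with $k=l=0$ gives, for each fixed $t$, $\|\nabla\cdot(u\,\nabla W \ast \rho_W')\|_{H^{-1}} \lesssim_d \|\nabla W\|_{L^{\infty}}\,\|u\|_{L^2}\,\|\rho_W'\|_{L^1}$ and the symmetric bound with $u$ and $\rho_W'$ interchanged. Integrating in time, using $\|u\|_{L^{\infty}_TL^2} \lesssim \|f\|_{L^2_TL^2}$ from Theorem \ref{thm:existence-linPDE}, the embedding $L^2 \hookrightarrow L^1$ on $\mathbb{T}^d$, and the bound on $\|\rho_W'\|_{L^2_TL^2}$ from \cite[Theorem 5]{mckv}, one obtains $\|g\|_{L^2_TH^{-1}} \lesssim \|f\|_{H^1_TL^2}$ with a constant of the claimed form.

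Next I would run the basic energy estimate: pairing $(\partial_t - \mathcal{L}_W)v = g$ with $v$ gives
\begin{equation*}
\tfrac12\tfrac{\mathrm{d}}{\mathrm{d}t}\|v\|_{L^2}^2 + \|\nabla v\|_{L^2}^2 = -\langle v\,\nabla W \ast \rho_W, \nabla v\rangle_{L^2} - \langle \rho_W\,\nabla W \ast v, \nabla v\rangle_{L^2} + \langle g, v\rangle,
\end{equation*}
where the last bracket is the $H^{-1}$–$H^1$ pairing. Exactly as in the proof of Theorem \ref{thm:existence-linPDE} the first-order terms are bounded by $\lesssim_d \|\nabla W\|_{L^{\infty}}\sup_t\|\rho_W(t)\|_{L^2}\|v\|_{L^2}\|\nabla v\|_{L^2}$ and, together with $|\langle g,v\rangle| \le \|g\|_{H^{-1}}\|v\|_{H^1}$, absorbed into $\|\nabla v\|_{L^2}^2$ by Young's inequality, leaving lower-order contributions $\lesssim \|v\|_{L^2}^2 + \|g\|_{H^{-1}}^2$. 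Gronwall's lemma, integration over $[0,T]$, and $\|v(0)\|_{L^2} = \|f(0)\|_{L^2} \lesssim_T \|f\|_{H^1_TL^2}$ then give
\begin{equation*}
\sup_{0\le t\le T}\|v(t)\|_{L^2}^2 + \|v\|_{L^2_TH^1}^2 \lesssim \|v(0)\|_{L^2}^2 + \|g\|_{L^2_TH^{-1}}^2 \lesssim \|f\|_{H^1_TL^2}^2,
\end{equation*}
which controls the first two terms on the left of the asserted estimate.

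Finally, the bound on $\|v'\|_{L^2_TH^{-1}}$ (i.e.\ on $\mathrm{d}^2u/\mathrm{d}t^2$) follows directly from the equation: $v' = \Delta v + \nabla\cdot(v\,\nabla W \ast \rho_W) + \nabla\cdot(\rho_W\,\nabla W \ast v) + g$, where $\Delta : H^1 \to H^{-1}$ is bounded and the two lower-order terms are bounded in $H^{-1}$ by $\lesssim_d \|\nabla W\|_{L^{\infty}}\sup_t\|\rho_W(t)\|_{L^2}\|v\|_{L^2}$ (again by \eqref{eq:trilin-leibniz} with $k=l=0$); hence $\|v'\|_{L^2_TH^{-1}} \lesssim \|v\|_{L^2_TH^1} + \|g\|_{L^2_TH^{-1}} \lesssim \|f\|_{H^1_TL^2}$. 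That $v'$ is genuinely the distributional time derivative of $v = u'$, so that $u' \in H^1([0,T];H^{-1})$, is the standard duality argument (cf.\ \cite{lions}). I do not expect any single estimate to be the real obstacle; the main point is the bookkeeping — noticing that the commutator forcing $g$ only lives in $H^{-1}$, so one must use the energy/duality form of parabolic regularity rather than the strong form of Theorem \ref{thm:existence-linPDE}, and that the time differentiation is justified through the $H^1$-in-time control of $\rho_W$ supplied by \cite[Theorem 5]{mckv}.
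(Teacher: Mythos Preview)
Your proposal is correct and follows essentially the same approach as the paper: differentiate the equation in time, run the basic energy estimate on $v=u'$ using Lemma~\ref{lem:trilin-op-T-bound} with $k=l=0$ and \cite[Theorem 5]{mckv} for the $\rho_W'$ terms, identify $v(0)=f(0)$ via $H^1_T L^2 \hookrightarrow C_T L^2$, and then read off $u''$ in $H^{-1}$ by duality. The only cosmetic difference is that you package the commutator terms into a single forcing $g\in L^2_T H^{-1}$ and use the $H^{-1}$--$H^1$ pairing, whereas the paper keeps the four cross terms explicit throughout; the estimates are the same.
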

\begin{proof}
    We essentially repeat steps 4 to 7 of the proof of \cite[Sec. 7.1.3 Theorem 5]{evans2010partial}, using Lemma \ref{lem:trilin-op-T-bound} equation \eqref{eq:trilin-leibniz} to bound the nonlocal terms, as in Theorem \ref{thm:existence-linPDE}.
    
    Throughout the proof we denote by \(f'\) the time derivative of \(f\).
    We give the a priori estimate and refer to the proof of \cite[Sec. 7.1.3 Theorem 5]{evans2010partial} for the full Galerkin argument. First, we differentiate the equation in time and take the \(L^{2}\) inner-product with some \(u'\) to obtain, upon integration by parts,
\begin{align*}
    \frac{1}{2} \frac{\mathrm{d}}{\mathrm{d}t} \left\| u' \right\|_{L^{2}} + \left\| \nabla u' \right\|^{2}_{L^{2}} = &\left\langle f',u' \right\rangle\\ 
     &- \left\langle \rho_W' \nabla W \ast u, \nabla u' \right\rangle - \left\langle \rho_W \nabla W \ast u', \nabla u' \right\rangle \\
     &- \left\langle u' \nabla W \ast \rho_W, \nabla u' \right\rangle - \left\langle u \nabla W \ast \rho_W', \nabla u' \right\rangle.
\end{align*}
From \cite[Theorem 5]{mckv}, we know that \(\sup_{0 \leq t \leq T} \left\| \rho_W'(t) \right\|_{L^{2}} \leq C \left( d,T,M, \left\| \phi \right\|_{H^{1}} \right)  \) and  \(\sup_{0\leq t\leq T}\left\| \rho_W(t) \right\|_{L^{2}} \leq C \left( M, \left\| \phi  \right\|_{L^{2}},T \right) \). Combining these estimates with Lemma \ref{lem:trilin-op-T-bound} equation \eqref{eq:trilin-leibniz} with \(k=0\) and \(l=0\) gives
\[
    \frac{1}{2} \frac{\mathrm{d}}{\mathrm{d}t} \left\| u' \right\|_{L^{2}} + \left\| \nabla u' \right\|^{2}_{L^{2}} \lesssim \left\| f' \right\|_{L^{2}}\left\| u' \right\|_{L^{2}}+ \left\| u \right\|_{L^{2}} \left\| \nabla u' \right\|_{L^{2}} +  \left\| u' \right\|_{L^{2}} \left\| \nabla u' \right\|_{L^{2}}. 
\]
Recall from Theorem \ref{thm:existence-linPDE} that \(\sup_{0 \leq t \leq T}\left\| u \right\|_{L^{2}} \leq C \left( M, \left\| \phi  \right\|_{H^{1}}, d, T \right) \left\| f \right\|_{L^{2}_{T}L^{2}}\) and use Young's product inequality to obtain, after integrating in time,
\[
\sup_{0 \leq t \leq T} \left\| u' \right\|_{L^{2}}^{2} + \int_{0}^{T} \left\| u'(t) \right\|_{H^{1}}^{2} \mathop{}\!\mathrm{d}t \lesssim  \left\| f \right\|_{H^{1}_{T}L^{2}}^{2} + \left\| u'(0) \right\|_{L^{2}}^{2}.
\]
Now, using the PDE \eqref{eq:linPDE} we see that \(u'(0)=f(0)\), the latter being well-defined by virtue of the Sobolev embeddings (see e.g. \cite[Sec. 5.9.2. Theorem 2]{evans2010partial}). This implies that \(f\) is time continuous as an \(L^{2}\)-valued function, and \(\sup_{0 \leq t \leq T} \left\| f(t) \right\|_{L^{2}} \lesssim_{T}\left\| f \right\|_{H^{1}_{T}L^{2}}\). We finally obtain
\[
\sup_{0 \leq t \leq T} \left\| u'(t) \right\|_{L^{2}}^{2} + \int_{0}^{T} \left\| u'(t) \right\|_{H^{1}}^{2} \mathop{}\!\mathrm{d}t \leq C \left\| f \right\|_{H^{1}_{T}L^{2}}^{2}
\] with \(C = C \left( d,T,M, \left\| \phi \right\|_{H^{1}} \right)\). 

Now, we come back to the PDE, take time derivatives and \(L^{2}\) inner-products with some \(v \in H^{1}\):  
\begin{align*}
    \left\langle u'',v \right\rangle = \left\langle f',v \right\rangle &- \left\langle \nabla u', \nabla v \right\rangle \\
    & - \left\langle \rho_W' \nabla W \ast u, \nabla v \right\rangle - \left\langle \rho_W \nabla W \ast u', \nabla v \right\rangle - \left\langle u' \nabla W \ast \rho_W, \nabla v \right\rangle - \left\langle u \nabla W \ast \rho_W', \nabla v \right\rangle.
\end{align*}
Using once again the estimates from \cite[Theorem 5]{mckv} as well as Lemma \ref{lem:trilin-op-T-bound} equation \eqref{eq:trilin-leibniz} and the Cauchy-Schwarz inequality, we see that
\[
    \left\langle u'', \frac{v}{\left\| v \right\|_{H^{1}}} \right\rangle \lesssim \left\| u' \right\|_{H^{1}}  + \left\| u \right\|_{L^{2}} + \left\| f' \right\|_{L^{2}}.
\]
Therefore, by the definition of \(H^{-1}\) as a dual space,
\[
    \left\| u'' \right\|_{H^{-1}} \lesssim \left\| u' \right\|_{H^{1}}  + \left\| u \right\|_{L^{2}} + \left\| f' \right\|_{L^{2}}.
\]
Integrating in time and using the previous estimates yields the desired result.
\end{proof}

\section{Calculus in Banach spaces}

In this section, we collect some results from calculus in Banach spaces. 

\begin{theorem}\label{thm:abstract-derivative}
    Let \(E,F\) and \(G\) be Banach spaces and \(U\subset E\times F\) an open subset of \(E\times F\). Let \(f:U \subset E\times F \to G\) be \(C^{k}\) in the Fréchet sense for \(k \geq 1\). Let \(V\subset E\) an open subset of \(E\) and assume that there exists a function \(u:V \to F\) such that for all \(x \in V\),
    \begin{enumerate}
        \item \((x,u(x)) \in U\),
        \item \(f(x, u(x)) = 0\),
        \item \(D_2f(x,u(x)):F\to G\) is a linear homeomorphism.
    \end{enumerate}
    Additionally, assume that either \(u\) is continuous, or that for all \(x \in V\), the only solution \((x,v)\in U\) to the equation \(f(x,v)=0\) is \((x,u(x))\). Then, \(u\) is \(C^{k}\) in the Fréchet sense and its derivative \(D u(x) \in \mathcal{L}(E,F)\) is given by
    \begin{equation}\label{eq:ift-first-derivative}
        D u(x) [h] = - (D_2f(x,u(x)))^{-1} D_1 f(x,u(x)) [h]
    \end{equation} for any \(x \in V\) and \(h\in E\).
    
    For \(k \geq 2\), the second derivative \(D^2u(x) \in \mathcal{L}(E\times E, F)\) is given by
    \begin{equation}\label{eq:ift-second-derivative}
    \begin{aligned}
        D^2u(x) [h_1,h_2] = - (D_2f(x,u(x)))^{-1} \Big( D^{2}_{1,1}f(x,u(x)) &[ h_1,h_2 ] \\
        + D^{2}_{1,2}f(x,u(x)) &[ h_1,Du(x) [ h_2 ]] \\
        + D^{2}_{2,1}f(x,u(x)) &\cdot \left( Du(x) [ h_1 ], h_2 \right) \\
        + D^{2}_{2,2}f(x,u(x)) &\cdot \left( Du(x) [ h_1 ], Du(x) [ h_2 ] \right) \Big)
    \end{aligned}
    \end{equation} for any \(x \in V\) and \(h_1, h_2\in E\).

    For larger \(k\geq2\), the general formula is 
    \begin{equation}\label{eq:Faa-di-Bruno}
            D^ku(x) [h_1,\ldots,h_k] = - (D_2f(x,u(x)))^{-1} \sum_{\substack{\pi \in \Pi_k \backslash\{\pi_0\} \\ \pi \eqqcolon \left\{ P_1,\ldots, P_m \right\}  }} D^{\left| \pi \right|}f(x,u(x)) [ H_{P_{1}}, \ldots, H_{P_m} ] 
    \end{equation} where \(\Pi_k\) is the set of all partitions of \(\left\{ 1,\ldots,k \right\} \), \(\pi_0 \coloneqq
     \left\{ \left\{ 1,\ldots,k \right\} \right\} \) and
     \[
     H_{P_j} \coloneqq 
     \begin{cases}
        \left(h_i, D u(x) [h_i]\right) & \text { if }\left|P_j\right|=1 \text { and } P_j=\{i\}, \\ 
        \left(0, D^{\left|P_j\right|} u(x) [\left(h_i\right)_{i \in P_j}]\right) & \text { if }\left|P_j\right| \geq 2.
    \end{cases}
     \] 
\end{theorem}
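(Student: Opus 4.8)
The plan is to reduce everything to the classical implicit function theorem in Banach spaces (e.g.\ \cite[Theorem 10.2.1]{dieudonne}), applied locally near each point $x_0 \in V$, and then to propagate regularity and compute derivatives by differentiating the identity $f(x,u(x))=0$. First I would fix $x_0 \in V$ and set $y_0 = u(x_0)$. By hypotheses (1)--(3), $f$ is $C^k$ near $(x_0,y_0)$, $f(x_0,y_0)=0$, and $D_2 f(x_0,y_0)$ is a linear homeomorphism $F \to G$; the classical implicit function theorem then produces open neighbourhoods $V_0 \ni x_0$ in $E$ and $W_0 \ni y_0$ in $F$, and a unique $C^k$ map $\psi : V_0 \to W_0$ with $\psi(x_0)=y_0$ and $f(x,\psi(x))=0$ for all $x \in V_0$, whose derivative is exactly \eqref{eq:ift-first-derivative} (with $\psi$ in place of $u$). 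The crux is then to show $\psi = u$ on a neighbourhood of $x_0$, so that $u$ inherits the $C^k$ regularity and derivative formula locally, and hence globally since $x_0$ was arbitrary. Under the uniqueness alternative this is immediate: $(x,u(x)) \in U$ solves $f(x,\cdot)=0$, so it must coincide with $(x,\psi(x))$ for $x \in V_0$. Under the continuity alternative, one argues that $u^{-1}(W_0)$ is an open neighbourhood of $x_0$ on which, again by the local uniqueness clause of the implicit function theorem (uniqueness of the solution \emph{within} $W_0$), $u$ and $\psi$ agree. Either way, $u$ is $C^k$ near $x_0$ with $Du(x_0)$ given by \eqref{eq:ift-first-derivative}.

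With $u \in C^k$ and the first-derivative formula in hand, the higher-derivative formulae \eqref{eq:ift-second-derivative} and \eqref{eq:Faa-di-Bruno} follow by repeated differentiation of the identity $D_1 f(x,u(x)) + D_2 f(x,u(x)) \circ Du(x) = 0$, which is just \eqref{eq:ift-first-derivative} rearranged. For the second derivative I would differentiate this identity once more in $x$, using the chain rule and the product rule for the composition $x \mapsto D_2 f(x,u(x)) \circ Du(x)$; the derivative of $x \mapsto D_2 f(x,u(x))$ in a direction $h_1$ is $D^2_{2,1}f(x,u(x))[\,\cdot\,,h_1] + D^2_{2,2}f(x,u(x))[\,\cdot\,,Du(x)[h_1]]$ by the chain rule, while the derivative of $x \mapsto D_1 f(x,u(x))$ in direction $h_1$ is $D^2_{1,1}f[\,\cdot\,,h_1] + D^2_{1,2}f[\,\cdot\,,Du(x)[h_1]]$. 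Collecting terms and applying $(D_2f)^{-1}$ (legitimate since $D^2u$ applied to $h_1,h_2$ is $D(Du)$, a bounded bilinear map because $Du$ is $C^{k-1}$ with $k \ge 2$) yields \eqref{eq:ift-second-derivative}. The general case \eqref{eq:Faa-di-Bruno} is the Faà di Bruno formula for the $k$-th derivative of the composite map $x \mapsto f(x,u(x)) = f \circ (\mathrm{id},u)(x)$: the partitions $\pi$ index the ways the $k$ directions $h_1,\dots,h_k$ are grouped and fed into the derivatives of the inner map $(\mathrm{id},u)$, whose $j$-th derivative in directions $(h_i)_{i \in P_j}$ is $(h_i, Du(x)[h_i])$ when $|P_j|=1$ and $(0, D^{|P_j|}u(x)[(h_i)_{i\in P_j}])$ when $|P_j|\ge 2$; the partition $\pi_0$ is excluded precisely because the left-hand side $D^k(f\circ(\mathrm{id},u)) \equiv 0$ and isolating the $\pi_0$ term (which is $D_2 f(x,u(x))[D^k u(x)[h_1,\dots,h_k]]$) and solving for $D^k u$ gives \eqref{eq:Faa-di-Bruno}. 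One proves this by induction on $k$, differentiating \eqref{eq:Faa-di-Bruno} at level $k-1$ and using the recursion on set partitions.

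The main obstacle I anticipate is the bookkeeping in the continuity alternative — specifically, checking carefully that $u$ and the locally-defined $\psi$ coincide using only \emph{local} uniqueness (uniqueness within the small neighbourhood $W_0$ supplied by the implicit function theorem) rather than the global uniqueness assumed in the other alternative. The point is that continuity of $u$ guarantees $u(x) \in W_0$ for $x$ near $x_0$, which places $u(x)$ in the regime where the implicit function theorem's uniqueness applies; this has to be stated precisely to avoid circularity. The algebra in \eqref{eq:ift-second-derivative} and \eqref{eq:Faa-di-Bruno} is routine multilinear calculus, but I would present \eqref{eq:Faa-di-Bruno} cleanly via the Faà di Bruno formula for Fréchet derivatives rather than grinding through the induction in detail, simply noting that it specialises correctly and that the $\pi_0$ term is what one solves for.
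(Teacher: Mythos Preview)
Your proposal is correct and follows essentially the same route as the paper: apply the implicit function theorem locally at each $x_0\in V$, identify the local implicit function with $u$ via either the continuity or the global-uniqueness alternative, and then obtain the derivative formulae by differentiating $f(x,u(x))=0$ and invoking Fa\`a di Bruno on $f\circ(\mathrm{id},u)$, isolating the $\pi_0$ term. The only cosmetic difference is that the paper, citing a version of the implicit function theorem that yields $C^1$ regularity, inserts a short bootstrap (if $u$ is $C^l$ then the right-hand side of \eqref{eq:ift-first-derivative} is $C^l$, hence $u$ is $C^{l+1}$) to reach $C^k$, whereas you invoke a $C^k$ implicit function theorem directly; both are standard.
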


\begin{proof}[Proof of Theorem \ref{thm:abstract-derivative}]
    First, we show that \(u\) is \(C^{1}\). For this, choose an arbitrary \(x_0\in V\). We start by applying the Banach space implicit function theorem, see for example \cite[Theorem (10.2.1)]{dieudonne}. We obtain that there exist a connected open neighbourhood \(U_{x_0}\) of \(x_0\) contained in \(V\) and a unique continuous function \(\tilde{u}:U_{x_0} \to F\) such that for all \(x \in U_{x_0}\), \((x,u(x)) \in U\) and \(f(x,\tilde{u}(x))=0\). Moreover, \(\tilde{u}\) is \(C^{1}\) in \(U_{x_0}\).

    Now, if \(u\) is continuous, then \(u=\tilde{u}\) in \(U_{x_0}\) by the uniqueness part of the implicit function theorem. This implies that \(u\) is \(C^{1}\) in \(U_{x_0}\) and, in particular, \(u\) is differentiable at \(x_0\). Since \(x_0\) was arbitrary, we obtain that \(u\) is differentiable in \(V\). Then, since \(u\) is \(C^{1}\) in \(U_{x_0}\) in particular its derivative is continuous at \(x_0\). Again, \(x_0\) was arbitray so \(Du\) is continuous in \(V\). If instead we assume that \(u(x)\) is the only solution to \(f(x,u(x))=0\) for every \(x\in V\), then we must have that \(u=\tilde{u}\) in \(U_{x_0}\) as well and \(u\) is again \(C^1\) by the preceding reasoning.     

    Now that we know that \(u\) is \(C^{1}\), the first formula \eqref{eq:ift-first-derivative} is directly obtained by differentiating the equation \(f(x,u(x))=0\) with respect to \(x\) using the chain rule. We can then use \eqref{eq:ift-first-derivative} to obtain that \(u\) is \(C^{k}\) by a simple bootstrap argument. Indeed, assume that \(u\) is \(C^{l}\) for some \(l <k\). Since \(f\) is \(C^{k}\), the chain rule gives us that \((D_2f(x,u(x)))^{-1} D_1 f(x,u(x))\) is \(C^{l}\). But then this means that \(Du\) itself is \(C^{l}\), i.e. \(u\) is \(C^{l+1}\). This proves the claim by induction. Once we know that \(u\) is \(C^{k}\), formula \eqref{eq:Faa-di-Bruno} (and therefore formula \eqref{eq:ift-second-derivative} as well) follows from using Faà di Bruno's multivariate formula on \(D^{k} (f \circ g) (x) = 0\) where we set \(g(x)=(x,u(x))\), and isolating the term involving the paritition \(\pi_0\).
\end{proof}

\begin{lemma}\label{lem:mult-is-smooth}
    Let \(f: E_1 \times \dots \times E_m \to G\) be a continuous multilinear map, i.e. \(f\) is \(m\)-linear and continuous. Then, \(f\) is \(C^{\infty}\) in the Fréchet sense.
\end{lemma}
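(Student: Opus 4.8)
The plan is to proceed by induction on the arity $m$, using the standard fact that a map between Banach spaces is $C^\infty$ if and only if it is Fréchet differentiable and its derivative is itself $C^\infty$ (the same bootstrap used in the proof of Theorem \ref{thm:abstract-derivative}). First I would dispose of the base case $m=1$: a continuous linear map $f:E_1 \to G$ satisfies $Df(x)=f$ for every $x$, so $Df$ is a constant map $E_1 \to \mathcal{L}(E_1,G)$ and hence $C^\infty$; therefore $f \in C^\infty$. (One could equally start from $m=0$, where $f$ is literally a constant.)

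For general $m$, the key step is to compute $Df$ explicitly and identify its structure. Expanding $f(x_1+h_1,\dots,x_m+h_m)$ by multilinearity into $2^m$ terms indexed by subsets $S \subseteq \{1,\dots,m\}$ (placing $h_i$ in slot $i$ when $i \in S$ and $x_i$ otherwise), the term $S=\emptyset$ is $f(x_1,\dots,x_m)$, the terms with $|S|=1$ assemble into the candidate linear map
\[
L[h_1,\dots,h_m] := \sum_{i=1}^{m} f(x_1,\dots,x_{i-1},h_i,x_{i+1},\dots,x_m),
\]
and each remaining term, having $|S|\ge 2$, is bounded in $G$ by $\|f\|\big(\prod_{j\notin S}\|x_j\|\big)\big(\prod_{i\in S}\|h_i\|\big)$, which is $o(\|(h_1,\dots,h_m)\|)$ as $(h_1,\dots,h_m)\to 0$, since the product norm dominates each coordinate norm and at least two factors $\|h_i\|$ appear. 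Continuity of $f$ gives $\|f\|<\infty$, so $f$ is Fréchet differentiable with $Df(x_1,\dots,x_m)=L$.

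Next I would observe that $Df=\sum_{i=1}^{m} g_i\circ\pi_i$, where $\pi_i:E_1\times\dots\times E_m\to\prod_{j\neq i}E_j$ is the continuous linear coordinate projection and
\[
g_i:\prod_{j\neq i}E_j \to \mathcal{L}(E_1\times\dots\times E_m,G),\qquad
g_i\big((x_j)_{j\neq i}\big):=\big[(h_1,\dots,h_m)\mapsto f(x_1,\dots,x_{i-1},h_i,x_{i+1},\dots,x_m)\big].
\]
The estimate $\|g_i((x_j)_{j\neq i})\|_{\mathcal{L}}\le \|f\|\prod_{j\neq i}\|x_j\|$ (again using that the product norm dominates $\|h_i\|$) shows each $g_i$ is a continuous $(m-1)$-linear map, hence $C^\infty$ by the induction hypothesis; since $\pi_i$ is $C^\infty$ and finite sums and compositions of $C^\infty$ maps are $C^\infty$ (linearity of the derivative and the chain rule, e.g.\ \cite[Theorem 8.2.1]{dieudonne}), $Df$ is $C^\infty$, and therefore so is $f$. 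This closes the induction.

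The only mildly delicate points are the $o(\|h\|)$ estimate that establishes differentiability — a bookkeeping exercise with the multilinear expansion and the product norm — and, in the inductive step, keeping track of which variables each summand of $Df$ genuinely depends on so that the induction hypothesis applies to an honest $(m-1)$-linear map; neither constitutes a real obstacle.
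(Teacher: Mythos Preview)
Your proof is correct and follows the same approach as the paper: induction on the arity $m$, with the differentiability of a continuous $m$-linear map and the explicit form of its derivative taken as the inductive engine. The paper simply cites Dieudonn\'e (8.1.4) for this computation and says ``proceed by induction,'' whereas you have written out the multilinear expansion, the $o(\|h\|)$ remainder estimate, and the factorisation $Df = \sum_i g_i \circ \pi_i$ explicitly; these are exactly the details behind that citation.
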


\begin{proof}[Proof of Lemma \ref{lem:mult-is-smooth}]
    See (8.1.4) in \cite[Chapter VIII]{dieudonne}, and proceed by induction. 
\end{proof}

\subsection{Remaining proofs}\label{appendix:further-technical-results}

\begin{proof}[Proof of Proposition \ref{prop:derivatives-f}]
    We first prove that \(f\) is \(C^{\infty}\). A continuous multilinear map is \(C^{\infty}\) in the Fréchet sense, see Lemma \ref{lem:mult-is-smooth}. Consider the auxiliary maps \(g:F \times E \times F \to G\) and \(h: F \to G\)  defined by
    \[
    h(\rho) \coloneq \left( \partial_t \rho  - \Delta \rho, \rho(0) \right), \quad g(\rho,W,u) \coloneqq \left( - \nabla \cdot \left( \rho  \nabla W \ast u  \right) , 0  \right) = \left(- \mathcal{T}(\rho,W,u),0 \right).
    \] 
    By definition, \(h\) is linear. We show that it is continuous, i.e. bounded:
    \[
    \left\| h(\rho ) \right\|_{G} \leq \left\| \left( \partial_t - \Delta \right) \rho \right\|_{L^{2}_{T}H^{\beta-1}} + \left\| \rho(0) \right\|_{H^{\beta}} \leq \left\| \rho \right\|_{H^{1}_{T}H^{\beta-1}} + \left\| \rho \right\|_{L^{2}_{T}H^{\beta+1}} + C \big( \left\| \rho \right\|_{H^{1}_{T}H^{\beta-1}} + \left\| \rho \right\|_{L^{2}_{T}H^{\beta+1}} \big)  
    \] where we used \eqref{eq:trace-theorem-time}. This proves that \(\left\| h(\rho) \right\|_{G} \lesssim \left\| \rho \right\|_{F} \) and therefore \(h\) is $C^\infty$. Then, \(g\) is trilinear by definition and continuous by Lemma \ref{lem:trilin-bounded}, hence also $C^\infty$. Finally, \(f\) decomposes as
    \begin{equation}\label{eq:f-decomposition}
        f(W,\rho) = h(\rho) + g(\rho,W,\rho) + (0,-\phi)
    \end{equation} 
    which shows that \(f\) is smooth as a sum of smooth maps.

    The expressions for the derivatives of \(f\) follow as well from the decomposition \eqref{eq:f-decomposition}. From the chain rule the first derivative is 
    \[
    Df(W,\rho) [H,s] = Dh (\rho)[s] + Dg(\rho,W,\rho) [s,H,s] + 0 
    \]
    because the maps \((W,\rho) \mapsto \rho\) and \((W,\rho) \mapsto (\rho,W,\rho)\) are continuous linear and hence their own derivatives. Now, the rules of differentiation for continuous multilinear maps give:
    \[
    Df(W,\rho)[H,s] = h(s) + g(s,W,\rho) + g(\rho,H,\rho) + g(\rho,W,s)
    \] and hence 
    \begin{equation}\label{eq:first-derivatives}
        D_1f (W,\rho) [H] = g(\rho,H,\rho), \quad D_2f(W,\rho) [s] = h(s) + g(s,W,\rho) + g(\rho,W,s)
    \end{equation} which is exactly the expression given in the proposition. 

    For the second derivatives we differentiate the expressions in \eqref{eq:first-derivatives} with respect to $W$ and $\rho$. Since $D_1f$ does not depend on $W$ we see that $D_{1,1}^2f =0$, and similar to above 
    \[D_{2,1}^{2}f(W,\rho) [s,H] = g(\rho,H,s) + g(s,H,\rho) = D_{1,2}^{2}f(W,\rho) [H,s]
    \]
    Also the second equation in \eqref{eq:first-derivatives} is already linear in $\rho$ so we obtain 
       \[
    D_{2,2}^{2}f(W,\rho) [s_1,s_2] = g(s_1,W,s_2) + g(s_2,W,s_1),
    \] 
   as required.
\end{proof}

\begin{proof}[Proof of Lemma \ref{lem:bound-on-derivatives-of-forward-map}]  Recall the definition \(\mathcal{T}(r,V,s) \coloneqq \nabla \cdot \left( r \nabla V \ast s \right)\) from \eqref{eq:def-Tcal}.
    Throughout the proof, we will use the notation \(\left\| \ \cdot \  \right\|_{\gamma} \coloneqq  \left\| \ \cdot \  \right\|_{L^{2}([0,T];H^{\gamma+1})\cap H^{1}([0,T],H^{\gamma-1})}\) 
    We proceed by induction. The case \(k=1\) follows directly from the regularity estimate in Theorem \ref{thm:linPDE-higher-regularity} combined with Lemma \ref{lem:trilin-op-T-bound} equation \eqref{eq:trilin-algebra} with \(\nu=0\) and \eqref{eq:trace-theorem-time}:
    \[
    \left\| D \rho_W [h_1] \right\|_{\gamma} \lesssim \left\| \mathcal{T}(\rho_W,h_1,\rho_W) \right\|_{L^{2}([0,T],H^{\gamma-1})} \lesssim \left\| h_1 \right\|_{L^{2}} \left\| \rho_W \right\|_{\gamma}^{2}.
    \]
    Since \(\left\| \rho_W \right\|_{\gamma} \leq C(d,\gamma,T,M,\left\| \phi \right\|_{H^{\gamma}})\), this proves the base case.
    
    Now, let \(k \geq 2\) and assume the lemma holds for all \(l < k\). Formula \eqref{eq:Faa-di-Bruno} allows us to express \(D^{k}\rho_W\) in terms of the lower order terms. Namely, \(D^{k}\rho_W  [H_1, \dots, H_k]\) solves \eqref{eq:linPDE} with right-hand side forcing term \(F\) and initial condition \(\psi\) given by
    \[
    (F,\psi) \coloneqq - \sum_{\substack{\pi \in \Pi_k \backslash\{\pi_0\} \\ \pi \eqqcolon \left\{ P_1,\ldots, P_m \right\}  }} D^{\left| \pi \right|}f(W,\rho_W) [H_{P_{1}}, \ldots, H_{P_m}]
    \] 
    where we recall that \(\Pi_k\) is the set of all partitions of \(\left\{ 1,\ldots,k \right\} \), \(\pi_0 \coloneqq
     \left\{ \left\{ 1,\ldots,k \right\} \right\} \) and
     \[
     H_{P_j} \coloneqq 
     \begin{cases}
        \left(h_i, D \rho_W [h_i] \right) & \text { if }\left|P_j\right|=1 \text { and } P_j=\{i\}, \\ 
        \left(0, D^{\left|P_j\right|} \rho_W \cdot [h_i]_{i \in P_j}\right) & \text { if }\left|P_j\right| \geq 2 .
    \end{cases}
     \] 
    In particular, we note that the terms in the sum involve derivatives of \(f\) of order at least \(2\) since \(\pi \neq \pi_0\).  
    Proposition \ref{prop:derivatives-f} then shows that \(D^{2}_{1,2}f\) is constant in \(W\) and linear in \(\rho\), and \(D^{2}_{2,2}f\) is constant in \(\rho\) and linear in \(W\). Therefore, we see that \(D^{4}f = 0, D^{3}_{1,1,1}f = D^{3}_{1,1,2}f = D^{3}_{2,2,2}f = 0\), and 
    \[
    D^{3}_{1,2,2}f(W,\rho) [h,s_1,s_2] = - \left( \mathcal{T}(s_1,h,s_2) + \mathcal{T}(s_2,h,s_1), 0 \right).
    \] In other words, \(\psi =0\) and \(F\) is the sum of all the terms 
    \[
    \mathcal{T}(D^{\left| P_2 \right| }\rho_W [h_i]_{i \in P_2},h_j,D^{\left| P_3 \right| }\rho_W [h_i]_{i \in P_3}) + \mathcal{T}(D^{\left| P_3 \right| }\rho_W \cdot [h_i]_{i \in P_3},h_j,D^{\left| P_2 \right| }\rho_W \cdot [h_i]_{i \in P_2})
    \] for all partitions of size \(3\) and all \(j=1,\ldots ,k\) such that \(P_1=\{j\}\) (terms coming from \(D^{3}_{1,2,2}f\)),
    \[
    \mathcal{T}\left( D^{k-1}\rho_W [h_i]_{i \neq j}, h_j, \rho_W  \right) + \mathcal{T}\left(\rho_W, h_j, D^{k-1}\rho_W [h_i]_{i \neq j} \right) 
    \] for all \(j=1,\ldots ,k\) (terms coming from \(D^{2}_{1,2}f\)), and,
    \[
    \mathcal{T}(D^{\left| P_1 \right| }\rho_W [h_i]_{i \in P_1},W,D^{\left| P_2 \right| }\rho_W [h_i]_{i \in P_2}) + \mathcal{T}(D^{\left| P_2 \right| }\rho_W [h_i]_{i \in P_2},W,D^{\left| P_1 \right| }\rho_W [h_i]_{i \in P_1})
    \] for all partitions of size \(2\) (terms coming from \(D^{2}_{2,2}f\)).
    
    We can now use Theorem \ref{thm:linPDE-higher-regularity} to obtain that
    \[
    \left\| D^{k}\rho_W [h_1, \dots, h_k] \right\|_{L^{2}([0,T];H^{\gamma+1})\cap H^{1}([0,T],H^{\gamma-1})} \leq C \left\| F \right\|_{L^{2}([0,T],H^{\gamma-1})}
    \] with \(C=C(d,\gamma,T,M, \left\| \phi \right\|_{H^{\gamma }})\). To finish the induction step and conclude the proof, we use the triangle inequality to split \(F\) into all of its terms. We control each term using Lemma \ref{lem:trilin-op-T-bound} equation \eqref{eq:trilin-algebra} with \(\nu=0\), combined with \eqref{eq:trace-theorem-time}. 

    Terms coming from \(D^{3}_{1,2,2}f\): 
\begin{align*}
&\left\| \mathcal{T}\bigl(D^{|P_2|}\rho_W [h_i]_{i \in P_2},\, h_j,\,
   D^{|P_3|}\rho_W [h_i]_{i \in P_3}\bigr) \right\|_{L^{2}([0,T],H^{\gamma-1})} \\
&\hspace{9em} \lesssim \left\|h_j\right\|_{L^{2}} 
   \sup_{0\leq t \leq T}\left\|D^{|P_2|}\rho_W [h_i]_{i \in P_2}\right\|_{H^{\gamma}}
   \left\|D^{|P_3|}\rho_W [h_i]_{i \in P_3}\right\|_{L^{2}([0,T];H^{\gamma+1})} \\
&\hspace{9em} \lesssim \left\|h_j\right\|_{L^{2}}
   \left\|D^{|P_2|}\rho_W [h_i]_{i \in P_2}\right\|_{\gamma}
   \left\|D^{|P_3|}\rho_W [h_i]_{i \in P_3}\right\|_{\gamma} \\
&\hspace{9em} \leq C \left\| h_1 \right\|_{L^{2}} \ldots \left\| h_k \right\|_{L^{2}}
\end{align*} where we used the induction hypothesis in the last inequality. The other terms are bounded analogously. 

Terms coming from \(D^{2}_{1,2}f\):
\begin{align*}
&\left\| \mathcal{T}\left( D^{k-1}\rho_W [h_i]_{i \neq j}, h_j, \rho_W  \right) \right\|_{L^{2}([0,T],H^{\gamma-1})}  \lesssim \left\|h_j\right\|_{L^{2}} 
   \sup_{0\leq t \leq T}\left\|D^{k-1}\rho_W [h_i]_{i \neq j}\right\|_{H^{\gamma}}
   \left\|\rho_W\right\|_{L^{2}([0,T];H^{\gamma+1})} \\
&\hspace{6em} \lesssim \left\|h_j\right\|_{L^{2}} 
   \left\|D^{k-1}\rho_W [h_i]_{i \neq j}\right\|_{\gamma} \leq C \left\| h_1 \right\|_{L^{2}} \ldots \left\| h_k \right\|_{L^{2}}
\end{align*} where we used \cite[Theorem 5]{mckv} in the second inequality, and the induction hypothesis in the last one. The other terms are bounded analogously.

Terms coming from \(D^{2}_{2,2}f\):
\begin{align*}
&\left\| \mathcal{T}(D^{\left| P_1 \right| }\rho_W [h_i]_{i \in P_1},W,D^{\left| P_2 \right| }\rho_W [h_i]_{i \in P_2}) \right\|_{L^{2}([0,T],H^{\gamma-1})} \\
&\hspace{9em} \lesssim \left\|W\right\|_{L^{2}} 
   \sup_{0\leq t \leq T}\left\|D^{\left| P_1 \right| }\rho_W [h_i]_{i \in P_1}\right\|_{H^{\gamma}}
   \left\|D^{\left| P_2 \right| }\rho_W [h_i]_{i \in P_2}\right\|_{L^{2}([0,T];H^{\gamma+1})} \\
&\hspace{9em} \lesssim \left\|W\right\|_{W^{2,\infty}} 
   \left\|D^{\left| P_1 \right| }\rho_W [h_i]_{i \in P_1}\right\|_{\gamma}
   \left\|D^{\left| P_2 \right| }\rho_W [h_i]_{i \in P_2}\right\|_{\gamma} \\
&\hspace{9em} \leq C \left\| h_1 \right\|_{L^{2}} \ldots \left\| h_k \right\|_{L^{2}}
\end{align*} where we used the boundedness of the domain to write \(\left\| W \right\|_{L^{2}} \leq \left\| W \right\|_{W^{2,\infty}}\), and the induction hypothesis in the last inequality. The other terms are bounded analogously.

\end{proof}

\textbf{Acknowledgements.} The authors  acknowledge funding from an ERC Advanced Grant (UKRI G116786) as well as by EPSRC programme grant EP/V026259. RN would like to thank Jan Bohr for pointing out the reference \cite{AG07}, and both authors thank Greg Pavliotis for multiple discussions.

\bibliographystyle{plain}
\bibliography{refs}

\textsc{Department of Pure Mathematics \& Mathematical Statistics}

\textsc{University of Cambridge}, Cambridge, UK

Email: nickl@maths.cam.ac.uk, afc51@cam.ac.uk

\end{document}